\newtheorem{theorem}{Theorem}[section]
\newtheorem{lemma}[theorem]{Lemma}
\newtheorem{corollary}[theorem]{Corollary}
\newtheorem{proposition}[theorem]{Proposition}
\newtheorem{stheorem}{Theorem}[subsection]
\newtheorem{slemma}[stheorem]{Lemma}
\newtheorem{sproposition}[stheorem]{Proposition}
\newtheorem{snotation}[stheorem]{Notation}
\theoremstyle{definition}
\newtheorem{itheorem}{Theorem}
\newtheorem{idefinition}[itheorem]{Definition}
\newtheorem{notation}[theorem]{Notation}
\newtheorem{inotation}[itheorem]{Notation}
\newtheorem{conjecture}[theorem]{Conjecture} 
\newtheorem{problem}[theorem]{Problem}
\newtheorem{remark}[theorem]{Remark}
\def\ord{\operatorname{ord}}
\def\C{\mathbb{C}}
\def\R{\mathbb{R}}
\def\Z{\mathbb{Z}}
\def\Q{\mathbb{Q}}
\def\N{\mathbb{N}}
\def\beq {\begin{equation}}
\def\endq {\end{equation}}
\newcommand{\Ocal}{\mathcal{O}}
\begin{document}
\title{Prime power terms in elliptic divisibility sequences.}
\begin{abstract}

We consider the problem of finding explicitly all prime power terms in an elliptic divisibility sequence when descent via isogeny is possible. This question is an analog for elliptic curves to the Mersenne problem.
\end{abstract}
\subjclass{11G05, 11A41}
\keywords{Siegel's Theorem, elliptic curves, isogeny, division polynomials, Thue equations, canonical height, local height}
\author{Val\'ery Mah\'e}
\address{I3M, Universit\'e de Montpellier 2; Case Courrier 051, Place Eug\`ene Bataillon; 34095 Montpellier Cedex; France.}
\email{vmahe@math.univ-montp2.fr}
\thanks{This work began at the university of East Anglia (and was funded by an EPSRC grant)  and has been finished at the university of Montpellier 2. I thank Graham Everest for helpful discussions and comments.}

\maketitle

\section{Introduction}

The Mersenne Problem consists in the search for all prime integers of the form $2^{n}-1$ i.e. in the study of integers $n\in\N$ such that the congruence $2^{n}\equiv 1\bmod l$
is satisfied for at most one prime integer $l$. This open problem corresponds to the particular case $G=\mathbb{G}_{m}$ of the following question  : 
\begin{problem}
For any $\Q$-point $P\in G(\Q )$ of an algebraic group $G$ defined over $\Q$ (or over the fraction field of a Dedekind ring), describe the set $\mathcal{I}(P)$ of integers $n$ such that
$$\textrm{Card}\left(\left\{ v\textrm{ place of }\Q~: \textrm{red}_{v}(nP) = \textrm{red}_{v}( 0_{G})\right\}\right)\le 1$$
(where $\textrm{red}_{v}$ denotes the reduction map of $G$ at $v$ and $0_{G}$ denotes the neutral element of $G$). 
\end{problem}
The properties of the set $\mathcal{I}(P)$ depend strongly on the choice for the algebraic group $G$. In fact while the existence of infinitely many Mersenne primes is expected, an analog to the Lenstra-Wagstaff heuristic is considered in \cite{Einsiedler-Everest-Ward}  to suggest the following conjecture is true:

\begin{conjecture}[Primality conjecture] Let $P$ be a point on an elliptic curve $E$ defined over $\Q$ by a Weierstrass equation with integer coefficients. Then the cardinal of the set $\mathcal{I}(P)$ is finite.
\end{conjecture}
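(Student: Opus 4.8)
We describe a plan of attack for the conjecture and the point at which it currently stalls.

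We first recast the conjecture in terms of the denominator sequence of $P$. One may assume $P$ non-torsion, the torsion case being a finite verification. Write $x(nP)=A_n/D_n^{2}$ in lowest terms with $D_n\ge 1$. For a finite place $v$ one has $\mathrm{red}_v(nP)=\mathrm{red}_v(0_E)$ exactly when $v(D_n)>0$ (the point at infinity is a smooth point of the Weierstrass model at every place, including primes of bad reduction), and no archimedean place lies in the set under consideration; hence
\[
\mathcal{I}(P)=\{\, n\ge 1 : D_n \text{ has at most one prime factor}\,\}.
\]
Up to sign $(D_n)$ is the elliptic divisibility sequence attached to $P$, so the divisibility relations and the Siegel-type input available for such sequences apply. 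The conjecture becomes the assertion that $D_n$ is a prime power for only finitely many $n$.

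The plan is to reduce to \emph{prime} index. The key ingredient is the primitive-divisor theorem for elliptic divisibility sequences (established by Silverman, with effective refinements due to Ingram--Silverman and others): there is $N_0=N_0(E,P)$ such that for all $n>N_0$ the term $D_n$ has a primitive prime divisor, i.e.\ a prime $q$ dividing $D_n$ but no earlier $D_m$. In particular $D_n>1$ for $n>N_0$. Moreover, if $n$ has two distinct divisors $m_1<m_2$ with $m_1,m_2>N_0$, the associated primitive primes $q_i\mid D_{m_i}$ have rank of apparition exactly $m_i$, so $q_1\neq q_2$; since $D_{m_i}\mid D_n$, the term $D_n$ then has at least two prime factors and $n\notin\mathcal{I}(P)$. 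Hence any $n\in\mathcal{I}(P)$ with $n>N_0$ has $n$ as its only divisor exceeding $N_0$; equivalently every proper divisor of $n$ is $\le N_0$, and since a composite $n$ has a proper divisor $\ge\sqrt n$, this forces $n\le N_0^{2}$. Therefore $\mathcal{I}(P)$ is finite if and only if $D_\ell$ is a prime power for only finitely many primes $\ell$. A further reduction removes non-integral $P$: if a prime $p_0$ divides $D_1$ then $p_0\mid D_1\mid D_\ell$ for every $\ell$, while a primitive divisor of $D_\ell$ (of rank of apparition $\ell\neq 1$) supplies a second prime factor — so one may assume $P$ integral on the given model, in which case \emph{every} prime dividing $D_\ell$ is primitive.

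It then suffices to prove that, for all but finitely many primes $\ell$, the term $D_\ell$ has at least two distinct primitive prime divisors. When $(E,P)$ admits a rational isogeny $\varphi\colon E\to E'$ of prime degree this can be carried out effectively: one factors $D_\ell$ according to $\varphi$ into a factor governed by the denominator sequence of $\varphi(P)$ and a complementary factor, bounds the relevant $S$-integral points via Siegel's theorem, the division polynomials and the associated Thue equations, and uses lower bounds for canonical and local heights (equivalently, for linear forms in elliptic logarithms) to force both factors to grow beyond $1$ once $\ell$ is large. This is exactly the strategy of the present paper, and it yields the conjecture — indeed an explicit determination of $\mathcal{I}(P)$ — in that setting.

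The main obstacle is the prime-index case in the \emph{absence} of such descent structure, where one must show directly that $D_\ell$ is not a single prime power for all large primes $\ell$. The canonical-height estimate $\log D_\ell=\hat{h}(P)\,\ell^{2}+O(\ell)$ makes $D_\ell$ enormous and underlies the heuristic behind the conjecture — the chance that $D_\ell$ be prime being of order $\ell^{-2}$, summable over $\ell$ — yet it gives no control on the number of prime factors. A ``two primitive divisors'' theorem for elliptic divisibility sequences would suffice; short of that, the $abc$ conjecture yields $\operatorname{rad}(D_\ell)\gg_{\varepsilon}D_\ell^{1-\varepsilon}$, which already excludes $D_\ell=p^{k}$ with $k\ge 2$ and reduces the problem to the primality of $D_\ell$, for which only the density heuristic is known. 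Bridging this final gap for arbitrary $(E,P)$ appears to require arithmetic input beyond the present reach of division polynomials and linear forms in logarithms, which is why the statement stands as a conjecture rather than a theorem.
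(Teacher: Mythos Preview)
The statement is a \emph{conjecture} in the paper, not a theorem; the paper offers no proof of it in full generality, so there is no ``paper's own proof'' to compare against. Your proposal is likewise not a proof but an outline that explicitly names the point where the argument stalls, and in that respect you and the paper are in agreement: the general case is open.

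Your reduction to prime index via the primitive-divisor theorem is correct and standard, though the paper does not frame matters that way. For the magnified case the paper works directly with a Siegel-type inequality for the archimedean local height (Theorem~\ref{theorem-general-bound-PSD-terms}), the division-polynomial and formal-group control on $v(B_{\sigma(P')})-v(B_{P'})$ (Proposition~\ref{proposition-link-EDS-division-polynomial}, Lemma~\ref{easy-factorization}), and then David's lower bound on linear forms in elliptic logarithms to make everything explicit. Your one-paragraph summary of this is broadly accurate, with one caveat: the paper's hypothesis is that $P=\sigma(P')$ lies in the \emph{image} of a rational isogeny $\sigma\colon E'\to E$, not merely that $E$ admits an outgoing isogeny $\varphi\colon E\to E'$; the distinction matters, since the key divisibility $B_{nP'}\mid B_{nP}$ on which the whole factorization rests requires a rational preimage $P'$ of $P$.

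Your final paragraph correctly locates the obstruction: absent descent structure, nothing is known that rules out $D_\ell$ being prime for infinitely many primes $\ell$, and the density heuristic you invoke is exactly the one the paper attributes to Einsiedler--Everest--Ward. So your write-up is an accurate status report rather than a proof, which is the appropriate response to a statement the paper itself leaves as a conjecture.
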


In \cite{primeds} Everest, Miller and Stephens prove this primality conjecture for \emph{magnified} points (see below for a definition). This particular case of the primality conjecture is also studied in \cite{Eisentraeger-Everest} as part of a further investigation of a result from Poonen about Hilbert's  tenth problem (see \cite{Poonen}). In \cite{eims} the existence of a uniform upper bound $M$ on $\textrm{Card}(\mathcal{I}(P))$ is proven when $P$ is magnified assuming a conjecture from Lang. 

In this article we give an explicit expression for the bound $M$ as a function in the Szpiro ratio of the underlying elliptic curve $E$. The computation of such a numerical value for $M$ is crucial when considering the problem of sieving for all elements in $\mathcal{I}(P)$. Using the same method we improve also the results proven in \cite{eims} by showing the existence of a uniform bound on $\max\left(\mathcal{I}(P)\right)$ when $P$ is magnified assuming  a conjecture from Lang and a conjecture from Hall and Lang.

\subsection{Background}

A divisibility sequence is a sequence of integers $\left( B_{n}\right)_{n\in\N}$ satisfying the divisibility relation $B_{n}\mid B_{m}$ for every couple $(n,m)\in\N^{2}$ such that $n\mid m$. In \cite{Ward}, Ward study a particular case of divisibility sequences related to the theory of elliptic curves (and division polynomials). 

\begin{inotation}\label{definition-A-P-B-P}
We consider the multiplication-by-$n$ map (denoted $[n]$) on an elliptic curve $E$ defined over $\Q$ by a Weierstrass equation with integral coefficients 
\begin{equation}\label{equation-Weierstrass-intro}
E:y^{2}+a_{1}y+a_{3}xy=x^{3}+a_{2}x^{2}+a_{4}x+a_{6}.
\end{equation}
We denote respectively by $h$, $\widehat{h}$, $h_{\infty}$, $\widehat{h_{\infty}},$ $h_{v}$ and $\widehat{h_{v}}$ the naive height, the canonical height, the archimedean height, the canonical archimedean height, the naive local height at a place $v$ and the canonical local height at $v$. Those heights are defined using the same normalizations as in \cite{advance}.

When the equation (\ref{equation-Weierstrass-intro}) is minimal we denote by $\Delta_{E}$ the discriminant of $E$, by $j(E)$ the $j$-invariant of $E$ and by $h(E)$ the height of $E$ defined as
$$h(E):=\frac{1}{12}\max\left\{ h(j(E)),h(\Delta_{E})\right\} .$$
Let $P$ be a $\Q$-point on $E$ with infinite order. Let $n$ be an integer. We write 
$$[n]P =\left(\frac{A_{nP}}{B_{nP}^{2}},\frac{C_{nP}}{B_{nP}^{3}}\right)$$ 
with $A_{nP}\in\Z$ and $B_{nP}\in\N$ such that $\gcd (A_{nP},B_{nP})=\gcd (C_{nP},B_{nP})=1$. The sequence $(B_{nP})_{n\in\N}$ satisfies the strong divisibility property 
\begin{equation}\label{Strong-Divisibility-Property}
\gcd (B_{nP},B_{mP})=B_{\gcd (n,m)P}.
\end{equation}
\end{inotation}

\begin{idefinition}
We use notation \ref{definition-A-P-B-P}. The sequence $B=(B_{nP})_{n\in\N}$ is called the elliptic divisibility sequence associated to the point $P$. 
\end{idefinition}

This definition is different from the definition given in \cite{Ward}. This slightly different notion of elliptic divisibility sequences appears as a natural tool for the study of the analog of the Mersenne problem for elliptic curves (in fact the set $\mathcal{I}(P)$ is the set of indices of prime power terms in the sequence $B$). In particular we deduce from the  strong divisibility property the existence of natural factorizations of the terms of some elliptic divisibility sequences. As an example when $P=[m]Q$ for some point $Q\in E(\Q )$, Equation (\ref{Strong-Divisibility-Property}) shows that $B_{nQ}$ divides $B_{nP}$ for every integer $n\in\N$. In that case the primality conjecture holds for $P$ if $B_{nQ}$ has a prime factor and $B_{nP}$ has a prime factor coprime to $B_{nQ}$ for all but a finite number of indices $n$. This example can be generalized using the concept of Galois-magnification

\begin{idefinition} We use notation \ref{definition-A-P-B-P}. 
\begin{enumerate}
\item The point $P$ is said Galois-magnified (by Q and $\sigma$) if $P$ can be written as $P=\sigma (Q)$ with 
\begin{itemize}
\item $K_{P,\sigma}$ a finite Galois extension of $\Q$,
\item $F$ an elliptic curve defined over $K_{P,\sigma}$, 
\item $Q$ a point on $F$ defined over $K_{P,\sigma}$,
\item $\sigma :F\longrightarrow E$ an isogeny of degree strictly less than $[K_{P,\sigma}:\Q]$.
\end{itemize}
\item The point $P$ is said magnified if it is Galois-magnified with $K_{P,\sigma}=\Q$.
\item An elliptic divisibility sequence $B$ is said magnified (respectively Galois-magnified) if $B$ is associated to a magnified (respectively Galois-magnified) point. 
\end{enumerate}
\end{idefinition}
The key condition in this definition is the inequality $\deg (\sigma )>[K_{P,\sigma}:\Q]$. It is introduced in \cite{primeds} to study the coprimality of prime factors of $B_{nQ}$ and $B_{nP}$ produced applying strong versions of Siegel's theorem on the finiteness of the set of integral points on an elliptic curve.

\subsection{Statement of the results} 

We consider the problem of computing the set $\mathcal{I}(P)$ when $P$ is magnified over $\Q$. We restrict ourself to the case $K_{P,\sigma}=\Q$ to simplify the statements and the proofs of our results. However more general results for Galois-magnified points could be obtained applying analog methods in the number field case. 

\begin{inotation}\label{notation-P-Pprime-sigma}
Let $E,E'$ be two elliptic curves defined over $\Q$ by standardized minimal Weierstrass equations (Equation (\ref{equation-Weierstrass-intro}) is said standardized when $a_{1},a_{3}\in\{ 0,1\}$ and $a_{2}\in\{-1,0,1\}$). Let $\sigma : E'\longrightarrow E$ be an isogeny defined over $\Q$. Denote by $d$ the degree of $\sigma$.

Let $P'\in E'(\Q )$ be a $\Q$-point on $E'$ with infinite order. Denote by $P$ the image $\sigma (P')$.  Let $(B_{nP'})_{n\in\N}$ (respectively $(B_{nP})_{n\in\N}$) be the elliptic divisibility sequence associated to $P'$ (respectively $P$).
\end{inotation}

Our approach to Mersenne's problem for elliptic curve consists in constructing $\mathcal{I}(P)$ using a set of integer points on an elliptic curve and sets of solutions of some Thue equations. In other words we restate the primality conjecture in the magnified case using classical diophantine equations.

\begin{itheorem}\label{Introduction-Theorem-Thue}
We use notation \ref{notation-P-Pprime-sigma}. 
Let $n$ be an integer such that $B_{n\sigma (P')}$ has at most one prime factor coprime to $B_{P'}.$ Then we have :
\begin{itemize}
\item either $nP'$ is an $S(P')$-integer point (where $S(P')$ denotes the set of prime factors of $B_{P'}$), 
\item or there is an integer $r$ and a divisor $d(n)$ of $\deg (\sigma )^{2}\Delta_{E'}^{r}$ such that 
\begin{enumerate}
\item the integer $d(n)$ varies in a finite set: $|d(n)|\le\deg (\sigma )e^{\left(\frac{3}{2}\deg (\sigma ) h(E')\right)}$
\item and $(A_{nP'},B_{nP'}^{2})$ is a solution of the Thue equation 
$$B_{nP'}^{\deg (\sigma )-1}\psi_{\sigma}(nP') =d(n)$$
\end{enumerate}
where $\psi_{\sigma}$ denotes the division polynomial associated to $\sigma$.
\end{itemize}
\end{itheorem}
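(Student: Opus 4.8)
The plan is to convert the isogeny $\sigma$ into a polynomial identity between the coordinates of $nP'$ on $E'$ and those of $nP=\sigma(nP')$ on $E$, to read a factorisation of $B_{nP}$ off that identity, and then to use the hypothesis on the prime factors of $B_{nP}$ to force all of $B_{nP}$ but at most one prime to be accounted for by $B_{P'}$ together with the resultant of the division polynomial of $\sigma$ --- which is exactly the dichotomy in the statement.

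First I would fix a Vélu model of $\sigma$, writing $x\circ\sigma=\phi_\sigma(x)/\psi_\sigma(x)^{2}$ with $\phi_\sigma,\psi_\sigma\in\Z[x]$ coprime and monic, $\deg\phi_\sigma=\deg(\sigma)$ and $2\deg\psi_\sigma=\deg(\sigma)-1$ (I treat $\deg(\sigma)$ odd; the even case is a routine variant). Substituting $x(nP')=A_{nP'}/B_{nP'}^{2}$ and homogenising both sides gives
$$x(nP)=\frac{\Phi_\sigma\!\left(A_{nP'},B_{nP'}^{2}\right)}{B_{nP'}^{2}\,W_{n}^{2}},\qquad W_{n}:=B_{nP'}^{\deg(\sigma)-1}\,\psi_\sigma(nP')\in\Z,$$
where $\Phi_\sigma$ is the degree-$\deg(\sigma)$ binary form attached to $\phi_\sigma$. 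Since $\phi_\sigma,\psi_\sigma$ are monic, $\gcd\!\left(\Phi_\sigma(A_{nP'},B_{nP'}^{2}),B_{nP'}\right)=\gcd(W_{n},B_{nP'})=1$; as $\gcd(A_{nP'},B_{nP'})=\gcd(A_{nP},B_{nP})=1$, this forces $B_{nP}^{2}=B_{nP'}^{2}W_{n}^{2}/g_{n}$, where $g_{n}:=\gcd\!\left(\Phi_\sigma(A_{nP'},B_{nP'}^{2}),W_{n}^{2}\right)$; this $g_{n}$ divides $\operatorname{Res}(\phi_\sigma,\psi_\sigma^{2})$ and, since $g_{n}=(B_{nP'}W_{n}/B_{nP})^{2}$, is a perfect square. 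Taking square roots, $B_{nP}=B_{nP'}W_{n}/\sqrt{g_{n}}$ with $\sqrt{g_{n}}\mid\operatorname{Res}(\phi_\sigma,\psi_\sigma)$.

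Next I would analyse $\operatorname{Res}(\phi_\sigma,\psi_\sigma)$: a direct computation (or the standard resultant identities for kernel polynomials) shows its prime support lies in $\{p:p\mid\deg(\sigma)\Delta_{E'}\}$ and controls the power of $\deg(\sigma)$ it can carry; through the square root above this is what makes the exceptional factor a divisor of $\deg(\sigma)^{2}\Delta_{E'}^{r}$. Since $B_{P'}\mid B_{nP'}$ by the strong divisibility property (\ref{Strong-Divisibility-Property}), the identity $\gcd(W_{n},B_{nP'})=1$ also gives $\gcd(W_{n},B_{P'})=1$. Now the set of primes dividing $B_{nP}$ and coprime to $B_{P'}$ contains, as a \emph{disjoint} union, the primes dividing $B_{nP'}$ but not $B_{P'}\operatorname{Res}(\phi_\sigma,\psi_\sigma)$ together with the primes dividing $W_{n}$ but not $\operatorname{Res}(\phi_\sigma,\psi_\sigma)$ (the latter are automatically coprime to $B_{P'}$). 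The hypothesis forces this union to have at most one element; hence each of the two pieces has at most one element, and they cannot both be nonempty. If the $W_{n}$-piece is empty, every prime factor of $W_{n}$ divides $\deg(\sigma)\Delta_{E'}$, and one sets $d(n):=W_{n}$, so that $(A_{nP'},B_{nP'}^{2})$ solves the (monic, binary) equation $B_{nP'}^{\deg(\sigma)-1}\psi_\sigma(nP')=d(n)$ --- the Thue alternative. If instead the $B_{nP'}$-piece is empty, every prime factor of $B_{nP'}$ divides $B_{P'}\operatorname{Res}(\phi_\sigma,\psi_\sigma)$, so $nP'$ is integral outside $S(P')$ together with the finitely many primes of $\deg(\sigma)\Delta_{E'}$ --- the first alternative.

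It remains to make the Thue alternative quantitative: when $W_{n}$ is supported on $\{p\mid\deg(\sigma)\Delta_{E'}\}$ one must bound $\operatorname{ord}_{p}(W_{n})$ at each such $p$, which I would do through a naive local height estimate for the value of $\psi_\sigma$ at $nP'$ (with the normalisations of \cite{advance}: the naive local height of $nP'$ at a bad place and the archimedean contribution are each $O(\deg(\sigma)h(E'))$, with total constant $\tfrac32$), yielding $|d(n)|\le\deg(\sigma)e^{\frac{3}{2}\deg(\sigma)h(E')}$ and, in particular, that $d(n)$ ranges over a finite set of divisors of $\deg(\sigma)^{2}\Delta_{E'}^{r}$. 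I expect the two quantitative inputs to be the real work --- (i) pinning down $\operatorname{Res}(\phi_\sigma,\psi_\sigma)$ precisely enough (its prime support and the exact power of $\deg(\sigma)$) to land in $\deg(\sigma)^{2}\Delta_{E'}^{r}$ rather than in a vaguer quantity, and (ii) the exponent bound behind $|d(n)|\le\deg(\sigma)e^{\frac{3}{2}\deg(\sigma)h(E')}$, which needs the naive local heights of $nP'$ on $E'$ and of $nP$ on $E$ to be compared through $\sigma$ place by place, with the archimedean term controlled by $h(E')$. By contrast the polynomial identity, the coprimality bookkeeping, and the combinatorial splitting of the prime factors are comparatively soft.
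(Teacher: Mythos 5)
Your reduction to the identity $x(nP)=\Phi_\sigma\!\left(A_{nP'},B_{nP'}^{2}\right)/\bigl(B_{nP'}^{2}W_{n}^{2}\bigr)$ is a workable alternative starting point, but the case analysis you build on it does not prove the theorem as stated. In the branch where the $B_{nP'}$-piece is empty you conclude only that $nP'$ is integral outside $S(P')\cup\{p:\,p\mid\deg(\sigma)\Delta_{E'}\}$, which is strictly weaker than the first alternative of the theorem; and in the scenario where $B_{nP'}$ does have a prime factor coprime to $B_{P'}$ but that prime divides $\operatorname{Res}(\phi_\sigma,\psi_\sigma)$ while the $W_n$-piece is nonempty, your argument establishes neither alternative. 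The paper enters the proof differently: assume $nP'$ is \emph{not} an $S(P')$-integer point, so $B_{nP'}$ has a prime $\ell\nmid B_{P'}$; since $B_{nP'}\mid B_{n\sigma(P')}$ (first half of Lemma \ref{easy-factorization}), $\ell$ is the unique prime factor of $B_{n\sigma(P')}$ coprime to $B_{P'}$, hence every prime of $B_{n\sigma(P')}$ divides $B_{nP'}$, and the second half of Lemma \ref{easy-factorization} gives $B_{n\sigma(P')}\mid\deg(\sigma)B_{nP'}$. This formal-group divisibility is the ingredient missing from your set-up: it is exactly what excludes the problematic scenario above (your formula $B_{nP}=B_{nP'}W_{n}/\sqrt{g_{n}}$ by itself allows $\ell$ to be cancelled into $\sqrt{g_{n}}$), and, combined with Inequality~(\ref{equation-comapraison-EDS-div-pol}) applied to $nP'$, it yields the size bound $\bigl|B_{nP'}^{\deg(\sigma)-1}\psi_\sigma(nP')\bigr|\le\deg(\sigma)e^{3\deg(\sigma)h(E')/2}$ in one line, while the divisor statement comes from Ayad's theorem (Theorem \ref{theoremA-Ayad}) together with $d_\sigma\mid\deg(\sigma)$.

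Two further points. First, the quantitative claims you defer as ``the real work'' --- pinning down the power of $\deg(\sigma)$ carried by the resultant, and bounding $\operatorname{ord}_{p}(W_{n})$ place by place to reach $|d(n)|\le\deg(\sigma)e^{\frac32\deg(\sigma)h(E')}$ --- are precisely where the paper invests its effort (Lemma \ref{Parallelogram-law-version-isogeny}, Proposition \ref{proposition-link-EDS-division-polynomial}, Theorem \ref{theoremA-Ayad}); in your proposal they remain a sketch, and the archimedean identity behind the $\frac32\deg(\sigma)h(E')$ constant is not something a purely local, resultant-based bookkeeping produces for free. Second, $\psi_\sigma$ is not monic: its leading coefficient is $d_\sigma$, so $\gcd(W_{n},B_{nP'})$ is only a divisor of $d_\sigma$, not $1$; this is harmless for the final statement (it is absorbed into the factor $\deg(\sigma)^{2}$), but it invalidates the exact disjointness and the clean factorization $B_{nP}=B_{nP'}W_{n}/\sqrt{g_{n}}$ on which your counting argument relies, so that part would need to be redone with the correction in place.
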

An algorithm for the resolution of Thue equations is described in \cite{Tzanakis-De-Weger}. Using Theorem \ref{Introduction-Theorem-Thue} a theorical method for the computation of $\mathcal{I}(P)$ follows. However the statement of Theorem \ref{Introduction-Theorem-Thue} involves a huge number of Thue equations. We deal with this difficulty by adapting results from \cite{Tzanakis-De-Weger}. The main tool is diophantine approximation and especially inequalities of the form
$$\widehat{h_{v_{\infty}}}(P)\le \epsilon\widehat{h}(P) + M$$
where $P$ is a point on an elliptic curve $E$ (defined over a number field $K$), the place $v_{\infty}$ varies among all archimedean places of $K$ and  $\epsilon\in ]0,1[$ and $M>0$ are constants (independent of $P$). In section \ref{section-un} we explain how to deduce from such inequalities an explicit upper bound  $N$ on $\max\left(\mathcal{I}(P)\right)$ that depends only on $h(E),$ $\widehat{h}(P)$, $\epsilon$ and $M$. This leads to a two step method to compute $\mathcal{I}(P)$: 
\begin{enumerate}
\item apply Baker's method to compute explicit values for $\epsilon$ and $M$ (see section \ref{section-quatre});
\item use the bound $N$ to sieve for all prime power terms in $(B_{n\sigma (P' )})_{n\in\N}$. 
\end{enumerate}
We do not insist on step (b) since it can be done using classical sieving algorithm to search for indices $n$ such that either $B_{nP'}$ has no prime factor coprime to $B_{P'}$ or $B_{n\sigma (P')}$ has no prime factor coprime to $B_{nP'}$. We focus instead on Step (a). In section \ref{section-quatre} we prove 
 the following three explicit bounds.
 \begin{itheorem}\label{Bound-compositecase-N1-N3}
We use notation \ref{notation-P-Pprime-sigma}. Let $\mathcal{F}_{E'}$ and $\mathcal{F}_{E}$ (respectively $\Delta_{E'}$ and $\Delta_{E}$) be the conductors (respectively the discriminants) of $E'$ and $E$. Denote by $S_{\sigma} := \max\left\{\frac{\log\left|\Delta_{E'}\right|}{\log\left|\mathcal{F}_{E'}\right|}, \frac{\log\left|\Delta_{E}\right|}{\log\left|\mathcal{F}_{E}\right|}\right\}$ the maximum of the two Szpiro ratios for $E'$~and~$E$. As in \cite{hindrysilverman} we consider the constant $C_{\sigma} : = \max\left\{ 1, (20S_{\sigma})^{8}10^{4S_{\sigma}}\right\}$.
\begin{enumerate}  
\item Let $n$ be an integer such that  at most one prime factor $B_{n\sigma (P')}$ is not a prime factor of $B_{P'}$. Then we have 
$$\textrm{either }n\textrm{ is prime or }n\le\max\left\{ 18C_{\sigma}\left(\log (70 C_{\sigma})\right)^{2}, 490000 C_{\sigma}\right\}$$
\item Denote by $N_{i}$ the $i$-th largest prime index such that  at most one prime factor $B_{N_{i}\sigma (P')}$ is coprime to $B_{P'}$. Then we have 
$$N_{1}\le\max\left\{ 4.2\times 10^{30}C_{\sigma}, 4\times 10^{27}C_{\sigma}^{7/2}\widehat{h}(\sigma (P'))^{5/2}\right\}$$
$$N_{3}\le 77C_{\sigma}$$
\end{enumerate}
\end{itheorem}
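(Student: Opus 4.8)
The plan is to establish the two statements by different mechanisms, in each case reducing — via the Hindry-Silverman quantitative form of Lang's conjecture, which trades every curve-dependent quantity for the single parameter $C_{\sigma}$ — to an input produced in Section~\ref{section-quatre}. For part~(a) that input is an explicit version of Silverman's primitive-divisor theorem for the sequence $(B_{nP})$; for part~(b) it is an inequality $\widehat{h_{v_{\infty}}}(P)\le\epsilon\,\widehat{h}(P)+M$ with explicit $0<\epsilon<1$ and $M>0$, fed into the reduction of Section~\ref{section-un}. Throughout one uses \cite{advance} to pass between naive and canonical heights, globally and locally, with explicit error terms in $h(E)$ and $h(E')$; the identity $\widehat{h}(\sigma(P'))=d\,\widehat{h}(P')$; and the observation that a prime dividing $B_{nP'}$ but none of $B_{mP'}$ for $1\le m<n$ is automatically coprime to $B_{P'}$ and, away from the primes dividing $d\,\Delta_{E'}$, coprime to the complementary factor $\psi_{\sigma}(nP')$ of the Thue relation in Theorem~\ref{Introduction-Theorem-Thue}.

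For part~(a) I would argue as follows. An explicit primitive-divisor theorem — itself a local-height estimate, since a primitive prime can be missing from $B_{kP}$ only when $\widehat{h}(kP)=k^{2}\widehat{h}(P)$ is concentrated at the bad and archimedean places, and hence bounded in terms of $h(E)$ — supplies an explicit $T=T(h(E),\widehat{h}(P))$ such that $B_{kP}$ has a primitive prime for every $k\ge T$, and Hindry-Silverman gives $T\ll\sqrt{C_{\sigma}}\,\log C_{\sigma}$. If $n$ is composite and $n\ge T^{2}$, then its largest proper divisor $n/q$, with $q$ the least prime factor of $n$, satisfies $n/q\ge\sqrt{n}\ge T$, so $B_{(n/q)P}$ and $B_{nP}$ both carry primitive primes; these are distinct (the primitive prime of $B_{nP}$ divides no $B_{kP}$ with $k<n$, in particular not $B_{(n/q)P}$) and, after enlarging $T$ to discard the primes dividing $d\,\Delta_{E'}$, both are coprime to $B_{P'}$. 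Thus $B_{n\sigma(P')}$ has two prime factors coprime to $B_{P'}$, contrary to hypothesis, and composite $n$ obeys $n\le T^{2}\asymp C_{\sigma}(\log C_{\sigma})^{2}$; a cruder form of the same estimate yields the alternative bound $490000\,C_{\sigma}$.

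For part~(b) the factorisation trick is unavailable, so I would run the elliptic-logarithm method head-on. If a prime index $p$ is admissible, then $\tfrac12 p^{2}\widehat{h}(\sigma(P'))$ is, up to $O(h(E))$, carried by the primes dividing $B_{P'}$ together with the single exceptional prime $\ell$; since $d\ge2$ forces $\psi_{\sigma}(pP')$ to be large, it too must be a power of $\ell$ up to bounded factors, so $pP$ lies $\ell$-adically very close to the origin, that is $\widehat{h_{\ell}}(pP)\gg p^{2}\widehat{h}(\sigma(P'))-O(h(E))$. Bounding $\widehat{h_{\ell}}(pP)$ from above by an $\ell$-adic linear form in two elliptic logarithms ($p$-adic David-type estimate), and $\widehat{h_{v_{\infty}}}(pP)$ from above by a complex linear form in elliptic logarithms, yields explicit $\epsilon$ and $M$; the reduction of Section~\ref{section-un}, simplified with Hindry-Silverman, then gives the bound on $N_{1}$, the factor $\widehat{h}(\sigma(P'))^{5/2}$ being the residue of the height of the approximated point that appears inside David's bound. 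The sharper $N_{3}\le 77\,C_{\sigma}$ comes from the remark that admissibility of $p$ forces $B_{pP'}$ to be essentially a prime power as well, so one may run a factorisation-type argument on the subsequence $(B_{nP'})$ against its complementary sequence; this needs no height-dependent transcendence estimate, only coprimality of the two parts up to $d\,\Delta_{E'}$, and so is linear in $C_{\sigma}$ once three admissible primes are assumed to exist.

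The main obstacle will be making the two transcendence estimates genuinely explicit \emph{uniformly in the a priori unknown prime $\ell$}: the $p$-adic David-type bound for $\widehat{h_{\ell}}(pP)$ carries factors such as $\log\ell$ and the ($\ell$-adic) periods and quasi-periods of $E$, and one must absorb $\log\ell$ against the gain coming from $d\ge2$ while replacing all period data, $h(E)$ and $h(E')$ by powers of $C_{\sigma}$ (and modest powers of $\widehat{h}(\sigma(P'))$) by means of Hindry-Silverman and \cite{advance}. Keeping this bookkeeping from blowing up is precisely what fixes both the shape and the size of the constants $4.2\times10^{30}C_{\sigma}$, $4\times10^{27}C_{\sigma}^{7/2}\widehat{h}(\sigma(P'))^{5/2}$ and $77\,C_{\sigma}$.
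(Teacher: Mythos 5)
Your proposal has genuine gaps in both parts, and in each case the missing ingredient is exactly the point the paper's argument is built to circumvent. For part (a) you take as input an explicit primitive-divisor theorem for $(B_{nP})$ with a threshold $T$ controlled by $C_{\sigma}$, justified by the claim that if $B_{kP}$ has no primitive prime then the height $k^{2}\widehat{h}(P)$ is concentrated at the bad and archimedean places ``and hence bounded in terms of $h(E)$''. That justification fails: absence of a primitive divisor confines the finite part of the height to places dividing $B_{(k/q)P}$ for primes $q\mid k$ (contributing roughly $\sum_{q}(k/q)^{2}\widehat{h}(P)$), but it gives no control whatsoever on $\widehat{h_{\infty}}(kP)$ --- bounding that archimedean term is precisely the ineffective step in Siegel's theorem, and if you supply it via David's estimates the resulting $T$ carries constants of size $10^{30}$ rather than $\sqrt{C_{\sigma}}\log C_{\sigma}$, so the bounds $18C_{\sigma}(\log(70C_{\sigma}))^{2}$ and $490000\,C_{\sigma}$ are out of reach by this route. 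The paper instead exploits that a composite index $n=n_{1}n_{2}$ makes $nP$ the image of $P'$ under a chain of at least three isogenies, and for such ``doubly magnified'' points Proposition \ref{Siegel-fort-cas-magnifie} gives an elementary effective archimedean bound $\widehat{h_{\infty}}\le 7h(E')+8+\log\deg$ (via the division polynomial being small, so $x(P')$ is close to a torsion abscissa, itself bounded by \cite[Lemme 10.1]{David}); Corollary \ref{application-cas-doublement-magnifie} then bounds $n\le n_{1}^{2}$ directly, with no transcendence measure and hence with constants linear in $C_{\sigma}$.

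For part (b) your $\ell$-adic strategy is both different from the paper's and incomplete on your own account: a $p$-adic David-type bound for $\widehat{h_{\ell}}(pP)$ carries $\ell$-dependent data, and you flag but do not resolve the uniformity in the unknown prime $\ell$. The paper never needs non-archimedean transcendence: the finite places are handled combinatorially through the divisibility structure ($B_{nP'}\mid B_{n\sigma(P')}$ and Lemma \ref{easy-factorization}), so only the archimedean David bound enters (Lemma \ref{Application-Sinnou-David} fed into Theorem \ref{theorem-general-bound-PSD-terms}, giving Proposition \ref{borne-non-uniforme} and the $N_{1}$ bound). Moreover your explanation of $N_{3}\le 77C_{\sigma}$ --- a ``factorisation-type argument'' using only coprimality up to $d\Delta_{E'}$ --- contains no mechanism that produces an archimedean estimate, and coprimality alone bounds nothing. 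What the paper actually uses is a Mumford-style gap principle in elliptic logarithms (Proposition \ref{proposition-finle-gap-principle}): among three pairwise coprime admissible prime indices, two fall in the same case, the integrality of $n_{j}m_{i}-n_{i}m_{j}$ forces $\min(n_{i},n_{j})$ to be small compared with $\log\max(n_{i},n_{j})$, and the huge David constant enters only through $\log N_{1}$, which is why the final bound is linear in $C_{\sigma}$. Without that gap principle (or an equivalent), your sketch cannot reach $N_{3}\le 77C_{\sigma}$.
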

If Szpiro's conjecture is true then the upper bound on $N_{3}$ in Theorem \ref{Bound-compositecase-N1-N3} is independent from the choice for $(E,P,\sigma)$. Thus Theorem \ref{Bound-compositecase-N1-N3} gives an explicit version of the main result in  \cite{eims}. In section \ref{section-deux} we deduce from section \ref{section-un} an improvement of the main result in  \cite{eims}: assuming two classical conjectures, we prove the existence of a uniform bound on the index (and not only on the number) of prime power terms in elliptic divisibility sequences. 
\begin{itheorem}\label{theorem-intro-Hall-Lang}
We use notation \ref{notation-P-Pprime-sigma} and  we assume 
\begin{enumerate}
\item the Lang-Silverman conjecture holds (see Conjecture \ref{Lang-Conjecture});
\item the Hall-Lang conjecture holds (see Conjecture \ref{Hall-Lang-Conjecture});
\item that $\deg (\sigma )>4M$ (where $M$ is defined as in Conjecture \ref{Hall-Lang-Conjecture}).
\end{enumerate}
Then there is a constant $N\ge 0$ (independent of $(E, P,\sigma )$) such that $B_{nP}$ has two distinct prime factors coprime to $B_{P'}$ for every index $n>N$.
\end{itheorem}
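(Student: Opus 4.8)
The plan is to combine Theorem \ref{Bound-compositecase-N1-N3} with the conjectural inputs to upgrade the bound on the \emph{number} of prime power terms to a bound on their \emph{indices}, and then to rule out the remaining indices using a height comparison. First I would recall the strong-divisibility factorisation: by (\ref{Strong-Divisibility-Property}) the sequence $(B_{nP'})$ divides $(B_{nP})$ up to the contribution of $\psi_\sigma$, so saying that $B_{nP}$ fails to have two prime factors coprime to $B_{P'}$ forces either $nP'$ to be an $S(P')$-integral point or $(A_{nP'},B_{nP'}^2)$ to solve one of the Thue equations of Theorem \ref{Introduction-Theorem-Thue}. The first alternative is handled by Siegel's theorem (made effective via Baker's method, exactly as in section \ref{section-quatre}) together with the Hall--Lang conjecture, which bounds $h_\infty(nP')$ — equivalently the size of integral points — linearly in $h(E')$; this produces an absolute bound on the finitely many such $n$. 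The real work is the second alternative.

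For the Thue-equation alternative, the strategy is the diophantine-approximation argument sketched in the introduction: one needs an inequality of the shape $\widehat{h_{v_\infty}}(P')\le\epsilon\,\widehat h(P')+M'$ with $\epsilon\in{}]0,1[$ and $M'$ absolute, and then section \ref{section-un} converts this into an explicit upper bound $N$ on $\max(\mathcal I(P))$ depending only on $h(E'),\widehat h(P'),\epsilon,M'$. The Lang--Silverman conjecture enters precisely here: it gives the lower bound $\widehat h(P')\gg h(E')$ (for points of infinite order), which is what lets the $h(E')$-dependence of $N$ be absorbed and makes $N$ genuinely uniform in $(E,P,\sigma)$. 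The hypothesis $\deg(\sigma)>4M$ is what I expect to be the crucial quantitative lever: in the factor $B_{nP'}^{\deg(\sigma)-1}\psi_\sigma(nP')$ the exponent $\deg(\sigma)-1$ multiplies the local-height contribution, and one wants this to dominate the error term $M$ coming from the Hall--Lang inequality $\widehat{h_\infty}(P)\le\frac14 h_\infty(\sigma(P'))+M$ (this is presumably the normalisation of $M$ in Conjecture \ref{Hall-Lang-Conjecture}); the condition $\deg(\sigma)>4M$ guarantees the resulting coefficient of $\widehat h(P')$ is $<1$, i.e. gives the needed $\epsilon<1$.

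Concretely I would proceed as follows. Write $\widehat h_{v_\infty}(nP')$ in terms of $\widehat h(nP')=n^2\widehat h(P')$ using the quasi-periodicity of the local height and the fact that $B_{nP'}^{\deg(\sigma)-1}\mid d(n)\,\psi_\sigma(nP')^{-1}$ forces $\log B_{nP'}$ to be small relative to $\log B_{n\sigma(P')}$; feed in $|d(n)|\le\deg(\sigma)e^{\frac32\deg(\sigma)h(E')}$ from Theorem \ref{Introduction-Theorem-Thue}(1) and the Hall--Lang bound on $\psi_\sigma(nP')$ (a polynomial in the coordinates of $nP'$, hence controlled by $h_\infty(\sigma(P'))$). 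Divide through by $n^2$ and use $\widehat h(P')\ge c\,h(E')$ from Lang--Silverman: the inequality $\deg(\sigma)>4M$ makes the coefficient of $h(E')/n^2$ strictly negative after rearrangement, so for $n$ exceeding an absolute constant the inequality is violated — contradiction. Combining this $N$ with the absolute bound from the integral-point case yields the desired uniform $N$. The main obstacle, and the step to be careful with, is tracking the archimedean local height of $\psi_\sigma(nP')$ and of $nP'$ uniformly in $\sigma$: one must be sure the implied constants depend only on $\deg(\sigma)$ and $h(E')$ and not on further data, which is where the standardised minimal Weierstrass model in Notation \ref{notation-P-Pprime-sigma} and the explicit comparison between naive and canonical (local) heights from \cite{advance} and \cite{hindrysilverman} are used.
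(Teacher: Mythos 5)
Your overall architecture --- produce an inequality $\widehat{h_{\infty}}(nP')\le\epsilon\,\widehat{h}(nP')+M'$ with $\epsilon<1$ and $M'$ linear in $h(E')$, feed it into the machinery of section \ref{section-un} (Theorem \ref{theorem-general-bound-PSD-terms}), and use Lang--Silverman to turn the resulting bound, which involves ratios like $h(E')/\widehat{h}(P')$, into an absolute constant --- is the right one and matches the paper's proof (Proposition \ref{All-lang-Hall-Lang}) in outline. But there is a genuine gap at the decisive step: you never say where the $\epsilon<1$ inequality comes from, and the mechanism you propose rests on a misreading of Conjecture \ref{Hall-Lang-Conjecture}. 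In the paper $M$ is the \emph{exponent} in the bound $\max\{|x|,|y|\}\le K\max\{|A|,|B|\}^{M}$ for integral points on $y^{2}=x^{3}+Ax+B$; it is not an additive error term in an inequality such as $\widehat{h_{\infty}}(P)\le\frac{1}{4}h_{\infty}(\sigma(P'))+M$. Since Hall--Lang, as stated, concerns only integral points, it says nothing directly about a general multiple $nP'$, so your Thue-equation branch has no source for the required Siegel-type inequality; and the substitute you invoke (Baker/David as in section \ref{section-quatre}) cannot give uniformity, because those bounds grow like $\widehat{h}(\sigma(P'))^{5/2}$ (Proposition \ref{borne-non-uniforme}, Theorem \ref{Bound-compositecase-N1-N3}) and hence depend on the point.

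The missing idea is the scaling trick of Proposition \ref{All-lang-Hall-Lang}: for \emph{every} $n$ the pair $(A_{nP'},C_{nP'})$ is an integral point on the auxiliary curve $y^{2}=x^{3}+A'B_{nP'}^{4}x+B'B_{nP'}^{6}$, so Hall--Lang applies to every multiple, integral or not, and gives $h(nP')\le 3M\bigl(h(nP')-h_{\infty}(nP')\bigr)+6Mh(E')+\frac{1}{2}\log K$; comparing naive and canonical heights this becomes $\widehat{h_{\infty}}(nP')\le\left(1-\frac{1}{3M}\right)\widehat{h}(nP')+4h(E')+\frac{\log K}{6M}+1.88$, and similarly on $E$. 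Thus $\epsilon=1-\frac{1}{3M}$, and the hypothesis $\deg(\sigma)>4M$ is exactly what guarantees $\deg(\sigma)(1-\epsilon)>1$ (with room to absorb the $\log\deg(\sigma)$ term), the condition required by Theorem \ref{theorem-general-bound-PSD-terms}; it has nothing to do with the exponent $\deg(\sigma)-1$ in the Thue equation. Once this inequality is available, the dichotomy via Theorem \ref{Introduction-Theorem-Thue} is unnecessary: Theorem \ref{theorem-general-bound-PSD-terms} treats the $S(P')$-integral case and the magnified case in one stroke, and Lang--Silverman together with $h(E')\ge\frac{1}{12}\log 2$ makes the resulting bound independent of $(E,P,\sigma)$.
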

Note that we need to assume the Hall-Lang conjecture in Theorem \ref{theorem-intro-Hall-Lang} because $n\in\mathcal{I}(P)$ whenever $nP$ is integral. This hypothesis can not be removed in the general case. However if $P$ is in the unbounded component of $E$ or if $P$ is doubly magnified then Theorem \ref{theorem-intro-Hall-Lang} can be stated in an explicit way without assuming the Hall-Lang conjecture (see section \ref{section-cinq} for details).

Section \ref{section-six} focus on the particular case of elliptic curves $E_{A}$ defined over $\Q$ by Weierstrass equations  
$$E_{A}: y^{2}=x(x^{2}-A)$$
(where $A$ is a positive integer with all valuations less than or equal to $4$). Divisibility sequences associated to magnified $\Q$-points in the bounded connected component of $E_{A}(\R )$  tend to have very few prime power terms. In fact, if $P$ is the multiple of a rational point by an odd integer, then $B_{nP}$ has no prime power term with index $n>8$. 

Other families of elliptic divisibility sequences with very few prime power terms can be obtained in the same way using modular curves to parametrize the set of cyclic isogenies between elliptic curves defined over $\Q$. 

\section{Computing the set of indices of prime power terms in magnified elliptic divisibility sequences.}\label{section-trois}

One of the main issue when studying the primality conjecture for magnified points is to compute the image of a given point under a given isogeny in an appropriated way. This can be done using division polynomials to reformulate a formula from V\'elu. For the convenience of the reader we begin by reminding some basic facts on division polynomials.

\subsection{Background on division polynomials.}\label{section-notation}

\begin{snotation}\label{notation-division-polynomial}
We use notation \ref{notation-P-Pprime-sigma}. Let 
$$E:y^{2}+a_{1}y+a_{3}xy=x^{3}+a_{2}x^{2}+a_{4}x+a_{6}$$
$$E':y^{2}+a_{1}'y+a_{3}'xy=x^{3}+a_{2}'x^{2}+a_{4}'x+a_{6}'$$
be the (standardized) minimal Weierstrass equations for $E$ and $E'$. Let $\omega_{E}$ (respectively $\omega_{E'}$) be the (minimal) invariant differential form associated to $E$ (respectively $E'$). Let $d_{\sigma}\in\Q$ be such that $\sigma^{*}\omega_{E}=d_{\sigma}\omega_{E'}.$ We denote 
\begin{itemize}
\item by $\psi_{\sigma}\in\Q (E)$ the division polynomial associated to $\sigma$ i.e. the unique function $\psi_{\sigma}$ on $E$ such that $\psi_{\sigma}^{2}=d_{\sigma}^{2}\displaystyle\prod_{T\in\ker (\sigma )}(x-x(T))$ ;
\item by $\phi_{\sigma}$ the polynomial $\phi_{\sigma}:=\displaystyle\prod_{Q\in E'(\Q), x(\sigma (Q))=0}(x-x(Q))$.
\end{itemize}
\end{snotation}

\begin{slemma}\label{integrality-phi-psi} We use notation \ref{notation-division-polynomial}. Then $d_{\sigma}$ is an element of $\Z$ (equal to $m^{2}$ when $\sigma =[m]$) and for every $P'\in E'(\Q )$ we have 
\begin{equation}\label{defintion-pis-E-sigma}
x(\sigma (P'))=\frac{\phi_{\sigma}(P')}{\psi_{\sigma}(P')^{2}}.
\end{equation}
\end{slemma}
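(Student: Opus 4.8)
The plan is to verify the two assertions separately, relying on Vélu's formulas and a careful bookkeeping of which quantities are integral.

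First, for the formula $x(\sigma(P')) = \phi_\sigma(P')/\psi_\sigma(P')^2$: Vélu's explicit description of an isogeny $\sigma\colon E' \to E$ with kernel $G = \ker(\sigma)$ expresses the $x$-coordinate of $\sigma(Q)$ as
\[
x(\sigma(Q)) = x(Q) + \sum_{T \in G \setminus \{0\}} \bigl( x(Q+T) - x(T) \bigr).
\]
I would rewrite this rational function of $x(Q)$ (and $y(Q)$, which drops out since the expression is even) and identify its numerator and denominator. The denominator, after clearing, is exactly $\prod_{T \in G\setminus\{0\}} (x(Q) - x(T))$, and up to the scaling constant $d_\sigma$ this is $\psi_\sigma(Q)^2$ by the definition in Notation~\ref{notation-division-polynomial}. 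The numerator is then a polynomial in $x(Q)$ of degree $\deg\sigma$; comparing it with $\prod_{Q': x(\sigma(Q'))=0}(x - x(Q'))$, which is visibly the monic polynomial whose roots are the $x$-coordinates of the fibre of $\sigma$ over the point at $x=0$, shows it equals $\phi_\sigma$ after matching leading coefficients (both are forced to make $x(\sigma(Q)) \to \infty$ with the right leading behaviour as $x(Q)\to\infty$, namely like $x(Q)$, since $\sigma$ is separable of degree $d$ and $\deg\phi_\sigma = d = \deg\psi_\sigma^2$... wait, $\deg\psi_\sigma^2 = d - 1$ when $\sigma$ is not multiplication, so the leading terms match as $x^d/x^{d-1} = x$). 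Alternatively, and perhaps more cleanly, I would note that $\phi_\sigma$ and $\psi_\sigma^2$ are each determined up to scalar by their divisors of zeros and poles on $E'$, and that $x\circ\sigma$ has divisor $\sigma^*(\text{div}(x)) = \sigma^*((0,*)+(0,*') - 2(O_E))$, compute both sides, and pin down the constant using the relation $\sigma^*\omega_E = d_\sigma\,\omega_{E'}$ near a point of the kernel.

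Second, for $d_\sigma \in \Z$: since $E$ and $E'$ are given by minimal Weierstrass equations, $\omega_E$ and $\omega_{E'}$ are Néron differentials, hence generate the respective modules of global differentials over $\Z$ (in the sense of the Néron model). An isogeny over $\Q$ of curves with good-or-minimal reduction pulls back the Néron differential on $E$ to an integral multiple of the one on $E'$ — this is the standard integrality statement for $d_\sigma$ (see e.g.\ the theory of Néron models, or the fact that $\sigma$ extends to a morphism of Néron models and differentials pull back integrally). When $\sigma = [m]$, one has $[m]^*\omega_E = m\,\omega_E$; but the statement claims $d_\sigma = m^2$, so I should double-check the normalization: in fact with the convention that $\psi_\sigma^2 = d_\sigma^2 \prod_{T\in\ker}(x - x(T))$ and $\ker[m]$ has $m^2-1$ nonzero points, matching against the classical division polynomial $\psi_m$ (which has leading coefficient $m$ in $x$, so $\psi_m^2$ has leading coefficient $m^2$ and $m^2-1$ roots counted with the right multiplicity) forces $d_{[m]} = m$... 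The discrepancy means I need to be careful: the cited value $d_\sigma = m^2$ for $\sigma=[m]$ must refer to a different constant, or $\psi_\sigma$ here is normalized by $\psi_\sigma^2 = d_\sigma^2\prod(\cdots)$ with $d_\sigma^2$ playing the role of the classical leading coefficient; then $d_\sigma^2 = m^2$ gives $d_\sigma = \pm m$, still not $m^2$. I would resolve this by recomputing $\sigma^*\omega_E$ directly from Vélu: differentiating the $x$-coordinate formula and using $\omega = dx/(2y + a_1 x + a_3)$, one reads off $d_\sigma$ as the ratio, and the claim is that this is an integer; the parenthetical "$= m^2$" should then be checked against whatever the paper's $\omega$-normalization actually yields, and I would simply adopt whatever is consistent with Notation~\ref{notation-division-polynomial}.

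The main obstacle is the integrality of $d_\sigma$: proving it cleanly requires either invoking Néron models (that $\sigma$ extends over $\mathrm{Spec}\,\Z$ and pulls back the invariant differential integrally, using minimality of both Weierstrass equations) or else an explicit local computation at each prime showing the Vélu denominator does not introduce a $p$ in the denominator of the differential ratio. The formula \eqref{defintion-pis-E-sigma} itself is, by contrast, a formal identity of rational functions that follows directly from Vélu once the constants are matched, so I expect that part to be short; the bulk of the work — and the only place where "minimal" is genuinely used — is in the integrality claim.
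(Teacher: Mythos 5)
Your treatment of the integrality of $d_{\sigma}$ is fine and genuinely different from the paper's: you invoke the N\'eron mapping property (minimal Weierstrass differentials are N\'eron differentials, $\sigma$ extends to the N\'eron models, so $\sigma^{*}\omega_{E}$ is an integral multiple of $\omega_{E'}$), whereas the paper works with V\'elu's explicit integral model $\mathcal{E}$ of $E$ and uses minimality of $E$ to show that the isomorphism $\varphi:E\to\mathcal{E}$ satisfies $\widetilde{x}\circ\varphi=s^{2}x+t$ with $s,t\in\Z$ and then identifies $s=d_{\sigma}$. Both arguments work; the paper's has the advantage that the same bookkeeping also settles the constant in the $x$-coordinate formula. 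Your suspicion about the parenthetical ``$=m^{2}$'' is also well founded: with the normalization $\psi_{\sigma}^{2}=d_{\sigma}^{2}\prod(x-x(T))$ and the paper's own citation ($[m]^{*}\omega=m\omega$), the value is $d_{[m]}=m$, not $m^{2}$.

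The genuine gap is in your verification of $x(\sigma(P'))=\phi_{\sigma}(P')/\psi_{\sigma}(P')^{2}$, and it is exactly the point your hedging skips over. V\'elu's formula $x(Q)+\sum_{T\neq 0}\bigl(x(Q+T)-x(T)\bigr)$ is the $x$-coordinate of the \emph{normalized} isogeny onto V\'elu's model $\mathcal{E}$, not of $\sigma$ onto the given minimal model $E$; the two differ by the isomorphism $\varphi$ with scaling $s$. If you match leading terms on $E$ as you propose (``$x(\sigma(Q))$ behaves like $x(Q)$ at infinity, and $x^{d}/x^{d-1}=x$''), you are implicitly asserting $d_{\sigma}=\pm1$: the correct asymptotics, read off from the formal group or from $\sigma^{*}\omega_{E}=d_{\sigma}\omega_{E'}$, is $x(\sigma(Q))\sim d_{\sigma}^{-2}x(Q)$, and it is precisely this factor $d_{\sigma}^{-2}$, cancelling the $d_{\sigma}^{2}$ in $\psi_{\sigma}^{2}$, that makes the constant in the divisor comparison equal to $1$. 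So your first route, as written, would fail whenever $d_{\sigma}\neq\pm1$, and your second route only gestures at the fix (``pin down the constant using $\sigma^{*}\omega_{E}=d_{\sigma}\omega_{E'}$'') without carrying out the computation; declaring that you will ``adopt whatever normalization is consistent'' leaves unproved the very identity the lemma asserts. The paper closes this by evaluating $(x\circ\sigma)/x$ at the point at infinity of $E'$ after composing V\'elu's map with $\varphi$ (so the constant comes out as $(d_{\sigma}/s)^{2}=1$); your proof needs that, or an equivalent explicit leading-coefficient computation at $O_{E'}$, to be complete.
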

\begin{proof}In \cite{Velu} V\'elu defines an elliptic curve $\mathcal{E}$ using a Weierstrass equation with integral coefficients
$$\mathcal{E}:\widetilde{y}^{2}+\alpha_{1}\widetilde{y}+\alpha_{3}\widetilde{x}\widetilde{y}=\widetilde{x}^{3}+\alpha_{2}\widetilde{x}^{2}+\alpha_{4}\widetilde{x}+\alpha_{6}$$
and an isomorphism $\varphi :E\longrightarrow\mathcal{E}$ such that
$$\widetilde{x}(\varphi(\sigma (P')))=x(P')+\displaystyle\sum_{Q\in \ker(\sigma ),~Q\neq 0}\left(\frac{t_{Q}}{x(P')-x(Q)}+\frac{u_{Q}}{(x(P')-x(Q))^{2}}\right)$$
for every $\Q$-point $P'\notin\ker (\sigma )$ (where $t_{Q}\in\C$ and $u_{Q}\in\C$ are independent from $P'$). Since $E$ is given by a minimal equation and since $\mathcal{E}$ is a model of $E$, we have 
$\widetilde{x}\circ\varphi =s^{2}x+t$ where $s$ and $t$ are two integers. The invariant differential form on $E$ associated to $\mathcal{E}$ is equal to $\varphi^{*}\omega_{\mathcal{E}} = s^{-1}\omega_{E}.$ In \cite{Velu} V\'elu asserts that $(\varphi\circ\sigma )^{*}\omega_{\mathcal{E}}=\omega_{E'}$ i.e. that  $\sigma^{*}\omega_{E}= s\omega_{E'}$. In other words $s=d_{\sigma}$ and it follows that $d_{\sigma}\in\Z$. See \cite[Chapter III, Corollary 5.3]{aec} for the computation of $d_{\sigma}$ when $\sigma =[m]$.

The divisors associated to the two functions $x\circ\sigma$ and $\frac{\phi_{\sigma}}{\psi_{\sigma}^{2}}$ are equals. It follows that $(x\circ\sigma )\frac{\psi_{\sigma}^{2}}{\phi_{\sigma}}$ is an element in $\Q$. In fact using V\'elu's formula to evaluate $\frac{x\circ\sigma}{x}$ at the point at infinity on $E'$ and using the definition of the invariant differential it comes that $x\circ\sigma = \left(\frac{d_{\sigma}}{s}\right)^{2}\frac{\phi_{\sigma}}{\psi_{\sigma}^{2}}=\frac{\phi_{\sigma}}{\psi_{\sigma}^{2}}$.
\end{proof}

\begin{slemma}\label{chain_rule}Let $E,~E',~E''$ be three elliptic curves defined over $\Q$ by Weierstrass equations with integral coefficients. Let $\sigma~:~E~\longrightarrow~E'$ and $\tau~:~E'~\longrightarrow~E''$ 
be two isogenies defined over $\Q$. 
Then the two following equalities hold:
\begin{equation}
\begin{array}{r}
\psi_{\sigma}^{2\deg(\tau )}(\phi_{\tau}\circ\sigma )=\phi_{\tau\circ\sigma};\\
\psi_{\sigma}^{2\deg(\tau )}(\psi_{\tau}\circ\sigma )^{2}=\psi_{\tau\circ\sigma}^{2}.\\
\end{array}
\end{equation}
\end{slemma}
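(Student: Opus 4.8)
The plan is to prove the two identities by comparing divisors and then pinning down the leading constant via the invariant differential, exactly in the spirit of the proof of Lemma \ref{integrality-phi-psi}. First I would establish the second identity, $\psi_{\sigma}^{2\deg(\tau)}(\psi_{\tau}\circ\sigma)^{2}=\psi_{\tau\circ\sigma}^{2}$, since this is the structurally fundamental one; the first will then follow cheaply from \eqref{defintion-pis-E-sigma}. By Snotation \ref{notation-division-polynomial}, $\psi_{\tau\circ\sigma}^{2}=d_{\tau\circ\sigma}^{2}\prod_{T\in\ker(\tau\circ\sigma)}(x_{E}-x_{E}(T))$ as a function on $E$, where I write $x_{E}$ for the $x$-coordinate on $E$ (and similarly $x_{E'}$ on $E'$). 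Likewise $(\psi_{\tau}\circ\sigma)^{2}=d_{\tau}^{2}\prod_{U\in\ker(\tau)}(x_{E'}\circ\sigma-x_{E'}(U))$ and $\psi_{\sigma}^{2}=d_{\sigma}^{2}\prod_{V\in\ker(\sigma)}(x_{E}-x_{E}(V))$.

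The key step is the divisor computation. The kernel $\ker(\tau\circ\sigma)$ sits in the exact sequence $0\to\ker(\sigma)\to\ker(\tau\circ\sigma)\to\ker(\tau)\to 0$, so each $U\in\ker(\tau)$ is the $\sigma$-image of exactly $\deg(\sigma)$ points $T\in\ker(\tau\circ\sigma)$. I would argue that the function $x_{E'}\circ\sigma-x_{E'}(U)$ on $E$ has divisor $\sum_{T:\sigma(T)=U}(T)-\deg(\sigma)\,(O_{E})$ — using that $x_{E'}-x_{E'}(U)$ on $E'$ has divisor $(U)+(-U)-2(O_{E'})$ and pulling back by $\sigma$, which multiplies multiplicities by the (tame, in char $0$) ramification, together with the fact that $\sigma$ is unramified so $\sigma^{*}(U)=\sum_{\sigma(T)=U}(T)$ — and comparing with $\prod_{V\in\ker(\sigma)}(x_{E}-x_{E}(V))$, which has divisor $\sum_{V\in\ker(\sigma)}(V)-\deg(\sigma)(O_{E})$. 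Multiplying the $\deg(\tau)$ factors over $U\in\ker(\tau)$ against $\psi_{\sigma}^{2\deg(\tau)}$ and matching against the divisor of $\psi_{\tau\circ\sigma}^{2}$, one checks the divisors on $E$ agree: both sides vanish to order $1$ at each nonzero point of $\ker(\tau\circ\sigma)$ and have the matching pole order $2\deg(\tau)\deg(\sigma)=2\deg(\tau\circ\sigma)$ at $O_{E}$ (the even/$[-1]$-invariance makes the multiplicities work out without the factor-of-two subtlety). Hence $\psi_{\sigma}^{2\deg(\tau)}(\psi_{\tau}\circ\sigma)^{2}$ and $\psi_{\tau\circ\sigma}^{2}$ differ by a constant in $\Q^{\times}$.

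To fix that constant I would evaluate the invariant differentials: by definition $\sigma^{*}\omega_{E'}=d_{\sigma}\omega_{E}$, $\tau^{*}\omega_{E''}=d_{\tau}\omega_{E'}$, hence $(\tau\circ\sigma)^{*}\omega_{E''}=d_{\sigma}d_{\tau}\omega_{E}$, so $d_{\tau\circ\sigma}=d_{\sigma}d_{\tau}$. Then the leading coefficients (in a uniformizer at $O_{E}$, or equivalently the normalization forced on $\psi$ by $d_{\sigma}$ in Snotation \ref{notation-division-polynomial}) match: $d_{\sigma}^{2\deg(\tau)}\cdot d_{\tau}^{2}=d_{\tau\circ\sigma}^{2}$ precisely because $d_{\tau\circ\sigma}=d_{\sigma}d_{\tau}$, giving constant $1$. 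This proves the second identity. For the first, note $\phi_{\tau\circ\sigma}=(x_{E}\circ(\tau\circ\sigma))\cdot\psi_{\tau\circ\sigma}^{2}=\bigl(x_{E''}\circ\tau\circ\sigma\bigr)\psi_{\tau\circ\sigma}^{2}$ by \eqref{defintion-pis-E-sigma} applied to $\tau\circ\sigma$; applying \eqref{defintion-pis-E-sigma} to $\tau$ gives $x_{E''}\circ\tau=\phi_{\tau}/\psi_{\tau}^{2}$, so $x_{E''}\circ\tau\circ\sigma=(\phi_{\tau}\circ\sigma)/(\psi_{\tau}\circ\sigma)^{2}$, and substituting the just-proved identity for $\psi_{\tau\circ\sigma}^{2}$ yields $\phi_{\tau\circ\sigma}=\psi_{\sigma}^{2\deg(\tau)}(\phi_{\tau}\circ\sigma)$, as claimed.

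The main obstacle I anticipate is the bookkeeping at $O_{E}$: making sure the pole orders and the leading constants are tracked with the exact normalization of $\psi_{\sigma}$ dictated by $d_{\sigma}$ (rather than an unnormalized product of $(x-x(T))$), and confirming $\sigma$ contributes no ramification since we are in characteristic $0$ with $\sigma$ separable. Everything else is a routine divisor-class comparison; the differential calculation $d_{\tau\circ\sigma}=d_{\sigma}d_{\tau}$ is immediate from functoriality of pullback.
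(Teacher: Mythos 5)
Your overall route is the same as the paper's: the $\psi$-identity is proved by comparing the divisors of $\psi_{\sigma}^{2\deg(\tau)}(\psi_{\tau}\circ\sigma)^{2}$ and $\psi_{\tau\circ\sigma}^{2}$ (the paper simply invokes this comparison, citing the appendix of the Mazur--Tate reference for a more general statement), and the $\phi$-identity is then deduced from $\frac{\phi_{\tau\circ\sigma}}{\psi_{\tau\circ\sigma}^{2}}=x\circ\tau\circ\sigma=\frac{\phi_{\tau}\circ\sigma}{\psi_{\tau}^{2}\circ\sigma}$, exactly as you do. The divisor comparison itself is fine in substance (your displayed divisor of $x_{E'}\circ\sigma-x_{E'}(U)$ is misstated --- it is $\sigma^{*}\bigl((U)+(-U)-2(O_{E'})\bigr)$, so the zeros also lie above $-U$ and the poles are double poles along all of $\ker(\sigma)$, not a single pole of order $\deg(\sigma)$ at $O_{E}$ --- but this washes out once each factor is multiplied by $\psi_{\sigma}^{2}$ and one sums over $U\in\ker(\tau)$, as your parenthetical remark about $[-1]$-invariance suggests you realize).

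The one step that fails as written is the determination of the constant. The equation you assert, $d_{\sigma}^{2\deg(\tau)}d_{\tau}^{2}=d_{\tau\circ\sigma}^{2}$, is false whenever $\deg(\tau)>1$ and $|d_{\sigma}|\neq 1$, precisely because $d_{\tau\circ\sigma}=d_{\sigma}d_{\tau}$. The slip is in treating the leading behaviour of $\psi_{\tau}\circ\sigma$ at $O_{E}$ as if it were that of $\psi_{\tau}$ at $O_{E'}$: with a normalized uniformizer $t$ at $O_{E}$ one has $x_{E'}\circ\sigma\sim d_{\sigma}^{-2}t^{-2}$ (this is exactly the content of $\sigma^{*}\omega_{E'}=d_{\sigma}\omega_{E}$), so $(\psi_{\tau}\circ\sigma)^{2}\sim d_{\tau}^{2}d_{\sigma}^{-2(\deg(\tau)-1)}t^{-2(\deg(\tau)-1)}$, and multiplying by $\psi_{\sigma}^{2\deg(\tau)}\sim d_{\sigma}^{2\deg(\tau)}t^{-2\deg(\tau)(\deg(\sigma)-1)}$ gives leading coefficient $d_{\sigma}^{2}d_{\tau}^{2}=d_{\tau\circ\sigma}^{2}$, as required. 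Equivalently, comparing leading coefficients as polynomials in $x_{E}$: $\psi_{\sigma}^{2\deg(\tau)}(\psi_{\tau}\circ\sigma)^{2}=d_{\tau}^{2}\,\psi_{\sigma}^{2}\prod_{U\in\ker(\tau),\,U\neq 0}\bigl(\phi_{\sigma}-x_{E'}(U)\psi_{\sigma}^{2}\bigr)$, each factor being monic of degree $\deg(\sigma)$, so the leading coefficient is $d_{\sigma}^{2}d_{\tau}^{2}$. So your conclusion (the constant is $1$) is correct, but only after this cancellation of the extra powers of $d_{\sigma}$; the identity you invoked to get it does not hold and should be replaced by one of these computations.
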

\begin{proof}The formula for $\psi_{\tau\circ\sigma}^{2}$ is obtained by comparing the divisors of $(\psi_{\tau}\circ\sigma )^{2}\psi_{\sigma}^{\deg(\tau )}$ and $\psi_{\tau\circ\sigma}^{2}$; see \cite[appendix 1]{Koblitz} for a more general result. The assertion for $\phi_{\tau\circ\sigma}$ follows since $\frac{\phi_{\tau\circ\sigma}}{\psi_{\tau\circ\sigma}^{2}}=x\circ\tau\circ\sigma =\frac{\phi_{\tau}\circ\sigma}{\psi_{\tau}^{2}\circ\sigma}.$
\end{proof}

\begin{slemma}
We use notation \ref{notation-division-polynomial}. Then the two polynomials $\phi_{\sigma}$ and $\psi_{\sigma}^{2}$ have integral coefficients. 
\end{slemma}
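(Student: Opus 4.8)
The plan is to reduce everything to the two already-established facts: Lemma~\ref{integrality-phi-psi} (so that $d_\sigma\in\Z$ and $x\circ\sigma=\phi_\sigma/\psi_\sigma^2$) and the chain rule of Lemma~\ref{chain_rule}. First I would record what is immediate from the definitions in Notation~\ref{notation-division-polynomial}: by construction $\psi_\sigma^2=d_\sigma^2\prod_{T\in\ker(\sigma)}(x-x(T))$ and $\phi_\sigma=\prod_{Q}(x-x(Q))$ where $Q$ runs over the points with $x(\sigma(Q))=0$. Both products are symmetric functions of the $x$-coordinates of the kernel points (resp. of the fibre over $x=0$), and these point sets are stable under $\mathrm{Gal}(\overline\Q/\Q)$ because $\sigma$ and $E$, $E'$ are defined over $\Q$; hence $\psi_\sigma^2$ and $\phi_\sigma$ certainly lie in $\Q[x]$. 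The real content is to upgrade ``$\Q[x]$'' to ``$\Z[x]$''.

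For $\phi_\sigma$ the cleanest route is via the identity $x\circ\sigma=\phi_\sigma/\psi_\sigma^2$ from Lemma~\ref{integrality-phi-psi}. Using the dual isogeny $\widehat\sigma:E\to E'$ with $\widehat\sigma\circ\sigma=[d]$ on $E'$ (here $d=\deg\sigma$), the chain rule (Lemma~\ref{chain_rule}, applied to $\sigma$ then $\widehat\sigma$) gives $\psi_\sigma^{2d}(\psi_{\widehat\sigma}\circ\sigma)^2=\psi_{[d]}^2$ and $\psi_\sigma^{2d}(\phi_{\widehat\sigma}\circ\sigma)=\phi_{[d]}$, and it is classical (e.g. \cite[Chapter III]{aec}, or \cite{Koblitz}) that the multiplication-by-$d$ division polynomials $\psi_{[d]}^2,\phi_{[d]}\in\Z[x]$ on a curve with integral Weierstrass coefficients. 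So the integrality of $\psi_\sigma^2$ would follow once one controls the factor $(\psi_{\widehat\sigma}\circ\sigma)^2$, and similarly for $\phi_\sigma$. Concretely I would argue at each finite place $v$: both $\phi_\sigma,\psi_\sigma^2$ are monic (up to the explicit leading coefficient coming from $d_\sigma^2$) polynomials whose roots are $x$-coordinates of torsion points of $E'$, and for a curve given by an integral Weierstrass model every torsion point has everywhere $v$-integral coordinates unless $v$ is a prime of bad reduction — and even there the standard estimates on formal groups show the relevant symmetric functions are $v$-integral. Summing the $v$-adic valuations, one gets $\phi_\sigma,\psi_\sigma^2\in\Z[x]$.

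The main obstacle is precisely this last point: ruling out denominators at primes of bad reduction. Torsion points need not be integral at such primes individually, so one cannot argue pointwise; instead one must use that $\psi_\sigma^2$ and $\phi_\sigma$ are \emph{full} symmetric products over the kernel (resp. the fibre over $0$), so the non-integral contributions from conjugate torsion points cancel. The technically safest way to see this is the factorisation route above: write $\psi_{[d]}^2=\psi_\sigma^{2d}\cdot(\psi_{\widehat\sigma}\circ\sigma)^2$ in $\Q[x]$ with $\psi_{[d]}^2\in\Z[x]$, note both factors on the right are in $\Q[x]$ with algebraic-integer roots and controlled leading coefficients (the leading coefficients are powers of $d_\sigma,d_{\widehat\sigma}\in\Z$), and invoke a Gauss-lemma argument: a monic-up-to-integer-leading-coefficient factor of an integral polynomial, with the complementary factor also of that shape, is itself integral. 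The same argument applied to $\psi_\sigma^{2d}(\phi_{\widehat\sigma}\circ\sigma)=\phi_{[d]}$ then yields $\phi_\sigma\in\Z[x]$. I would present the $\psi_\sigma^2$ case in full and remark that $\phi_\sigma$ is entirely analogous, or deduce $\phi_\sigma\in\Z[x]$ a posteriori from $\phi_\sigma=(x\circ\sigma)\,\psi_\sigma^2$ together with the fact that $x\circ\sigma$, expressed in the coordinate $x$, has a power series expansion with integral coefficients at the relevant places.
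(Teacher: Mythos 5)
There is a genuine gap at the decisive step. Your reduction to $\psi_{[d]}^{2}=\psi_{\sigma}^{2d}(\psi_{\widehat{\sigma}}\circ\sigma)^{2}$ is fine, but the integrality conclusion rests on two claims that fail. First, the roots of $\psi_{\sigma}^{2}$ (and of the complementary factor) are $x$-coordinates of points of $\ker(\sigma)\subset E'[d]$, i.e.\ of torsion points, and these are \emph{not} algebraic integers in general: on $y^{2}+y=x^{3}$ the nontrivial $2$-torsion satisfies $4x^{3}+1=0$. This is exactly why the normalizing factor $d_{\sigma}^{2}$ appears in the definition and why the lemma has content; your second paragraph concedes the point (``torsion points need not be integral \dots individually''), and the proposed Gauss-lemma fix then silently reinstates the assumption of integral roots (if the roots really were algebraic integers, an integer leading coefficient would give integrality directly, with no Gauss lemma needed). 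Second, the principle you invoke --- an integer-leading-coefficient factor of an integral polynomial, whose cofactor also has integer leading coefficient, is integral --- is false: $6x^{2}+x=(3x+\tfrac12)(2x)$. Gauss's lemma only says the contents multiply, and bounding the denominator of the content of the cofactor is exactly as hard as the original problem, since its roots are again torsion $x$-coordinates. A further slip: the roots of $\phi_{\sigma}$ are not torsion points but the $x$-coordinates of the fibre of $\sigma$ over the points with $x=0$; likewise the claimed integral power-series expansion of $x\circ\sigma$ ``in the coordinate $x$'' is unsubstantiated (the integral formal-group expansion is in the parameter at the origin, not in $x$).

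The paper's proof avoids all of this. For $\phi_{\sigma}$: a point of $E$ with $x$-coordinate $0$ is integral, and any preimage of an integral point under an isogeny is integral (the formal-group fact underlying Lemma~\ref{easy-factorization}), so the roots of the monic, Galois-stable polynomial $\phi_{\sigma}$ are algebraic integers and $\phi_{\sigma}\in\Z[x]$. For $\psi_{\sigma}^{2}$ it invokes the classical generalization of the Nagell--Lutz theorem (Cassels-type control of the denominators of torsion $x$-coordinates), which is precisely the quantitative input, matching the factor $d_{\sigma}^{2}$, that your factorization-plus-Gauss argument does not supply.
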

\begin{proof}
Every point on $E$ with $x$-coordinate $0$ is integral. Any preimage of an integral point under an isogeny is integral. In particular the roots of $\phi_{\sigma}$ are integral. This proves that $\phi_{\sigma}$ has integral coefficients. The integrality of the coefficients of $\psi_{\sigma}^{2}$ is a classical generalization of the Nagell-Lutz theorem.
\end{proof}

\begin{snotation}\label{pullback-isogeny}
We keep the hypotheses of Lemma \ref{chain_rule} and we assume that $\deg (\tau )$ and $\deg (\sigma )$ are coprime. We denote by $\widehat{\sigma}$ the dual isogeny of $\sigma$. Then the restriction of $\widehat{\sigma}$ to $\ker (\tau )$ gives a group isomorphism between $\ker (\tau )$ and a $\textrm{Gal}(\overline{\Q}/\Q )$-invariant subgroup of $E [\deg (\tau )]$. This  subgroup $\widehat{\sigma}(\ker (\tau ))$ is the kernel of an isogeny $\tau_{\sigma} : E\longrightarrow E_{\tau_{\sigma}}$ of degree $\deg (\tau )$ where $E_{\tau_{\sigma}}$ denotes an elliptic curve defined over $\Q$ by a standardized minimal equation. Using $\tau_{\sigma}$ a natural factorization of division polynomials can be deduced from Lemma \ref{chain_rule}. 
\end{snotation}   

\begin{slemma}\label{lemma-divisibility-property1}
We use notation \ref{pullback-isogeny} (in particular $\deg (\sigma )$ and $\deg (\tau )$ are coprime). Then $\psi_{\sigma}^{2}\psi_{\tau_{\sigma}}^{2}$ divides $\psi_{\tau\circ\sigma}^{2}$ in $\Z [x]$.
\end{slemma}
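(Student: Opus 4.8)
The plan is to reduce the statement to the group structure of the kernels involved, translate it into a divisibility of polynomials, and then fight for integrality by hand. First I would pin down the kernels. One always has $\ker(\sigma)\subseteq\ker(\tau\circ\sigma)$, and since $\sigma\circ\widehat\sigma=[\deg\sigma]$ one gets $\sigma(\widehat\sigma(S))=[\deg\sigma]S\in\ker(\tau)$ for every $S\in\ker(\tau)$, so $\ker(\tau_\sigma)=\widehat\sigma(\ker\tau)\subseteq\ker(\tau\circ\sigma)$ as well. Because $\gcd(\deg\sigma,\deg\tau)=1$, the subgroups $\ker\sigma$ and $\ker\tau_\sigma$ have coprime orders, hence meet only in $0$, and a cardinality count ($\deg\sigma\cdot\deg\tau=\deg(\tau\circ\sigma)$) forces the internal direct sum decomposition $\ker(\tau\circ\sigma)=\ker\sigma\oplus\ker\tau_\sigma$ of $\mathrm{Gal}(\overline{\Q}/\Q)$-modules. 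In particular $\ker\tau_\sigma\subseteq\ker(\tau\circ\sigma)$, so $\tau\circ\sigma$ factors over $\Q$ as $\tau\circ\sigma=\rho\circ\tau_\sigma$ with $\rho$ an isogeny of degree $\deg\sigma$.

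Second, I would pass to the division polynomials. By Notation \ref{notation-division-polynomial} each $\psi_\bullet^2$ is, up to the nonzero integer factor $d_\bullet^2$, the product of the $x-x(T)$ over the nonzero points $T$ of the corresponding kernel. A factor $x-x(T_1)$ coming from $\ker\sigma$ and a factor $x-x(T_2)$ coming from $\ker\tau_\sigma$ can coincide only if $T_1=\pm T_2$; as both kernels are groups this puts $T_1$ in $\ker\sigma\cap\ker\tau_\sigma=\{0\}$, which is absurd. So $\psi_\sigma^2$ and $\psi_{\tau_\sigma}^2$ have no common root. Combining this with the direct sum decomposition and comparing the multisets of roots of the three polynomials (exactly as in the proofs of Lemma \ref{integrality-phi-psi} and Lemma \ref{chain_rule}), the product $\psi_\sigma^2\psi_{\tau_\sigma}^2$ divides $\psi_{\tau\circ\sigma}^2$ in $\overline{\Q}[x]$; since all three polynomials lie in $\Q[x]$ (their kernels are Galois-stable) and the two factors are coprime in the principal ideal domain $\Q[x]$, the division already holds in $\Q[x]$.

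Third, I would try to upgrade from $\Q[x]$ to $\Z[x]$. Applying Lemma \ref{chain_rule} to the composite $\tau\circ\sigma$, rewriting $\psi_\tau\circ\sigma$ via $x\circ\sigma=\phi_\sigma/\psi_\sigma^2$ (Lemma \ref{integrality-phi-psi}) and using that $\phi_\sigma$, $\psi_\sigma^2$, $\psi_\tau^2$ have integral coefficients, one clears denominators to obtain $\psi_{\tau\circ\sigma}^2=\psi_\sigma^2\cdot N$ with $N\in\Z[x]$ (the exponents match because $\psi_\alpha^2$ has $x$-degree $\deg\alpha-1$); applying the same argument to the factorization $\tau\circ\sigma=\rho\circ\tau_\sigma$ gives $\psi_{\tau\circ\sigma}^2=\psi_{\tau_\sigma}^2\cdot N'$ with $N'\in\Z[x]$. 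Coprimality in $\Q[x]$ then yields $\psi_{\tau_\sigma}^2\mid N$ in $\Q[x]$, whence $\psi_{\tau\circ\sigma}^2=\psi_\sigma^2\psi_{\tau_\sigma}^2\cdot R$ with $R\in\Q[x]$; what remains is to check that $R\in\Z[x]$.

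This last point is the real obstacle. By Gauss's lemma it is equivalent to the content (Gauss-norm) inequality $\mathrm{cont}_p(\psi_{\tau\circ\sigma}^2)\ge\mathrm{cont}_p(\psi_\sigma^2)+\mathrm{cont}_p(\psi_{\tau_\sigma}^2)$ at every prime $p$, and this is \emph{not} a formal consequence of the two $\Z[x]$-divisibilities obtained above — the toy data $\psi_\sigma^2=2x$, $\psi_{\tau_\sigma}^2=2(x+1)$, $\psi_{\tau\circ\sigma}^2=2x(x+1)$ satisfies both divisibilities yet $\psi_\sigma^2\psi_{\tau_\sigma}^2\nmid\psi_{\tau\circ\sigma}^2$. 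So one must use the arithmetic of the division polynomials: working prime by prime over $\Z_{(p)}$, one would use the explicit form $N=d_\tau^2\prod_{S\in\ker(\tau)\setminus\{0\}}\bigl(\phi_\sigma-x(S)\psi_\sigma^2\bigr)$ coming from the chain rule, together with $\gcd(\deg\sigma,\deg\tau)=1$, to control the $p$-adic valuations: the roots $x(S)$ of $\psi_\tau^2$ are $p$-integral outside the primes dividing $\deg\tau$, and since $p$ can divide at most one of $\deg\sigma$, $\deg\tau$, the $p$-adically "bad" contributions to $\psi_\sigma^2$ and to $\psi_{\tau_\sigma}^2$ come from disjoint parts of the torsion and therefore accumulate correctly in $\psi_{\tau\circ\sigma}^2$. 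Turning this into a rigorous valuation estimate — presumably through the reduction of isogenies and formal groups at bad primes, in the spirit of the Nagell–Lutz argument already invoked for the integrality of $\psi_\sigma^2$ — is where the main work of the proof lies.
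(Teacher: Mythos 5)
Your first two steps coincide with the paper's: the kernel decomposition $\ker(\tau\circ\sigma)=\ker\sigma+\widehat{\sigma}(\ker\tau)=\ker\sigma\oplus\ker\tau_{\sigma}$, the divisibility of $\psi_{\tau\circ\sigma}^{2}$ by each of $\psi_{\sigma}^{2}$ and $\psi_{\tau_{\sigma}}^{2}$ via Lemma \ref{chain_rule}, and the coprimality of $\psi_{\sigma}^{2}$ and $\psi_{\tau_{\sigma}}^{2}$ in $\Q[x]$ from $\ker\sigma\cap\ker\tau_{\sigma}=\{0_{E}\}$. You are also right, and commendably explicit (your toy example $2x$, $2(x+1)$, $2x(x+1)$ makes the point), that the two $\Z[x]$-divisibilities plus coprimality in $\Q[x]$ do not formally give divisibility of the product in $\Z[x]$.

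But at exactly that point your proposal stops: the passage from $\Q[x]$ to $\Z[x]$ is announced as "where the main work lies" and is not carried out, and the remedy you sketch (prime-by-prime content estimates via formal groups and reduction of isogenies) is not the mechanism that makes the statement true. The missing idea is much more direct. The quotient $\psi_{\tau\circ\sigma}^{2}/(\psi_{\sigma}^{2}\psi_{\tau_{\sigma}}^{2})$ is, up to its leading coefficient, $\prod\left(x-x(T)\right)$ taken over the \emph{mixed} torsion points $T=T_{1}+T_{2}$ with $T_{1}\in\ker\sigma\setminus\{0\}$ and $T_{2}\in\ker\tau_{\sigma}\setminus\{0\}$. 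Because $\gcd(\deg\sigma,\deg\tau)=1$, such a $T$ has order $\mathrm{ord}(T_{1})\,\mathrm{ord}(T_{2})$ with the two factors coprime and both $>1$, so its order is \emph{not} a prime power; Cassels' sharpening of the Nagell--Lutz theorem (the same statement already invoked for the integrality of $\psi_{\sigma}^{2}$) then says that $x(T)$ is an algebraic integer on the integral Weierstrass model. Since the set of these $T$ is $\mathrm{Gal}(\overline{\Q}/\Q)$-stable, the quotient has integral coefficients (the leading coefficient being handled by the integrality and multiplicativity of the constants $d_{\bullet}$ from Lemma \ref{integrality-phi-psi}). In other words, the coprimality hypothesis on the degrees is used a second time, arithmetically and not just group-theoretically, precisely to guarantee that every root of the offending quotient is integral; without supplying this (or an equivalent valuation argument), your proof is incomplete at its decisive step.
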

\begin{proof}Denote by $\widehat{\sigma}$ the dual isogeny of $\sigma$ and by $\widehat{\tau_{\sigma}}$ the dual isogeny of $\tau_{\sigma}$. Then we have $\ker (\tau\circ\sigma ) = \ker (\sigma ) + \widehat{\sigma}(\ker (\tau )) = \ker (\sigma_{\widehat{\tau_{\sigma}}}\circ\tau_{\sigma}) .$
In particular the definition of division polynomials implies that $\psi_{\tau\circ\sigma} ^{2} = \psi_{\sigma_{\widehat{\tau_{\sigma}}}\circ\tau_{\sigma}} ^{2}.$ Applying Lemma \ref{chain_rule}, we get that  $\psi_{\tau\circ\sigma} ^{2}$ is divisible in $\Q [x]$ by $\psi_{\sigma}^{2}$ and by $\psi_{\tau_{\sigma}}^{2}$. 
The two polynomials $\psi_{\sigma}^{2}$ and $\psi_{\tau_{\sigma}}^{2}$ are coprime because $\ker (\sigma )\cap\ker (\tau_{\sigma})=\{ 0_{E}\} .$ We conclude using two consequences of Cassel's statement for Nagell-Lutz theorem :
\begin{itemize}
\item the quotient $\frac{\psi_{\tau\circ [m]}^{2}}{\psi_{\tau}^{2}\psi_{m}^{2}}$ is an element of $\Z [x]$ because $x(T)$ is an algebraic integer for every $T\in\ker (\sigma ) + \widehat{\sigma}(\ker (\tau ))$ that does not belong to $\ker (\tau )$ or $\widehat{\sigma}(\ker (\tau ))$;
\item the polynomials $\psi_{\sigma}^{2}$ and $\psi_{\tau_{\sigma}}^{2}$ belong also to $\Z [x]$ (see Lemma \ref{integrality-phi-psi}).  
\end{itemize}
\end{proof}

\subsection{Division polynomials and elliptic divisibility sequences.}\label{ssection-Thue-equation}

Elliptic divisibility sequences are closely related to evaluations of division polynomials (see \cite{Ayad,Ward}). For points with good reduction everywhere this link is quite simple.
\begin{stheorem}[Ayad]\label{theoremA-Ayad}Let $v$ be a place of $\Q$. Let $P\in E(\Q)$ be a point on $E$ whose reduction at $v$ is not the reduction at $v$ of $0_{E}$. Then the following assertions are equivalent
\begin{enumerate}
\item the reduction of $P$ at $v$ is a singular point;
\item there is an integer $m$ such that $v(\psi_{m}(P))>0$ and $v(\phi_{m}(P))>0$;
\item for every integer $n$, we have $v(\psi_{n}(P))>0$ and $v(\phi_{n}(P))>0$.
\end{enumerate}
\end{stheorem}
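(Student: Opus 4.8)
Working over $\Q_v$, with residue field $k$ and reduced curve $\tilde E/k$, the plan is to prove the cycle $(3)\Rightarrow(2)\Rightarrow(1)\Rightarrow(3)$. I would use throughout the polynomials $\psi_n^2,\phi_n\in\Z[a_1,\dots,a_6][x]$ attached to $[n]$ (so that $x([n]P)=\phi_n(P)/\psi_n(P)^2$, which is the case $\sigma=[n]$ of Lemma~\ref{integrality-phi-psi}), the classical identity $\phi_n=x\psi_n^2-\psi_{n+1}\psi_{n-1}$, and the fact that, the defining recursions being universal in the Weierstrass coefficients, the reduction mod $v$ of $\psi_n^2$ (resp.\ of $\phi_n$) is $(\psi_n^{\tilde E})^2$ (resp.\ $\phi_n^{\tilde E}$), where $\psi_n^{\tilde E},\phi_n^{\tilde E}$ denote the division polynomials of $\tilde E$. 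Since $\overline P\neq\overline{0_E}$ forces $x(P),y(P)\in\Z_v$, this gives, for each $n$, that $v(\psi_n(P))>0\iff\psi_n^{\tilde E}$ vanishes at $\overline P$, and $v(\phi_n(P))>0\iff\phi_n^{\tilde E}$ vanishes at $\overline P$. The implication $(3)\Rightarrow(2)$ is then immediate (take $n=2$); here and below $n$ is understood to range over integers $\geq2$, since $\psi_1=1$.

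For $(1)\Rightarrow(3)$ I would first translate the integral Weierstrass model by a vector of $\Z_v^2$ so that the (necessarily $k$-rational) singular point of $\tilde E$ becomes the origin; this is a change of variables of ``scaling factor $1$'', under which $\psi_n$ is unchanged and $\phi_n$ only acquires a $\Z_v$-multiple of $\psi_n^2$, so no information is lost. In the translated model the conditions ``the origin lies on $\tilde E$ and $\tilde E$ is singular there'' become $v(a_3),v(a_4),v(a_6)>0$, while $\overline P=(0,0)$ gives $v(x(P)),v(y(P))>0$. Hence it suffices to check that, for every $n\geq2$, the polynomials $\psi_n$ and $\phi_n$, regarded as elements of $\Z[a_1,\dots,a_6,x,y]$, lie in the ideal $J:=(x,y,a_3,a_4,a_6)$; then evaluating at the point and the curve gives $v(\psi_n(P))>0$ and $v(\phi_n(P))>0$. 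That $\psi_n\in J$ is an easy induction: it holds for $\psi_2=2y+a_1x+a_3$ and for $\psi_3$ (whose value at $x=0$ is $b_8\in J$), it propagates along $\psi_{2k+1}=\psi_{k+2}\psi_k^3-\psi_{k-1}\psi_{k+1}^3$ and along the divisibility $\psi_2\mid\psi_{2k}$, and then $\phi_n=x\psi_n^2-\psi_{n+1}\psi_{n-1}\in J$ because $\psi_{n+1}\in J$ for $n\geq2$.

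For $(2)\Rightarrow(1)$ I would argue by contraposition: assume $\overline P$ is a nonsingular point, hence an affine point of $\tilde E_{ns}(k)\setminus\{0_E\}$, so that $P\in E_0(\Q_v)$, and suppose $v(\psi_n(P))>0$ and $v(\phi_n(P))>0$ for some $n$, i.e.\ both $\psi_n^{\tilde E}$ and $\phi_n^{\tilde E}$ vanish at $\overline P$. On the smooth group $\tilde E_{ns}$ the universal relation $x([n]R)=\phi_n^{\tilde E}(R)/\psi_n^{\tilde E}(R)^2$ holds; its poles on $\tilde E_{ns}\setminus\{0_E\}$ lie over $[n]^{-1}(0_E)$ and its zeros over $[n]^{-1}\!\bigl(\tilde E_{ns}\cap\{x=0\}\bigr)$, two disjoint sets because $0_E\notin\{x=0\}$; consequently $\psi_n^{\tilde E}$ and $\phi_n^{\tilde E}$ have no common zero at a nonsingular point other than the singular point, and as $\overline P$ is nonsingular this contradicts the hypothesis. (Equivalently, and avoiding divisors: $\psi_n^{\tilde E}(\overline P)=0$ forces $[n]\overline P=0_E$, then $\phi_n=x\psi_n^2-\psi_{n+1}\psi_{n-1}$ forces $[n\pm1]\overline P=0_E$ for one sign, whence $\overline P=[n\pm1]\overline P-[n]\overline P=0_E$, impossible.) Thus no such $n$ exists, i.e.\ $\neg(2)$.

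The step I expect to be the main obstacle is making the $(2)\Rightarrow(1)$ argument — and, similarly, the ideal membership used in $(1)\Rightarrow(3)$ — rigorous in full generality: the behaviour of $\psi_n^{\tilde E}$ and $\phi_n^{\tilde E}$ when $\tilde E$ is singular (bad reduction at $v$, yet $P$ reducing to a nonsingular point) and when the residue characteristic of $v$ divides $n$ (so that $\tilde E[n]$ is non-reduced and $\psi_n^{\tilde E}$ may drop degree, or even vanish identically). Establishing that, away from the singular point, the zero locus of $\psi_n^{\tilde E}$ still detects torsion of $\tilde E_{ns}$, that $\phi_n^{\tilde E}$ vanishes off $\tilde E_{ns}$ only at the singular point, and that $\psi_n^{\tilde E}$ and $\phi_n^{\tilde E}$ both vanish at the singular point for all $n\geq2$, is where the real work lies; the good-reduction case and the case of $n$ prime to the residue characteristic reduce, as above, to the standard torsion-detection property of the division polynomials of a smooth Weierstrass curve.
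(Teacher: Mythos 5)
The paper contains no proof of Theorem \ref{theoremA-Ayad}: it is quoted as a known result from Ayad's article \cite{Ayad}, so there is no internal argument to compare yours with, and your proposal has to stand on its own as a proof of Ayad's theorem. The easy parts do stand: (3)$\Rightarrow$(2) is immediate, and your (1)$\Rightarrow$(3) is correct as sketched --- after translating the singular point of the reduction to the origin, the membership of $\psi_{n}$ and $\phi_{n}$ ($n\ge 2$) in the ideal $J=(x,y,a_{3},a_{4},a_{6})$ goes through by the induction you indicate (and since the Weierstrass polynomial itself lies in $J$, reducing modulo the curve relation in the recursions is harmless).

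The genuine gap is (2)$\Rightarrow$(1), and you have flagged it yourself. The divisor argument proves nothing as written: disjointness of the zeros and poles of the rational function $x\circ [n]=\phi_{n}^{\tilde E}/(\psi_{n}^{\tilde E})^{2}$ is perfectly compatible with the chosen numerator and denominator sharing a zero at which the function is regular and nonzero; excluding common zeros is precisely the coprimality statement to be proved. For good reduction that coprimality is classical in every characteristic (via $\deg [n]=n^{2}=\deg_{x}\phi_{n}$ with $\phi_{n}$ monic), and then your parenthetical argument closes that case, including $p\mid n$. But for bad reduction global coprimality is simply false: on a singular cubic $\phi_{n}^{\tilde E}$ and $(\psi_{n}^{\tilde E})^{2}$ do have common zeros, located at the singular point --- this is exactly what your (1)$\Rightarrow$(3) exhibits; for instance on $y^{2}+xy=x^{3}$ one has $\psi_{3}=x^{3}(3x+1)$, and the duplication fraction $x^{4}/\bigl(x^{2}(4x+1)\bigr)$ drops from degree $4$ to degree $2$. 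Since multiplication by $n$ on $\mathbb{G}_{m}$ (or $\mathbb{G}_{a}$) has degree $n$ (or less), the common factor has degree at least $n^{2}-n$, and the whole content of the implication is that this cancellation is supported entirely at the singular point. Your fallback step ``$\psi_{n}^{\tilde E}(\overline P)=0\Rightarrow [n]\overline P=0$'' presupposes exactly this localized coprimality in the only situation at issue (namely when $\phi_{n}^{\tilde E}(\overline P)=0$ as well), so it is circular, and the same difficulty recurs when the residue characteristic divides $n$. That localization --- carried out by Ayad via minimal forms and a local analysis at the places of bad reduction --- is the real work, and your proposal leaves it open, as you acknowledge in your closing paragraph.
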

Ayad's theorem does not predict the valuation $v(\psi_{m}(P))$ when $P$ has bad reduction at $v$. 
In \cite{cheon-hahn} the valuations $v(\psi_{m}(P))$ and $v(\phi_{m}(P))$ are studied in terms of the smallest positive integer $N_{P,v}$ such that $N_{P,v}P$ has good reduction at $v$. This integer $N_{P,v}$ can be easily computed using Tate's algorithm (see \cite{advance}). However the computation of an explicit uniform upper bound on the number of prime power terms in magnified elliptic divisibility sequences requires an estimation for $\frac{B_{P}^{2\deg (\sigma )}\psi_{\sigma}(P)^{2}}{B_{\sigma (P)}^{2}}$ that does not depend on $N_{P,v}$. Such an estimation can be obtained from a comparison between naive local heights and their associated canonical local heights. 

\begin{slemma}\label{Parallelogram-law-version-isogeny}
We use notation \ref{notation-division-polynomial}. Then for every $P'\in E'(\Q )$ we have 
\begin{equation}\label{equation-can-height-mP}
\begin{array}{c}
\log|\psi_{\sigma} (P')|=\deg (\sigma )\widehat{h_{\infty}}(P')-\widehat{h_{\infty}}(\sigma (P')) + \frac{\deg (\sigma )\log |\Delta_{E'}|-\log |\Delta_{E}|}{12}.\\
\end{array}
\end{equation}
\end{slemma}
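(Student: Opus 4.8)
The plan is to prove the formula by decomposing both sides into local contributions and matching them place by place, using the defining properties of the canonical local heights. First I would recall the standard decomposition of the canonical height into canonical local heights: for a point $R$ on an elliptic curve $\mathcal{E}/\Q$ given by a minimal Weierstrass equation one has $\widehat{h}(R)=\sum_{v}\widehat{h_{v}}(R)$, and the archimedean term is exactly $\widehat{h_{\infty}}(R)$. The key input is the transformation law relating the canonical local height to the naive local height, namely $\widehat{h_{v}}(R)=h_{v}(R)+\tfrac{1}{12}v(\Delta_{\mathcal{E}})$ (up to the normalization conventions of \cite{advance}), where at $v=\infty$ the naive archimedean local height of $R=(a/b^{2},c/b^{3})$ is essentially $\tfrac12\log\max(|a|,|b|^2)$ minus a bounded Green's-function correction; the precise shape does not matter because everything will telescope.

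Next I would apply V\'elu's formula in the form of Lemma \ref{integrality-phi-psi}, $x(\sigma(P'))=\phi_{\sigma}(P')/\psi_{\sigma}(P')^{2}$, together with the fact that $\phi_{\sigma}$ and $\psi_{\sigma}^{2}$ have integral coefficients and are coprime as polynomials (their roots are $x$-coordinates of disjoint sets of points). Comparing denominators of $x(\sigma(P'))$ computed two ways gives the divisor-theoretic identity $B_{\sigma(P')}^{2}=\pm B_{P'}^{2\deg(\sigma)}\psi_{\sigma}(P')^{2}/(\text{content corrections})$; more usefully, for every finite place $v$ one reads off $v(\psi_{\sigma}(P'))$ in terms of $v(B_{P'})$ and $v(B_{\sigma(P')})$. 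Summing $\log|\psi_{\sigma}(P')|=-\sum_{v\ \mathrm{finite}}v(\psi_{\sigma}(P'))\log p_{v}$ over all finite places and substituting, the finite-place contributions reassemble into $\deg(\sigma)$ times the non-archimedean part of $\widehat{h}(P')$ minus the non-archimedean part of $\widehat{h}(\sigma(P'))$, up to the discriminant terms coming from the normalization $\widehat{h_{v}}=h_{v}+\tfrac1{12}v(\Delta)$ applied on each curve; these produce precisely $\tfrac{1}{12}(\deg(\sigma)\log|\Delta_{E'}|-\log|\Delta_{E}|)$.

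Finally I would invoke the functoriality of the canonical height under isogeny, $\widehat{h}(\sigma(P'))=\deg(\sigma)\,\widehat{h}(P')$, which holds because $\sigma$ is a morphism of degree $\deg(\sigma)$ and the canonical height is a quadratic form pulling back correctly. Subtracting the archimedean identity $\widehat{h_{\infty}}(\sigma(P'))=\deg(\sigma)\widehat{h_{\infty}}(P')-\log|\psi_{\sigma}(P')|-\tfrac{1}{12}(\deg(\sigma)\log|\Delta_{E'}|-\log|\Delta_{E}|)$ (which is what the place-by-place bookkeeping yields for $v=\infty$) from the global relation $\widehat{h}(\sigma(P'))=\deg(\sigma)\widehat{h}(P')$ then forces consistency, and rearranging the archimedean identity gives Equation (\ref{equation-can-height-mP}) directly. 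The main obstacle I anticipate is purely bookkeeping: keeping the normalization of the local heights, the factor conventions for $\log|\Delta|$, and the sign/content corrections in V\'elu's formula all consistent with \cite{advance}, so that the stray constants genuinely collapse to $\tfrac{1}{12}(\deg(\sigma)\log|\Delta_{E'}|-\log|\Delta_{E}|)$ rather than some other multiple; isolating the archimedean place and checking that the bounded Green's-function corrections cancel in the isogeny relation is the delicate point, but it follows formally once one writes $\widehat{h}_\infty=\lim 4^{-n}h_\infty([2^n]\cdot)$ and uses $x\circ\sigma$'s compatibility with the group law.
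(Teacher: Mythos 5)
Your strategy---prove an exact identity at every finite place and then subtract from $\widehat{h}(\sigma(P'))=\deg(\sigma)\widehat{h}(P')$---could in principle work, but the step you rely on to get the finite-place identity is exactly where it breaks. You claim that from $x(\sigma(P'))=\phi_{\sigma}(P')/\psi_{\sigma}(P')^{2}$ and the coprimality of the \emph{polynomials} $\phi_{\sigma}$ and $\psi_{\sigma}^{2}$ one can read off $v(\psi_{\sigma}(P'))$ from $v(B_{P'})$ and $v(B_{\sigma(P')})$, with only ``normalization'' discriminant corrections that telescope. That is true at places of good reduction, where $\widehat{h_{v}}(Q)=v(B_{Q})+\frac{1}{12}v(\Delta)$ as in Equation (\ref{hauteur-locale-can-bonne-red}), but it fails at primes of bad reduction. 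Coprimality of the polynomials does not give coprimality of their \emph{values}: their resultant is supported on $\Delta_{E'}$, and Theorem \ref{theoremA-Ayad} says precisely that when $P'$ reduces to a singular point the values $\psi_{m}(P')$ and $\phi_{m}(P')$ share a common factor, whose valuation is controlled only via the quantity $N_{P',v}$ of Cheon--Hahn, not via $v(B_{P'})$ and $v(B_{\sigma(P')})$. Likewise $\widehat{h_{v}}(Q)-v(B_{Q})$ is \emph{not} equal to $\frac{1}{12}v(\Delta)$ at bad places; only the two-sided bounds from Lang hold there, which is exactly why Proposition \ref{proposition-link-EDS-division-polynomial}(b) is an inequality with a loss of $\frac{3}{2}\deg(\sigma)h(E')$ rather than an equality. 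So ``the precise shape does not matter because everything will telescope'' is not a harmless bookkeeping remark: the bad-reduction places are the crux, and your argument as written does not control them. To salvage the local route you would need the exact transformation law of canonical local heights under an isogeny at each bad place (depending on reduction type and component groups), which your proposal does not supply; once you have that, subtracting from $\widehat{h}(\sigma(P'))=\deg(\sigma)\widehat{h}(P')$ does give the lemma.

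For comparison, the paper never touches the finite places: it works purely at the archimedean place, using the Everest--Ward limit formula $\widehat{h_{\infty}}(Q)=\lim_{n}\frac{\log|\psi_{n}(Q)|}{n^{2}}-\frac{1}{12}\log|\Delta|$ together with the quasi-parallelogram law for $\widehat{h_{\infty}}$, first establishing the case $\sigma=[n]$ recursively and then treating a general $\sigma$ via the composition rule of Lemma \ref{chain_rule} applied to $\psi_{\sigma\circ[n]}$ and a passage to the limit. That route sidesteps all bad-reduction subtleties, which is why it is the one taken.
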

\begin{proof}The proof is based  on \cite[Theorem 6.18]{Everest-Ward-height-entropy} which states that 
$$\widehat{h_{\infty}}(Q) =\displaystyle\lim_{n\rightarrow +\infty}\frac{\log|\psi_{n}(Q))|}{n^{2}} -\frac{1}{12}\log |\Delta_{E}|$$
for any $Q\in E(\Q )$, and the quasiparallelogram law for $\widehat{h_{\infty}}$ which asserts that 
$$\widehat{h_{\infty}}(P+Q)+\widehat{h_{\infty}}(P-Q)=2\widehat{h_{\infty}}(P)+2\widehat{h_{\infty}}(Q)- \log |x(P)-x(Q)|+\frac{1}{6}\log |\Delta_{E}|$$
for every $P,Q\in E(\Q)$ such that $P,Q,P\pm Q\neq 0_{E}.$

When $\sigma =[n]$ Equation (\ref {equation-can-height-mP}) is proven recursively using the quasiparallelogram law for $\widehat{h_{\infty}}$ and the equality $x([n]P)=x(P)-\frac{\psi_{n+1}(P)\psi_{n-1}(P)}{\psi_{n}(P)^{2}}$. This particular case for Equation (\ref {equation-can-height-mP}) and \cite[Theorem 6.18]{Everest-Ward-height-entropy} implies that $\displaystyle\lim_{n\rightarrow +\infty}\frac{\widehat{h_{\infty}}([n]Q)}{n^{2}} = 0 $ for any $Q\in E'(\Q ).$ Hence the quasiparallelogram law for $\widehat{h_{\infty}}$ implies that 
\begin{equation}\label{limite-h_infty-NP-over-Nsquare}
\displaystyle\lim_{n\rightarrow +\infty}\displaystyle\sum_{T\in\ker (\sigma ), T\neq 0}\frac{\log|x([n]P')-x([n]T)|}{n^{2}}=0.
\end{equation}
Applying \cite[Theorem 6.18]{Everest-Ward-height-entropy} to $\sigma (P')$ together with Lemma \ref{chain_rule} we get 
$$\widehat{h_{\infty}}(\sigma (P')) =\displaystyle\lim_{n\rightarrow +\infty}\frac{\log|\psi_{\sigma} ([n]P')\psi_{n}(P')^{\deg (\sigma )}\psi_{\sigma}(P')^{-n^{2}})|}{n^{2}} -\frac{1}{12}\log |\Delta_{E}|.$$ 
From this equality we deduce Equation (\ref{equation-can-height-mP}) in the general case noting that
$$\displaystyle\lim_{n\rightarrow +\infty}\frac{\log|\psi_{\sigma} ([n]P')|}{n^{2}} =\displaystyle\lim_{n\rightarrow +\infty}\displaystyle\sum_{T\in\ker (\sigma ), T\neq 0}\frac{\log|x([n]P')-x([n]T)|}{n^{2}}=0$$
and using Equation (\ref{limite-h_infty-NP-over-Nsquare}) and \cite[Theorem 6.18]{Everest-Ward-height-entropy} (applied to $P'$).
\end{proof}

\begin{sproposition}\label{proposition-link-EDS-division-polynomial}
We use notation \ref{notation-division-polynomial}. 
\begin{enumerate}
\item If $P'$ has good reduction everywhere, then
\begin{equation}\label{equation-comapraison-EDS-div-pol-good-red}
B_{\sigma (P')} = B_{P'}^{\deg (\sigma )}\psi_{\sigma}(P').
\end{equation}
\item In the general case, the quotient $\frac{B_{P}^{\deg (\sigma )}\psi_{\sigma}(P')}{B_{\sigma (P')}}$ satisfies the inequations 
\begin{equation}\label{equation-comapraison-EDS-div-pol}
\log\left| B_{\sigma (P')}\right|\le\log\left|B_{P'}^{\deg (\sigma )}\psi_{\sigma}(P')\right|\le\log\left| B_{\sigma (P')}\right| +\frac{3}{2}\deg (\sigma ) h(E') .
\end{equation}
\end{enumerate}
\end{sproposition}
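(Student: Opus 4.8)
The plan is to extract an explicit formula for the ratio $B_{P'}^{\deg(\sigma)}\psi_\sigma(P')/B_{\sigma(P')}$ and to read both assertions off it. By Lemma~\ref{integrality-phi-psi}, $x(\sigma(P'))=\phi_\sigma(P')/\psi_\sigma(P')^2$, where $\phi_\sigma$ and $\psi_\sigma^2$ are integral polynomials in $x$ and $\phi_\sigma$ is monic of degree $\deg(\sigma)$. Writing $x(P')=A_{P'}/B_{P'}^2$ in lowest terms and clearing denominators gives $x(\sigma(P'))=\widetilde\phi_\sigma(A_{P'},B_{P'}^2)/\bigl(B_{P'}^2\,\widetilde\psi_\sigma(A_{P'},B_{P'}^2)\bigr)$, where $\widetilde\phi_\sigma,\widetilde\psi_\sigma\in\Z[X,Y]$ are the homogenisations of $\phi_\sigma$ and $\psi_\sigma^2$; since $\phi_\sigma$ is monic, $\widetilde\phi_\sigma(A_{P'},B_{P'}^2)\equiv A_{P'}^{\deg(\sigma)}\pmod{B_{P'}^2}$ is prime to $B_{P'}$. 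Comparing with $x(\sigma(P'))=A_{\sigma(P')}/B_{\sigma(P')}^2$ in lowest terms yields $B_{\sigma(P')}^2=B_{P'}^2\,\widetilde\psi_\sigma(A_{P'},B_{P'}^2)/g$ with $g:=\gcd\bigl(\widetilde\phi_\sigma(A_{P'},B_{P'}^2),\widetilde\psi_\sigma(A_{P'},B_{P'}^2)\bigr)\in\N$. As $\widetilde\psi_\sigma(A_{P'},B_{P'}^2)=B_{P'}^{2(\deg(\sigma)-1)}\psi_\sigma(P')^2$ (the polynomial $\psi_\sigma^2$ evaluated at $x(P')$ is $\psi_\sigma(P')^2$, whatever the parity of $\deg(\sigma)$), this rewrites as $g=\bigl(B_{P'}^{\deg(\sigma)}\psi_\sigma(P')/B_{\sigma(P')}\bigr)^2\in\N$. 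In particular $g\ge 1$, which is the left-hand inequality in~(b), and part~(a) reduces to proving $g=1$ when $P'$ has good reduction everywhere.

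To establish $g=1$ in that case, let $p\mid g$. If $p\mid B_{P'}$ then $v_p\bigl(\widetilde\phi_\sigma(A_{P'},B_{P'}^2)\bigr)=0$ (because $\widetilde\phi_\sigma(A_{P'},B_{P'}^2)\equiv A_{P'}^{\deg(\sigma)}\not\equiv 0\pmod p$), contradicting $p\mid g$; hence $p\nmid B_{P'}$, and then $p\mid g$ is equivalent to $v_p(\phi_\sigma(P'))>0$ and $v_p(\psi_\sigma(P'))>0$. Applying Lemma~\ref{chain_rule} to $\sigma$ and its dual $\widehat{\sigma}$, so that $\widehat{\sigma}\circ\sigma=[\deg(\sigma)]$ and $\deg(\widehat{\sigma})=\deg(\sigma)$, one gets $\psi_\sigma^{2\deg(\sigma)}(\phi_{\widehat{\sigma}}\circ\sigma)=\phi_{[\deg(\sigma)]}$ and $\psi_\sigma^{2\deg(\sigma)}(\psi_{\widehat{\sigma}}\circ\sigma)^2=\psi_{[\deg(\sigma)]}^2$. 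Evaluating at $P'$ and keeping track of the leading coefficients of $\phi_{\widehat{\sigma}}$ and $\psi_{\widehat{\sigma}}^2$ provided by Lemma~\ref{integrality-phi-psi}, the positivity of $v_p$ propagates to $v_p(\phi_{[\deg(\sigma)]}(P'))>0$ and $v_p(\psi_{[\deg(\sigma)]}(P'))>0$ (a short extra argument, using the trivial bound $2v_p(\deg(\sigma))<\deg(\sigma)+1$, covers the primes dividing $\deg(\sigma)$ and the case where $\sigma(P')$ is not $p$-integral). Since these are the division polynomials of multiplication by $\deg(\sigma)$, this contradicts Ayad's Theorem~\ref{theoremA-Ayad}, the reduction of $P'$ at $p$ being nonsingular. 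Hence $g=1$, which proves~(\ref{equation-comapraison-EDS-div-pol-good-red}).

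For the right-hand inequality in~(b), I would bound $\tfrac12\log g=\deg(\sigma)\log|B_{P'}|+\log|\psi_\sigma(P')|-\log|B_{\sigma(P')}|$ by substituting the archimedean identity of Lemma~\ref{Parallelogram-law-version-isogeny} for $\log|\psi_\sigma(P')|$, writing each $\log|B_Q|$ as the sum $\sum_{v\ \mathrm{finite}}h_v(Q)$ of naive local heights, and using the functoriality $\widehat{h}(\sigma(P'))=\deg(\sigma)\widehat{h}(P')$. Up to the normalisation relating $\widehat{h}$ to the local heights, this rewrites $\tfrac12\log g$ as
\begin{equation*}
\frac{\deg(\sigma)\log|\Delta_{E'}|-\log|\Delta_E|}{12}+\deg(\sigma)\sum_{v}\bigl(h_v(P')-\widehat{h_v}(P')\bigr)-\sum_{v}\bigl(h_v(\sigma(P'))-\widehat{h_v}(\sigma(P'))\bigr),
\end{equation*}
the last two sums being over the finitely many places of bad reduction of $E'$, resp. $E$ (elsewhere $h_v=\widehat{h_v}$). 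Bounding each local discrepancy by the appropriate multiple of $v_p(\Delta)\log p$, summing, combining the $E$-terms with $-\tfrac1{12}\log|\Delta_E|$, and using $\log|\Delta_{E'}|\le 12\,h(E')$, one is left with $\tfrac12\log g\le\tfrac32\deg(\sigma)\,h(E')$, that is~(\ref{equation-comapraison-EDS-div-pol}).

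The algebraic part above — extracting $g$, and reducing~(a) to Ayad's theorem — is routine once Lemmas~\ref{integrality-phi-psi} and~\ref{chain_rule} and the integrality of $\phi_\sigma,\psi_\sigma^2$ are available. I expect the main obstacle to be the last step of~(b): recovering exactly the constant $\tfrac32$ requires the sharp form of the difference between naive and canonical local heights at places of bad reduction (the estimate alluded to in the paragraph preceding the Proposition), together with the standard comparisons between the minimal discriminant and height of the isogenous curve $E$ and those of $E'$.
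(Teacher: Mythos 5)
Your proof is correct, but it reaches the proposition by a partly different route from the paper, and the two are worth contrasting. The paper derives both parts from a single height computation: it rewrites the archimedean identity of Lemma~\ref{Parallelogram-law-version-isogeny}, using $\widehat{h}(\sigma(P'))=\deg(\sigma)\widehat{h}(P')$ and the decomposition into local heights, as
$\log|\psi_{\sigma}(P')|=\frac{\deg(\sigma)\log|\Delta_{E'}|-\log|\Delta_{E}|}{12}+\sum_{v\,\textrm{finite}}\bigl(\widehat{h_{v}}(\sigma(P'))-\deg(\sigma)\widehat{h_{v}}(P')\bigr)$;
part~(a) then follows from the exact formula $\widehat{h_{v}}(Q)=v(B_{Q})+\frac{1}{12}v(\Delta)$ at places where the point has good reduction, and part~(b) from replacing that equality by Lang's two-sided estimate $\frac{1}{24}\min(0,v(j))\le\widehat{h_{v}}(Q)-v(B_{Q})\le\frac{1}{12}v(\Delta)$. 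Your treatment of the upper bound in~(b) is exactly this argument, with the same bookkeeping producing the constant $\frac{3}{2}$. Where you diverge is in~(a) and in the left-hand inequality of~(b): instead of local heights you introduce the integer $g=\gcd\bigl(\widetilde\phi_{\sigma}(A_{P'},B_{P'}^{2}),\widetilde\psi_{\sigma}(A_{P'},B_{P'}^{2})\bigr)$, identify it with the square of the quotient in question, and rule out prime divisors of $g$ under good reduction via Lemma~\ref{chain_rule} applied to $\widehat{\sigma}\circ\sigma=[\deg(\sigma)]$ together with Ayad's Theorem~\ref{theoremA-Ayad}. This is very much in the spirit of the paper's own proof of Theorem~\ref{Introduction-Theorem-Thue}, where Ayad's theorem is invoked for the same quotient; and it buys you the left-hand inequality for free as $g\ge 1$, whereas a literal ``replace the equality by Lang's inequality'' reading of the paper's proof only yields a lower bound with a negative defect, so your integrality argument is arguably the cleaner way to that half of~(b). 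The paper's method, on its side, is shorter once Lemma~\ref{Parallelogram-law-version-isogeny} is available and handles (a) and (b) uniformly.

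The one step you leave vague, the ``short extra argument'' when $\sigma(P')$ is not $p$-integral or $p\mid\deg(\sigma)$, does close, and more simply than your parenthesis suggests: write $m=\deg(\sigma)$ and $b=v_{p}(B_{\sigma(P')})\ge 0$. Since $\phi_{\widehat{\sigma}}$ and $\psi_{\widehat{\sigma}}^{2}$ lie in $\Z[x]$ and have degrees $m$ and $m-1$, one has $v_{p}\bigl(\phi_{\widehat{\sigma}}(\sigma(P'))\bigr)\ge -2bm$ and $v_{p}\bigl(\psi_{\widehat{\sigma}}(\sigma(P'))^{2}\bigr)\ge -2b(m-1)$, while $p\mid g$ and $p\nmid B_{P'}$ give $v_{p}\bigl(\psi_{\sigma}(P')^{2}\bigr)\ge 1+2b$ (for $b>0$ use $v_{p}(x(\sigma(P')))=v_{p}(\phi_{\sigma}(P'))-v_{p}(\psi_{\sigma}(P')^{2})=-2b$ and $v_{p}(\phi_{\sigma}(P'))\ge 1$). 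Lemma~\ref{chain_rule} then gives $v_{p}(\phi_{m}(P'))\ge m(1+2b)-2bm=m>0$ and $v_{p}(\psi_{m}(P')^{2})\ge m(1+2b)-2b(m-1)=m+2b>0$, contradicting Theorem~\ref{theoremA-Ayad} applied to $E'$ at $p$; no bound on $v_{p}(\deg(\sigma))$ and no control of the leading coefficients is needed.
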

\begin{proof}
We use the decomposition of the canonical height into a sum of local canonical heights and the equality $\widehat{h}(\sigma (P'))=\deg (\sigma )\widehat{h}(P')$ to reformulate Equation (\ref{equation-can-height-mP}) as
$$\log |\psi_{\sigma}(P')| = \frac{\deg (\sigma )\log |\Delta_{E'}|-\log |\Delta_{E}|}{12} + \displaystyle\sum_{v\textrm{ prime}}\left(\widehat{h}_{v}(\sigma (P'))- \deg (\sigma)\widehat{h}_{v}(P')\right) .$$
Equality (\ref{equation-comapraison-EDS-div-pol-good-red}) follows since 
\begin{equation}\label{hauteur-locale-can-bonne-red}
\widehat{h_{v}}(Q)=\frac{1}{2}\max\{ 0, -v(x(Q))\} +\frac{1}{12}v(\Delta_{\mathcal{E}}) = v(B_{Q})+\frac{1}{12}v(\Delta_{\mathcal{E}})
\end{equation}
for any point $Q\in\mathcal{E}(\Q )$ with good reduction at $v$ (where $\mathcal{E}\in \{ E',E\}$). 

Inequality (\ref{equation-comapraison-EDS-div-pol}) is obtained in the same way as Equality (\ref{equation-comapraison-EDS-div-pol-good-red}) except that we replace Equality (\ref{hauteur-locale-can-bonne-red}) by the following inequality:
$$\frac{1}{24}\min(0,v(j(\mathcal{E})))\le \widehat{h_{v}}(Q)-\frac{1}{2}\max\{ 0,-v(x(Q))\}=\widehat{h_{v}}(Q)-v(B_{Q})\le\frac{1}{12}v(\Delta_{\mathcal{E}})$$
(which holds for any $\Q$-point $Q$ on an elliptic curve $\mathcal{E}$ given by a minimal Weierstrass equation; see \cite[Chapter III, Theorem 4.5]{Lang} for details). 
\end{proof}

Lemma \ref{chain_rule} explain how the division polynomial associated to the composition of two isogenies factorizes in a natural way. The following key lemma gives an analog property for terms in a magnified elliptic divisibility sequence. 

\begin{slemma}\label{easy-factorization} 
We use notation \ref{notation-P-Pprime-sigma}. Recall that $d = \deg (\sigma ).$ Then we have
$$v\left(B_P\right)\le v\left(B_{\sigma(P)}\right).$$
If $E'$ is also minimal, then $v(B_P)>0$ implies
$$v\left(B_{\sigma(P)}\right)\leq v\left(B_P\right)+v(d).$$
\end{slemma}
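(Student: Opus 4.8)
The plan is to work one prime $v$ at a time and compare the $v$-adic valuations of $B_P$ and $B_{\sigma(P)}$ using the formalism of local canonical heights already set up in the excerpt. Recall from Equation~(\ref{hauteur-locale-can-bonne-red}) and the two-sided estimate in the proof of Proposition~\ref{proposition-link-EDS-division-polynomial} that for any $\Q$-point $Q$ on a minimal Weierstrass model one has
$$v(B_Q)=\tfrac12\max\{0,-v(x(Q))\}\le \widehat{h_v}(Q),$$
with equality at every place of good reduction. Since $\sigma$ is an isogeny, $\widehat{h_v}$ is compatible with $\sigma$ up to a bounded defect, and on the level of naive quantities the key input is that $\sigma$ does not create poles in the $x$-coordinate out of nothing: if $v(x(P))\ge 0$ then, because the kernel points of $\sigma$ are integral at $v$ (the roots of $\phi_\sigma$ and $\psi_\sigma^2$ are algebraic integers, as proved above), Vélu's formula~(\ref{defintion-pis-E-sigma}) expresses $x(\sigma(P))=\phi_\sigma(P)/\psi_\sigma(P)^2$ as a ratio of $v$-integral quantities whose numerator and denominator are controlled. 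I would first dispose of the inequality $v(B_P)\le v(B_{\sigma(P)})$ this way: if $v(B_P)=0$ there is nothing to prove, and if $v(B_P)>0$, i.e. $v(x(P))<0$, then $P$ reduces to $0_E$ on the formal group at $v$, hence so does $\sigma(P)=\sigma(\text{formal point})$, and the standard formal-group estimate for isogenies gives $v(x(\sigma(P)))\le v(x(P))$, which rearranges to the claimed inequality of denominators.

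For the second, sharper statement, assume both $E$ and $E'$ are minimal and $v(B_{P'})>0$. Working in the formal group $\widehat{E'}$ at $v$, write $z'=z(P')$ for the formal parameter; then $v(z')>0$ and $\sigma$ induces a homomorphism of formal groups $\widehat{\sigma}:\widehat{E'}\to\widehat{E}$ over $\Z_v$, with $\widehat{\sigma}(z')$ the formal parameter of $\sigma(P')$. The leading coefficient of $\widehat{\sigma}$ as a power series is $d_\sigma\in\Z$ (Lemma~\ref{integrality-phi-psi}), and $v(d_\sigma)\le v(d)$ since $d_\sigma^2\mid$ a quantity dividing $d^2$ — more directly, $d_\sigma=d$ up to sign by Vélu's normalization $\sigma^*\omega_E=d_\sigma\omega_{E'}$ combined with $\deg\sigma=d$, so $v(d_\sigma)=v(d)$. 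Since $\widehat{\sigma}(z')=d_\sigma z'+(\text{higher order in }z')$ with all higher coefficients in $\Z_v$, and $v(z')\ge 1$, one gets $v(\widehat{\sigma}(z'))=v(d_\sigma)+v(z')$ when $v(d_\sigma)<v(z')$, and in general $v(\widehat{\sigma}(z'))\le v(d_\sigma)+v(z')\le v(d)+v(z')$. Translating back via the relation between the formal parameter $z$ and $x$ on a minimal model, namely $v(z)=\tfrac12 v(1/x)=v(B)$ for points in the kernel of reduction, this reads exactly $v(B_{\sigma(P)})\le v(B_P)+v(d)$.

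The main obstacle I anticipate is the bookkeeping at places of bad or additive reduction, where the clean identity $v(B_Q)=v(z(Q))$ (parameter of the formal group equals valuation of the denominator) can fail because the minimal model is not smooth over $\Z_v$; this is precisely why the hypothesis that $v(B_{P'})>0$ — forcing $P'$ into the kernel of reduction, where the formal group is available regardless of the reduction type — is imposed for the second inequality, and why minimality of both $E$ and $E'$ is needed (so that $d_\sigma$ is genuinely the integer predicted by Vélu and not off by an extra power of $v$ coming from a non-minimal twist). A secondary technical point is confirming that $\widehat{\sigma}$ really does have $\Z_v$-integral coefficients with leading term $d_\sigma$: this follows from the integrality of $\phi_\sigma$ and $\psi_\sigma^2$ established in the excerpt together with the expansion of $x\circ\sigma$ near the origin, but it should be spelled out carefully. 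Once these integrality facts are in hand, both inequalities drop out of the single estimate $v(\widehat{\sigma}(z'))\le v(d_\sigma)+v(z')$ together with its refinement $v(\widehat{\sigma}(z'))\ge v(z')$.
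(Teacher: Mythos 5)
Your treatment of the first inequality is fine and is essentially the paper's argument: $\sigma$ induces a formal-group homomorphism $F_\sigma\in\Ocal_v[[z]]$ with $F_\sigma(0)=0$, so $v(F_\sigma(z))\ge v(z)$, i.e. $v(B_{\sigma(P)})\ge v(B_P)$. The gap is in the second inequality. From $F_\sigma(z)=d_\sigma z+c_2z^2+\cdots$ with $c_i\in\Ocal_v$ and $v(z)\ge 1$ you may conclude only $v(F_\sigma(z))\ge\min\{v(d_\sigma)+v(z),\,2v(z)\}$; your assertion that ``in general $v(F_\sigma(z'))\le v(d_\sigma)+v(z')$'' does not follow, because when $v(d_\sigma)\ge v(z')$ the linear term need not dominate and cancellation against the higher terms can push the valuation strictly above $v(d_\sigma)+v(z')$. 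This really happens: take the minimal curve $E': y^2+xy=x^3+4x-3$ (discriminant $-3^4\cdot 109$), the infinite-order point $P'=(3/4,\,3/8)$, and $\sigma=[2]$, so $d_\sigma=2$; here $v_2(B_{P'})=1$ while $x(2P')=8401/576$, so $v_2(B_{[2]P'})=3>v_2(B_{P'})+v_2(d_\sigma)$, the cancellation coming from $[2](z)=2z-a_1z^2-\cdots$ with $a_1$ odd. So the estimate on which your whole second half rests is false, and the stronger bound it would yield (with $v(d_\sigma)$ in place of $v(d)$) is false as well. A second error is the claim that $d_\sigma=\pm d$: the correct relation is $d_\sigma d_{\widehat{\sigma}}=d$, and $d_\sigma$ can be a proper divisor of $d$ (for one of the two dual isogenies of a $2$-isogeny pair one has $d_\sigma=\pm 1$); only $v(d_\sigma)\le v(d)$ is available, and that needs both models minimal so that $d_{\widehat{\sigma}}\in\Z$.

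The paper does not try to bound $F_\sigma$ by its linear coefficient at all. It applies the already-proved inequality $v(B_Q)\le v(B_{\tau(Q)})$ to the dual isogeny $\widehat{\sigma}:E\to E'$ (this is where minimality of the second curve is used, to get the formal-group map of $\widehat{\sigma}$ over $\Ocal_v$), obtaining $v(B_{\sigma(P)})\le v(B_{\widehat{\sigma}(\sigma(P))})=v(B_{dP})$, and then invokes the multiplication-by-$d$ estimate $v(B_{dP})\le v(B_P)+v(d)$. If you want to repair your argument you should either take this detour through $\widehat{\sigma}$ or prove the upper bound by a genuinely different device (e.g. the formal logarithm, which gives $v(z(dP))=v(d)+v(z(P))$ once $v(z(P))>v(p)/(p-1)$); note that the residual case $v=2$, $v_2(B_P)=1$ is exactly where the ``leading term dominates'' reasoning breaks down, as the example above shows, so any complete write-up has to address that case explicitly rather than sweep it into a general power-series estimate.
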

\begin{proof}
On the assumption that $E'$ is minimal at $v$, it is not hard to show
(see, for example, the exposition
in \cite{streng}) that the isogeny $\sigma$ induces a map of formal groups
$F_\sigma:\hat{E'}\rightarrow \hat{E}$
defined over $\Ocal_v$ with $F_\sigma(0)=0$ (Streng proves this for number
fields, but the proof works for any local field).
It follows immediately that if $v(x(P))<0$, as $F_\sigma(z)\in \Ocal_v[[z]]$
vanishes at $0$,
$$v(B_{\sigma(P)})=v(F_\sigma(z))\geq v(z)=v(B_P).$$
If $E$ is minimal as well, we may apply the same argument to the
dual isogeny $\hat{\sigma}:E\rightarrow E'$,
noting that the composition is the multiplication-by-$d$ map.  The argument above now tell us that $v(B_{\sigma(P)})\leq v(B_{dP})\leq v(B_P)+v(d).$
\end{proof}

\begin{subsection}{The proof of Theorem \ref{Introduction-Theorem-Thue}.}
Assume that $nP'$ is not an $S(P')$-integer point. Then $B_{nP'}$ has a prime factor coprime to $B_{P'}$. Since $B_{nP'}$ divides $B_{n\sigma (P')}$, it follows that every prime factor of $B_{n\sigma (P')}$ divides $B_{nP'}$. Thus Lemma \ref{easy-factorization} implies that $B_{\sigma (nP')}$ divides $\deg (\sigma )B_{nP'}$. Applying Inequality~(\ref{equation-comapraison-EDS-div-pol}) to the point $nP'$ and simplifying by $B_{nP'}$ we get $$\left| B_{nP'}^{\deg (\sigma )-1}\psi_{\sigma}(nP')\right|\le\deg (\sigma )e^{\left(3\deg (\sigma ) h(E')/2\right)} .$$
Moreover from Theorem~\ref{theoremA-Ayad}  we can deduce that $\frac{B_{nP'}^{\deg (\sigma )}\psi_{\sigma}(nP')}{B_{n\sigma (P')}}$ is a divisor of $d_{\sigma}\Delta_{E'}^{r}$ for some integer $r\in\N$. Since $d_{\sigma}$ divides $\deg (\sigma )$ and since $B_{n\sigma (P')}$ divides $\deg (\sigma )B_{nP'}$, we get the divisibility of $\deg (\sigma )^{2}\Delta_{E'}^{r}$ by $B_{nP'}^{\deg (\sigma )-1}\psi_{\sigma}(nP')$. 
\end{subsection}

\section{Prime power terms in elliptic divisibility sequences and Siegel's theorem.}\label{section-un}

In this section we explain how many classical results from the theory of integer points on elliptic curves (especially from transcendence theory) can be used to prove analog results for prime power terms in magnified elliptic divisibility sequences.

We begin with he following lemma which is useful when trying to solve various inequations appearing in the proof of the primality conjecture. The technical introduction of the real number $A$ helps to optimize the size of the bound obtained.

\begin{lemma}\label{solution-bound-nsquare-logn}
Let $a,b$ and $A\ge 1$ be three positive real numbers. Let $n,d\ge 1$ be two positive integers such that 
$$n^{2}\le a(\log (n) +1)^{d} +b.$$
Then we have $n\le \max\left\{ A\left( 2d\log (2d) + 2\log (A)\right)^{d},\frac{a}{A}+\sqrt{b}\right\} .$
\end{lemma}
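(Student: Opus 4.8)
The plan is to split into two cases according to whether the right-hand side is dominated by the $a(\log n+1)^d$ term or by the $b$ term, which mirrors the two candidates inside the maximum. First I would set $T:=A\bigl(2d\log(2d)+2\log(A)\bigr)^d$ and $S:=\tfrac{a}{A}+\sqrt b$, and argue by contradiction: suppose $n>\max\{T,S\}$. Since $n>S\ge\sqrt b$, we have $b<n^2$, hence the hypothesis $n^2\le a(\log n+1)^d+b$ forces $n^2-b\le a(\log n+1)^d$ with $n^2-b>0$; combined with $n>S$ this should give an inequality of the shape $\tfrac{a}{A}\cdot A<n-\sqrt b\le \frac{n^2-b}{n+\sqrt b}\le\frac{a(\log n+1)^d}{n}$, i.e. roughly $n\le A(\log n+1)^d$ after clearing $a$. (One has to be mildly careful: from $n>\tfrac{a}{A}+\sqrt b$ and $n^2-b\le a(\log n+1)^d$ one gets $n<\tfrac{n^2-b}{n-\sqrt b}\cdot\tfrac{1}{?}$; the clean route is $n-\sqrt b<n^2-b=(n-\sqrt b)(n+\sqrt b)$ is automatic, so instead use $a>A(n-\sqrt b)>$ nothing — better: from $n-\sqrt b>\tfrac aA$ we get $A<\tfrac{a}{n-\sqrt b}$, and then $n^2\le a(\log n+1)^d+b$ gives $n^2-b\le a(\log n+1)^d$, so $A(n-\sqrt b)<a\le \tfrac{(n^2-b)}{(\log n+1)^d}\cdot\tfrac{(\log n+1)^d}{?}$ — I will arrange the algebra so the conclusion is $n\le A(\log n+1)^d$.)

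The heart of the matter is then the elementary estimate: \emph{if $n\le A(\log n+1)^d$ with $A\ge1$ and $d\ge1$, then $n\le A\bigl(2d\log(2d)+2\log A\bigr)^d$.} I would prove this by taking logarithms to get $\log n\le \log A + d\log(\log n+1)$ and then using the standard sublinearity of the logarithm — for any $c\ge1$, $\log x\le \tfrac{x}{c}+\log c$ (equivalently $\log(ex/c)\le x/c$ — the tangent-line bound $\log t\le t/e$) — applied with $x=\log n+1$ and a suitably chosen $c$ to absorb the factor $d$. Concretely, $d\log(\log n+1)\le d\bigl(\tfrac{\log n+1}{c}+\log c\bigr)$; choosing $c=2d$ makes the coefficient of $\log n$ equal to $\tfrac12$, yielding $\log n\le \log A + \tfrac12(\log n+1)+d\log(2d)$, hence $\log n\le 2\log A + 1 + 2d\log(2d)\le 2\log A+2d\log(2d)+2d\log(2d)$ — here I would be slightly more careful to land exactly on $2d\log(2d)+2\log A$ inside the bracket (absorbing the "$+1$" using $d\ge1$ so that $2d\log(2d)\ge 2\log 2>1$), giving $n\le e^{2\log A+2d\log(2d)}$... but wait, that is $A^2(2d)^{2d}$, not the claimed $A(2d\log(2d)+2\log A)^d$. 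So I must instead keep the bound in the form $n\le A(\log n+1)^d$ and feed the just-derived bound on $\log n$ back in: $\log n+1\le 2\log A+2d\log(2d)+1\le 2\log A+2d\log(2d)$ is false in general, so the right move is to bound $\log n+1\le 2d\log(2d)+2\log A$ directly from $\log n\le \tfrac12\log n + \tfrac12 + \log A + d\log(2d)$, i.e. $\tfrac12\log n\le \tfrac12+\log A+d\log(2d)$, so $\log n+1\le 2+2\log A+2d\log(2d)$, and then absorb the additive $2$ into $2d\log(2d)$ since $d\ge1\Rightarrow 2d\log(2d)\ge 2\log2\approx1.386$ — not quite $2$; I would instead note $2d\log(2d)+2\log A\ge 2\log A$ and handle the constant by a marginally cruder choice of $c$, or simply observe that one may assume $n\ge 8$ (else the conclusion is trivial since the RHS is $\ge A\cdot(2\log2)^d\ge(1.38)^d$... this still needs care for small $d$). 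Plugging $\log n+1\le 2d\log(2d)+2\log A$ into $n\le A(\log n+1)^d$ then gives exactly $n\le A(2d\log(2d)+2\log A)^d$.

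The main obstacle is purely bookkeeping: getting the constants to land \emph{exactly} on $2d\log(2d)+2\log A$ rather than something weaker, which is why the parameter $A$ (and the freedom to choose the auxiliary $c=2d$) is built into the statement — as the paper notes, "the technical introduction of the real number $A$ helps to optimize the size of the bound." I expect no conceptual difficulty beyond the tangent-line inequality $\log t\le t/c+\log c$ and a careful case split, and I would present the argument as: (i) reduce to $n\le A(\log n+1)^d$ via the case analysis on $b$; (ii) take logs and apply $\log t\le \tfrac{t}{2d}+\log(2d)$ to isolate $\log n$; (iii) back-substitute to conclude.
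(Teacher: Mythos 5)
Your overall architecture is sound and, in substance, close to the paper's: both hinge on the tangent-line estimate for $\log$ at the point $2d$ and on the auxiliary parameter $A$. Your reduction step does work once written cleanly: if $n>\frac{a}{A}+\sqrt{b}$ then $n-\sqrt{b}>\frac{a}{A}$, and since $(n-\sqrt{b})(n+\sqrt{b})=n^{2}-b\le a(\log (n)+1)^{d}$ you get $\frac{a}{A}n<a(\log (n)+1)^{d}$, i.e. $n<A(\log (n)+1)^{d}$. The genuine gap is in your key claim. With the inequality you actually compute with, $\log (x)\le \frac{x}{c}+\log (c)$, the choice $c=2d$ only yields $\log (n)+1\le 2\log (A)+2d\log (2d)+2$, and the extra additive $2$ cannot be absorbed into $2d\log (2d)$ when $d=1$ (as you note, $2\log (2)\approx 1.39<2$); the escape routes you sketch (a ``cruder choice of $c$'', assuming $n\ge 8$) are not carried out and do not obviously close the case of small $d$ and $A$ near $1$. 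So as written the argument does not land on the stated constant $2d\log (2d)+2\log (A)$, and you say so yourself.

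The fix is exactly the sharp form of the bound you mention parenthetically but then drop: $\log (x)\le \frac{x}{2d}+\log (2d)-1$ for all $x>0$ (the tangent line at $x=2d$ to the concave function $\log$) -- which is precisely the inequality the paper's proof opens with. Using it, $d\log (\log (n)+1)\le \frac{\log (n)+1}{2}+d\log (2d)-d$, so from $\log (n)\le\log (A)+d\log (\log (n)+1)$ you get $\log (n)+1\le 2\log (A)+2d\log (2d)+2-2d\le 2\log (A)+2d\log (2d)$ because $d\ge 1$, and back-substituting into $n\le A(\log (n)+1)^{d}$ gives exactly $n\le A\left(2d\log (2d)+2\log (A)\right)^{d}$. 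With that single repair your proof is complete; the remaining difference from the paper is only structural (the paper runs the same estimate in the contrapositive direction, showing that $n\ge A\left(2d\log (2d)+2\log (A)\right)^{d}$ forces $(\log (n)+1)^{d}\le n/A$, hence $n^{2}\le\frac{a}{A}n+b$ and $n\le\frac{a}{A}+\sqrt{b}$, thereby avoiding your case split), which is cosmetic.
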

\begin{proof}
Since $\log (x) \le \frac{x}{2d} + \log (2d) -1$ for every $x\ge 2d$, we have
$$\begin{array}{rcl}
\log\left( A^{1/d}\left( 2d\log (2d) + 2\log (A)\right)\right) &= &\log\left(2d\log (2d) + 2\log (A)\right) + \frac{\log(A)}{d}\\
&\le &2\log (2d) + \frac{2\log (A)}{d} - 1.\\
\end{array}$$
The map $x\longmapsto \log (x) -\frac{x}{A^{1/d}d}$ decreases on $[A^{1/d}d,+\infty [.$ It follows that 
$$\log (n^{1/d}) - \frac{n^{1/d}}{A^{1/d}d}\le \log\left( A^{1/d}\left( 2d\log (2d) + 2\log (A)\right)\right) -2\log (2d) - \frac{2\log (A)}{d}$$
and in particular that $\log (n^{1/d}) \le \frac{n^{1/d}}{A^{1/d}d} -1$ for any integer $d\ge 1$ and any integer $n\ge A\left( 2d\log (2d) + 2\log (A)\right)^{d}.$ From this inequality we deduce from the inequation 
$$n^{2}\le a(\log (n) +1)^{d} +b\le a(d\log (n^{1/d}) +1)^{d} +b$$ 
that either $n^{2}\le \frac{a}{A}n + b$ or $n\le A\left( 2d\log (2d) + 2\log (A)\right)^{d}.$  
\end{proof}

\begin{theorem}\label{theorem-general-bound-PSD-terms}
We use notation \ref{notation-P-Pprime-sigma}.  
Let $M'$, $M$ and $1>\epsilon\ge 0$ be three real numbers such that $d (1-\epsilon )>1$. Let $I$ be the set of indices $n\in\N$ such that
\begin{equation}\label{Strong-Siegel-hypothesis}
\begin{array}{l}
\widehat{h_{\infty}}(nP')\le\epsilon\widehat{h}(nP') +M'\textrm{ and}\\
\widehat{h_{\infty}}(nP)\le\epsilon\widehat{h}(nP) +M.\\
\end{array}
\end{equation}
Then $B_{nP'}$ has a prime factor coprime to $B_{P'}$ for every integer $n\in I$ such that $n\ge 2$ and 
\begin{equation}\label{general-bound-1}
n> \frac{2}{(1-\epsilon )\widehat{h}(P')} + \sqrt{\frac{M' + h(E') + \widehat{h}(P')}{(1-\epsilon )\widehat{h}(P')}}.
\end{equation}
Moreover $B_{n\sigma (P')}$ has a prime factor coprime to $B_{\sigma (P')}B_{nP'}$ for any $n\in I$ such that $n\ge 2$ and 
\begin{equation}\label{general-bound-2}
n> \frac{2}{(d-d\epsilon -1)\widehat{h}(P')} + \sqrt{\frac{M+h(E)+d\widehat{h}(P') + \log (d)}{(d-d\epsilon -1)\widehat{h}(P')}}.
\end{equation}
\end{theorem}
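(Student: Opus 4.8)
The plan is to prove both assertions by contraposition: assuming the relevant term has too few prime factors of the required type, I extract an inequality of the shape $n^{2}\le\alpha n+\beta$, which already forces $n\le\alpha+\sqrt{\beta}$ (indeed if $n>\alpha+\sqrt\beta\ge\alpha$ then $n^{2}-\alpha n=n(n-\alpha)>(\alpha+\sqrt\beta)\sqrt\beta\ge\beta$, a contradiction). The ingredients are: the decomposition $\widehat h=\widehat{h_{\infty}}+\sum_{p}\widehat{h_{p}}$ together with the hypotheses (\ref{Strong-Siegel-hypothesis}) and the scaling laws $\widehat h(nP')=n^{2}\widehat h(P')$, $\widehat h(n\sigma(P'))=d\,n^{2}\widehat h(P')$; the control of $p$-adic valuations of the sequence terms supplied by Lemma~\ref{easy-factorization}; and the local comparisons proved inside Proposition~\ref{proposition-link-EDS-division-polynomial}.

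For the first assertion, suppose $n\in I$, $n\ge2$, and every prime factor of $B_{nP'}$ lies in $S(P')$. For each prime $p$ the inequality established in the proof of Proposition~\ref{proposition-link-EDS-division-polynomial} (on $E'$) gives $\widehat{h_{p}}(nP')\le v_{p}(B_{nP'})\log p+\tfrac1{12}v_{p}(\Delta_{E'})\log p$; summing over $p$ and using $\tfrac1{12}\log|\Delta_{E'}|\le h(E')$ yields $\sum_{p}\widehat{h_{p}}(nP')\le\log B_{nP'}+h(E')$. Combining with $\widehat{h_{\infty}}(nP')\le\epsilon\widehat h(nP')+M'$ and $\widehat h(nP')=n^{2}\widehat h(P')$ gives $(1-\epsilon)n^{2}\widehat h(P')\le M'+h(E')+\log B_{nP'}$. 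Since the prime support of $B_{nP'}$ is contained in $S(P')$ and Lemma~\ref{easy-factorization} applied to the multiplication-by-$n$ isogeny of $E'$ gives $v_{p}(B_{nP'})\le v_{p}(B_{P'})+2v_{p}(n)$ for $p\in S(P')$, one has $\log B_{nP'}\le\log B_{P'}+2\sum_{p\in S(P')}v_{p}(n)\log p\le\log B_{P'}+2\log n$, the last step because $\prod_{p\in S(P')}p^{v_{p}(n)}\mid n$. Bounding $\log B_{P'}$ by $\widehat h(P')$ up to the naive/canonical discrepancy (absorbed into $h(E')$) and $2\log n\le2n$, the displayed inequality becomes $n^{2}\le\frac{2}{(1-\epsilon)\widehat h(P')}n+\frac{M'+h(E')+\widehat h(P')}{(1-\epsilon)\widehat h(P')}$, so $n$ satisfies (\ref{general-bound-1}).

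The second assertion is handled the same way on $E$, but one must first divide the height by $d=\deg(\sigma)$. Suppose $n\in I$, $n\ge2$, and every prime factor of $B_{n\sigma(P')}$ divides $B_{\sigma(P')}B_{nP'}$. As above, the local comparison on $E$, the hypothesis $\widehat{h_{\infty}}(nP)\le\epsilon\widehat h(nP)+M$ and $\widehat h(nP)=d\,n^{2}\widehat h(P')$ give $(1-\epsilon)d\,n^{2}\widehat h(P')\le M+h(E)+\log B_{n\sigma(P')}$. Since $B_{nP'}\mid B_{n\sigma(P')}$ (first part of Lemma~\ref{easy-factorization}), split $\log B_{n\sigma(P')}$ as $\sum_{q\mid B_{nP'}}v_{q}(B_{n\sigma(P')})\log q+\sum_{q\mid B_{\sigma(P')},\,q\nmid B_{nP'}}v_{q}(B_{n\sigma(P')})\log q$ (the remaining primes of $B_{n\sigma(P')}$ divide $B_{\sigma(P')}$ by assumption). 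Lemma~\ref{easy-factorization} for $\sigma$ bounds the first sum by $\log B_{nP'}+\sum_{q\mid B_{nP'}}v_{q}(d)\log q\le\log B_{nP'}+\log d$, while Lemma~\ref{easy-factorization} for the multiplication-by-$n$ isogeny of $E$ bounds the second sum by $\log B_{\sigma(P')}+2\log n$. Using $\log B_{nP'}\le\widehat h(nP')=n^{2}\widehat h(P')$ and $\log B_{\sigma(P')}\le\widehat h(\sigma(P'))=d\,\widehat h(P')$ (each up to a discrepancy absorbed into $h(E)$) and $2\log n\le2n$, one gets $(d-d\epsilon-1)\,n^{2}\widehat h(P')\le2n+M+h(E)+d\,\widehat h(P')+\log d$; since $d(1-\epsilon)>1$ the coefficient $d-d\epsilon-1$ is strictly positive and dividing through yields (\ref{general-bound-2}).

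The main obstacle is exactly this second assertion: the term $B_{nP'}$ appearing on the right is a priori of size comparable to $n^{2}\widehat h(P')$, so the estimate only closes because one passes from $\widehat h(n\sigma(P'))$ to $\widehat h(nP')$ by dividing by $d$, after which the hypothesis $d(1-\epsilon)>1$ leaves a positive leading coefficient. The remaining work — tracking which prime $q$ divides which of $B_{nP'}$, $B_{\sigma(P')}$, $B_{n\sigma(P')}$ so as to invoke Lemma~\ref{easy-factorization} for $\sigma$ or for $[n]$, and fitting the naive-versus-canonical height discrepancies and the passage from $\log n$ to $n$ into the stated constants $h(E)$, $d\widehat h(P')$, $2n$ — uses only the sharp forms of the local height estimates already recorded in the excerpt and introduces no new idea.
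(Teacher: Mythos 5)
Your argument has the same skeleton as the paper's proof: decompose $\widehat{h}$ into local canonical heights, use hypothesis (\ref{Strong-Siegel-hypothesis}) at the archimedean place, control the nonarchimedean contribution through Lemma~\ref{easy-factorization} applied both to $\sigma$ and to the multiplication-by-$n$ map, exploit $d(1-\epsilon)>1$ to keep a positive coefficient after moving $\widehat{h}(nP')=n^{2}\widehat{h}(P')$ to the left, and finish by solving an inequality of the form $n^{2}\le\alpha n+\beta$ (your direct estimate $2\log n\le 2n$ plus the observation $n\le\alpha+\sqrt{\beta}$ is a legitimate substitute for Lemma~\ref{solution-bound-nsquare-logn} with $A=1$). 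So the strategy is correct and is essentially the one in the paper.

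The one place where your write-up does not deliver the statement as claimed is the bookkeeping of the naive/canonical conversions. You pass through $\log B_{nP'}\le\log B_{P'}+2\log n$ and then assert $\log B_{P'}\le\widehat{h}(P')$ ``up to a discrepancy absorbed into $h(E')$''; but the $h(E')$ in the target numerator $M'+h(E')+\widehat{h}(P')$ is already consumed by the first conversion $\sum_{v}\widehat{h_{v}}(nP')\le\log B_{nP'}+\tfrac{1}{12}\log|\Delta_{E'}|$. The inequality $\log B_{Q}\le\widehat{h}(Q)$ is not free of charge: by the cited bound from \cite[Chapter III, Theorem 4.5]{Lang} one only gets $v(B_{Q})\le\widehat{h_{v}}(Q)+\tfrac{1}{24}\max\{0,-v(j)\}$, since nonarchimedean canonical local heights can be negative at places of bad reduction, so summing costs an extra term of size up to $\tfrac{1}{24}h(j(E'))\le\tfrac{1}{2}h(E')$; your route therefore proves (\ref{general-bound-1}) with numerator $M'+\tfrac{3}{2}h(E')+\widehat{h}(P')$ rather than the stated one, and the same issue appears twice in the second part (where, moreover, the defect for $\log B_{nP'}$ involves $h(j(E'))$, not the single $h(E)$ you have available). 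The paper avoids this by never converting back: at each place $v$ with $v(B_{nP'})>0$ the hypothesis forces $v(B_{P'})>0$, both points reduce to the identity, so $\widehat{h_{v}}(nP')=v(B_{nP'})+\tfrac{1}{12}v(\Delta_{E'})$ and $\widehat{h_{v}}(P')=v(B_{P'})+\tfrac{1}{12}v(\Delta_{E'})$, and Lemma~\ref{easy-factorization} gives the exact comparison $\widehat{h_{v}}(nP')\le\widehat{h_{v}}(P')+2h_{v}(n)$ with the discriminant shifts cancelling; likewise $\widehat{h_{v}}(n\sigma(P'))\le\widehat{h_{v}}(nP')+h_{v}(d)$ and $\widehat{h_{v}}(n\sigma(P'))\le\widehat{h_{v}}(\sigma(P'))+2h_{v}(n)$ in the second part. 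Replacing your ``$\log B\le\widehat{h}$'' steps by these place-by-place comparisons (and only then summing and dropping the remaining local heights) recovers the constants in (\ref{general-bound-1}) and (\ref{general-bound-2}); apart from this, your proof is the paper's.
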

\begin{proof}
The key ingredient is Inequalities (\ref{Strong-Siegel-hypothesis}) which play a role analog to Roth's theorem in the classical proof of Siegel's theorem.

Let $n\in I$ be an integer such that every prime factor of $B_{nP'}$ divides $B_{P'}$. The quadraticity of $\widehat{h}$ and the decomposition of $\widehat{h}$ into local canonical heights gives
$$n^{2}\widehat{h}(P')=\widehat{h}(nP')=\widehat{h_{\infty}}(nP')+\displaystyle\sum_{v\left( B_{nP'}\Delta_{E'}\right) >0}\widehat{h_{v}}(nP').$$
This equality and Inequality (\ref{Strong-Siegel-hypothesis}) imply that 
\begin{equation}\label{theorem-general-bound-PSD-terms-1st-part-ineq0}
(1-\epsilon )n^{2}\widehat{h}(P')\le M' + \displaystyle\sum_{v\left( B_{nP'}\Delta_{E'}\right) >0}\widehat{h_{v}}(nP').
\end{equation}
Using  \cite[Chapter III, Theorem 4.5]{Lang} Inequality (\ref{theorem-general-bound-PSD-terms-1st-part-ineq0}) becomes
\begin{equation}\label{theorem-general-bound-PSD-terms-1st-part-ineq2}
(1-\epsilon )n^{2}\widehat{h}(P')\le M' + h(E') + \displaystyle\sum_{v\left( B_{nP'}\right) >0}\widehat{h_{v}}(nP')
\end{equation}
Let $v$ be a place such that $v(B_{nP'})>0$. Then our hypothesis on the prime factors of $B_{nP'}$ asserts that $v(B_{P'})>0$. In particular $P'$ and $nP'$ have good reduction at $v$. It follows that $\widehat{h_{v}}(nP')=h_{v}(nP')+\frac{v(\Delta_{E'})}{12}$ and $\widehat{h_{v}}(P')=h_{v}(P')+\frac{v(\Delta_{E'})}{12}$. Since $v(B_{P'})>0$ Lemma~\ref{easy-factorization} implies that
$$\widehat{h_{v}}(nP')\le\widehat{h_{v}}(P') + 2h_{v}(n).$$
We deduce from this inequality and Inequality (\ref{theorem-general-bound-PSD-terms-1st-part-ineq2}) that
$$(1-\epsilon )n^{2}\widehat{h}(P')\le M' + h(E') + \displaystyle\sum_{v\left( B_{nP'}\right) >0}\left(\widehat{h_{v}}(P') + 2h_{v}(n)\right).$$
Using the inequality $\widehat{h_{\infty}}(P')\ge 0$ (see \cite[Chapter III, Theorem 4.5]{Lang}) we get 
\begin{equation}\label{theorem-general-bound-PSD-terms-1st-part-ineq3}
(1-\epsilon )n^{2}\widehat{h}(P')\le M' + h(E') + \widehat{h}(P') + 2\log (n).
\end{equation}
if $n\ge 2\log (2)$, applying Lemma \ref{solution-bound-nsquare-logn} with $A=1$, Inequality (\ref{theorem-general-bound-PSD-terms-1st-part-ineq3}) becomes
$$n\le \frac{2}{(1-\epsilon )\widehat{h}(P')} + \sqrt{\frac{M' + h(E') + \widehat{h}(P')}{(1-\epsilon )\widehat{h}(P')}}.$$

Let $n\in I$ be an integer such that every prime factor of $B_{nP}$ divides $B_{nP'}B_{\sigma (P')}$. The computations above are valid with $P'$ replaced by $P$ and $E'$ by $E$. We get an analog to Inequality (\ref{theorem-general-bound-PSD-terms-1st-part-ineq2}): 
\begin{equation}\label{theorem-general-bound-PSD-terms-2nd-part-ineq2}
(1-\epsilon )n^{2}\widehat{h}(\sigma (P'))\le M + h(E) + \displaystyle\sum_{v\left( B_{nP'}B_{\sigma (P')}\right) >0}\widehat{h_{v}}(n\sigma (P'))
\end{equation}
Lemma \ref{easy-factorization} implies that 
$$\displaystyle\sum_{v\left( B_{nP'}\right) >0}\widehat{h_{v}}(n\sigma (P'))\le\left(\displaystyle\sum_{v\left( B_{nP'}\right) >0}\widehat{h_{v}}(nP')\right) + \log (d)\le \widehat{h}(nP') +\log (d)$$
and $\displaystyle\sum_{v\left( B_{\sigma (P')}\right) >0}\widehat{h_{v}}(n\sigma (P'))\le\widehat{h}(\sigma (P')) +2\log (n).$Thus Inequality (\ref{theorem-general-bound-PSD-terms-2nd-part-ineq2}) gives
\begin{equation}\label{theorem-general-bound-PSD-terms-2nd-part-ineq3}
(d-d\epsilon -1)n^{2}\widehat{h}(P')\le M + h(E) + \widehat{h}(\sigma (P')) + 2\log (n) + \log (d).
\end{equation}
If $n\ge 2\log (2)$ we deduce from Lemma \ref{solution-bound-nsquare-logn} applied with $A=1$ that
$$n\le \frac{2}{(d-d\epsilon -1)\widehat{h}(P')} + \sqrt{\frac{M+h(E)+\widehat{h}(\sigma (P')) + \log (d)}{(d-d\epsilon -1)\widehat{h}(P')}}. $$
\end{proof}

To deduce from Theorem \ref{theorem-general-bound-PSD-terms} a uniform bound on the indices of prime power terms in magnified elliptic divisibility sequences one need to compare the naive heights $h(E')$ and $h(E)$ of two isogenous elliptic curves $E'$ and $E$. Such a comparison follows from the good behaviour of the Faltings height under isogeny. 
\begin{proposition}\label{lemma-pellarin}
We use notation \ref{notation-P-Pprime-sigma}. Then we have 
$$h(E')\le \alpha h(E)+ h(\deg (\sigma )) + 15.8.$$
with $\alpha = 5$ if $h(j(E))>4$ and $\alpha =16$ if $h(j(E))\le 4.$
\end{proposition}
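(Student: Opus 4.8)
The plan is to compare the two curves $E'$ and $E$ through their Faltings heights, for which the behaviour under isogeny is well controlled. Let $h_{\mathrm{Fal}}(E)$ denote the (stable) Faltings height of an elliptic curve $E/\Q$. First I would recall the comparison of Faltings height with the naive height $h(E)$: there are absolute constants $c_1,c_2$ (which one can extract from Silverman's \emph{Heights and elliptic curves} or from Pellarin's explicit work on Faltings heights) such that
\[
\bigl| 12\, h(E) - h_{\mathrm{Fal}}(E)\bigr| \le c_1\, \log\bigl(1+h_{\mathrm{Fal}}(E)\bigr) + c_2 .
\]
Actually, for a purely additive bound it is cleaner to use the two-sided estimate relating $h(E)$ (a $\max$ of $h(j(E))$ and $h(\Delta_E)$, up to the factor $1/12$) to $h_{\mathrm{Fal}}(E)$; the case split on whether $h(j(E))>4$ or $h(j(E))\le 4$ in the statement strongly suggests this is exactly where the two regimes of that comparison (dominated by the $j$-invariant versus by the discriminant/minimal model) come in, and where the two values $\alpha=5$ and $\alpha=16$ originate.

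Next I would invoke Faltings' theorem that an isogeny $\sigma:E'\to E$ of degree $d$ changes the Faltings height by at most $\tfrac12\log d$:
\[
\bigl| h_{\mathrm{Fal}}(E') - h_{\mathrm{Fal}}(E)\bigr| \le \tfrac{1}{2}\log d .
\]
Combining this with the two instances of the naive-vs-Faltings comparison (one for $E'$, one for $E$) gives an inequality of the shape $h(E') \le \alpha\, h(E) + \beta \log d + \gamma$ after absorbing the logarithmic error term $\log(1+h_{\mathrm{Fal}})$ into the linear term in $h(E)$ at the cost of enlarging the multiplicative constant. Here $\log d = h(\deg(\sigma))$ since $\deg(\sigma)$ is a positive integer, which matches the term $h(\deg(\sigma))$ in the statement; the numerical constant $15.8$ is then whatever comes out of bookkeeping the explicit constants in Pellarin's inequalities together with the $\tfrac12\log d$ loss and the conversion of $\log(1+x)$ into $\epsilon x + C_\epsilon$.

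The main obstacle is entirely one of \emph{explicit constants} rather than of ideas: I must use a version of the Faltings-height$\,\leftrightarrow\,$naive-height comparison in which the constants are numerically specified (this is why the paper cites Pellarin, or an equivalent explicit source), and I must be careful that the comparison is genuinely two-sided and applies uniformly — in particular the minimality and standardization hypotheses on the Weierstrass equations in Notation \ref{notation-P-Pprime-sigma} are what let me pin $h(\Delta_E)$ and hence $h(E)$ to the semistable model cleanly. The case analysis $h(j(E)) \lessgtr 4$ must be propagated correctly: one has to check that $h(j(E'))$ is controlled by $h(j(E))$ under the isogeny (again via Faltings height) so that the regime for $E$ governs the regime for $E'$, or else treat the crossover case by brute absorption into the larger constant $\alpha = 16$. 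Once the explicit form of Pellarin's estimate is in hand, the rest is a short linear manipulation and the verification that $15.8$ dominates the accumulated additive error.
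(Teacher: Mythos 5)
Your proposal follows essentially the same route as the paper's proof: the bound $\left| h_{F}(E)-h_{F}(E')\right|\le\frac{1}{2}\log (\deg (\sigma ))$ for the change of Faltings height under isogeny, combined with Pellarin's explicit two-sided comparison between the Faltings height and $h(E)$ (with its $\log (1+h(j(E)))$ error term), and then absorption of that logarithmic term into $\alpha h(E)$ via the case split on $h(j(E))$. The only remark worth adding is that in the paper's arrangement the logarithmic error involves only $h(j(E))$ of the target curve $E$, so the transfer of the regime from $E$ to $E'$ that you worry about is not actually needed.
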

\begin{proof}The proof is based on the good behaviour of the Faltings height $h_{F}$ under isogeny: if $\sigma :E'\longrightarrow E$ is an isogeny between elliptic curves, then the Faltings heights $h_{F}(E)$ of $E$ and $h_{F}(E')$ of $E'$ satisfy the inequality:
$$\left| h_{F}(E)-h_{F}(E')\right|\le\frac{1}{2}\log (\deg (\sigma )).$$
When $E$ is a semi-stable elliptic curve, an explicit bound on  the difference between the Faltings height $h_{F}(E)$ of $E$ and the height $h(j(E))$ can be found in \cite{Pellarin}. In the general case, the proof of \cite[Lemma 5.2]{Pellarin}) gives
$$12h_{F}(E)\le\log\max\{ |j(E)\Delta_{E}|, |\Delta_{E}|\} + 6\log (1+h(j(E)) +47.15$$
$$\log\max\{ |j(E)\Delta_{E}|, |\Delta_{E}|\}\le 94.3 +24\max\{ 1,h_{F}(E)\} .$$
The term $\log\max\{ |j(E)\Delta_{E}|, |\Delta_{E}|\}$ can be expressed in terms of $h(E)$ using the two inequalities:
$$\begin{array}{lll}
12h(E)&=\max \{ h(\Delta_{E}),h(j(E))\}&\\
&\le\log\max\{ |j(E)\Delta_{E}|, |\Delta_{E}|\}&\le 24h(E).\\
\end{array}$$
It follows that
$$\begin{array}{rcl}
12h(E')
&\le &24\max\{ 1,h_{F}(E')\} +94.3 \\
&\le &\max\{ 24, 24h_{F}(E) + 12\log (\deg (\sigma ))\} + 94.3 \\
&\le &48h(E)+ 12\log (1+h(j(E)) + 12\log (\deg (\sigma )) + 188.6\\
\end{array}$$
We conclude by noticing that $\log (1+h(j(E)))\le\frac{h(j(E))}{12}\le h(E)$ whenever the inequality $h(j(E)) >48$ holds. 
\end{proof}

\section{A consequence of the Hall-Lang conjecture.}\label{section-deux}

In \cite{eims}, Everest, Ingram, Stevens and the author prove the existence of a uniform bound on the number of prime power terms in magnified elliptic divisibility sequences assuming the following conjecture.
\begin{conjecture}[Lang]\label{Lang-Conjecture}
There is an (absolute) constant $C>0$ such that for every $\Q$-point $P$ on an elliptic curve $E$ defined over $\Q$ by a minimal equation the following inequality holds
$$h(E)\le C\widehat{h}(P).$$
\end{conjecture}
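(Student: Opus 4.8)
The displayed inequality is Lang's conjecture on a lower bound for the canonical height (implicitly for $P$ of infinite order, since otherwise the right-hand side is $0$ while $h(E)>0$ for every elliptic curve over $\Q$), and it is open; so the realistic plan is to carry out the two things that are actually available --- deduce it from Szpiro's conjecture by running the Hindry--Silverman argument already invoked for the constant $C_{\sigma}$ of Theorem~\ref{Bound-compositecase-N1-N3}, and prove it unconditionally for curves with integral $j$-invariant, such as the $E_{A}$ of Section~\ref{section-six}. For the first I would decompose $\widehat{h}(P)=\widehat{h_{\infty}}(P)+\sum_{v}\widehat{h_{v}}(P)$ and separate the finite places of good reduction, whose contribution is $\geq 0$, from those of bad reduction. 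At a bad place $v$ the local canonical height is governed by the image of $P$ in the component group of the N\'eron model: in the multiplicative case that group is cyclic of order $n_{v}=v(\Delta_{E})=-v(j(E))$, the value $\widehat{h_{v}}(P)$ is a rational with denominator dividing $2n_{v}^{2}$, and it can be as negative as a bounded multiple of $v(\Delta_{E})$ unless $P$ reduces into the identity component, in which case it is $\geq 0$; the additive places are finitely many of bounded exponent.

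Next I would handle the archimedean place through the uniformisation $E(\C)=\C/\Lambda$: since the archimedean local height carries a logarithmic singularity at the origin, $\widehat{h_{\infty}}(P)$ is large precisely when the elliptic logarithm $z_{P}$ of $P$ is very close to $\Lambda$, and it is bounded below by a positive multiple of $\log|\Delta_{E}|$ once $z_{P}$ is kept a fixed distance away from $\Lambda$. The counting step then ties the two sides together: for a suitable integer $k$ depending only on the Szpiro ratio $s_{E}=\log|\Delta_{E}|/\log|\mathcal{F}_{E}|$, a pigeonhole argument applied to $P,2P,\dots,kP$ --- simultaneously among the finitely many component-group configurations at the bad places and among the residues of $z_{mP}$ modulo $\Lambda$ --- produces an $m\leq k$ such that $mP$ lies in the identity component at every bad place (so all its finite local heights are $\geq 0$) and $z_{mP}$ is bounded away from $\Lambda$; then $\widehat{h}(mP)\geq\widehat{h_{\infty}}(mP)$ is at least a positive multiple of $\log|\Delta_{E}|$, so $m^{2}\widehat{h}(P)=\widehat{h}(mP)$ forces $\widehat{h}(P)$ to be at least $\log|\Delta_{E}|$ divided by a function of $s_{E}$ alone. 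Since $h(E)=\frac{1}{12}\max\{h(j(E)),h(\Delta_{E})\}$ with $h(\Delta_{E})=\log|\Delta_{E}|$ and $h(j(E))$ itself at most a function of $s_{E}$ times $\log|\Delta_{E}|$, this rearranges to $h(E)\leq C(s_{E})\,\widehat{h}(P)$. If Szpiro's conjecture holds then $s_{E}$ is absolutely bounded, $C$ may be taken absolute, and the claim follows; unconditionally, when $j(E)$ is an algebraic integer there are no multiplicative primes, the bad-place part of the counting is vacuous, and Silverman's sharper form of the same argument already yields an absolute $C$.

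The genuine obstacle is that there is no unconditional substitute for the boundedness of the Szpiro ratio: every known lower bound for $\widehat{h}(P)$ that detects $\Delta_{E}$ deteriorates as $s_{E}\to\infty$ --- Hindry--Silverman's like $(20s_{E})^{8}10^{4s_{E}}$, which is the current state of the art and exactly the source of the factor $C_{\sigma}$ appearing in this paper --- so extracting an absolute $C$ has the same strength as the $abc$ conjecture. A secondary, purely technical, difficulty, if one wants an explicit constant rather than a qualitative bound, is keeping the archimedean estimate and the pigeonhole count completely effective with usable numerical constants; already replacing the $10^{4s_{E}}$ by a polynomial in $s_{E}$ would be a substantial advance.
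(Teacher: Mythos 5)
The statement you were asked about is Conjecture \ref{Lang-Conjecture}: the paper states it as a conjecture and never proves it, using it only as a hypothesis in Theorem \ref{theorem-intro-Hall-Lang} and Proposition \ref{All-lang-Hall-Lang}, so there is no proof in the paper against which to measure yours. You correctly recognized this, and your account of what is actually available is accurate: the Hindry--Silverman lower bound of \cite{hindrysilverman}, which gives a constant depending on the Szpiro ratio, is exactly the source of the constant $C_{\sigma}$ in Theorem \ref{Bound-compositecase-N1-N3} (so the conjecture becomes a theorem with an absolute constant only if Szpiro's conjecture bounds that ratio), and the unconditional case of integral $j$-invariant is precisely what the paper alludes to in Section \ref{section-six}, where for the curves $E_{A}$ (with $j=1728$) it proves the needed explicit Lang-type bound by hand in Proposition \ref{proposition-controle-hauteur-canonique-deg2} rather than citing the general result. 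Your sketch of the Hindry--Silverman mechanism --- local decomposition of $\widehat{h}$, non-negativity at good places, component-group control at bad places, an archimedean lower bound once the elliptic logarithm is kept away from the lattice, and a pigeonhole over multiples $mP$ with $m$ bounded in terms of the Szpiro ratio --- is a fair description of that argument, and your closing diagnosis, that extracting an absolute constant is equivalent in strength to bounding the Szpiro ratio and hence of $abc$ type, is the correct identification of the genuine obstruction. So there is no gap to report, simply because no proof is possible at present; the only point worth noting is that the paper's working substitute for the conjecture in its unconditional results is the curve-by-curve explicit computation just mentioned, not a general theorem.
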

Using Theorem \ref{theorem-general-bound-PSD-terms} many bounds on integer points of elliptic curve can be generalized to the case of prime power terms in magnified elliptic divisibility sequences. In this section we show an improvement of the main result proven in \cite{eims}: the existence of a uniform bound on the indices of prime power terms in magnified elliptic divisibility sequences, assuming the Lang conjecture and the following conjecture.
\begin{conjecture}[Hall, Lang]\label{Hall-Lang-Conjecture}There are two constant $K, M>0$ such that for every quadruplet of integers $(A,B,x,y)$ with $y^{2}=x^{3}+Ax +B$ the following inequality holds 
\begin{equation}
\max\{ |x|,|y|\}\le K\max\{ |A|,|B|\}^{M}.
\end{equation}
\end{conjecture}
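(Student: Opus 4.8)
The statement of Conjecture~\ref{Hall-Lang-Conjecture} is Lang's strengthening of Hall's conjecture on the size of integral points of Weierstrass cubics, and it is a well-known open problem; what follows is therefore only the shape of a proof, indicating where the genuine difficulty sits. I would attack it through the $abc$ conjecture. Viewing the Weierstrass equation as the additive relation $x^{3}+(Ax+B)=y^{2}$, one first removes the common factors of the three terms (these are bounded in terms of the discriminant $\Delta=-16(4A^{3}+27B^{2})$, hence in terms of $\max\{|A|,|B|\}$), reducing to pairwise coprime terms, one of which is a perfect cube and another a perfect square; an $abc$ inequality then controls the radical of their product by a power of $|x|$ and of $\max\{|A|,|B|\}$, and, since the largest of the three terms has size $\asymp|x|^{3}$, one solves for $|x|$ to recover $|x|\le K\max\{|A|,|B|\}^{M}$ with an explicit exponent $M$; the bound on $|y|$ follows from the Weierstrass equation itself. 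This is exactly the mechanism by which the $abc$ conjecture, equivalently Szpiro's conjecture for the associated elliptic curves, is known to imply the Hall--Lang conjecture.

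Unconditionally one can still make such integral points \emph{effective}: rewriting the equation as a unit equation over the splitting field of $x^{3}+Ax+B$ and applying Baker's theory of linear forms in logarithms, together with its $p$-adic and elliptic refinements, bounds $\log\max\{|x|,|y|\}$ by a fixed power of $\max\{|A|,|B|\}$; the partial $abc$ results of Stewart and Yu, in which the radical enters with an exponent close to $1/3$ inside an exponential, do not change the shape of this estimate.

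The crux, and the reason no proof is presently available, is precisely the gap between this \emph{exponential} dependence on $\max\{|A|,|B|\}$ and the \emph{polynomial} dependence asserted in Conjecture~\ref{Hall-Lang-Conjecture}: a uniform polynomial bound of this type is of essentially the same depth as a weak uniform form of the $abc$ conjecture, so within the present state of the art the statement has to be assumed, as it is in the hypotheses of Theorem~\ref{theorem-intro-Hall-Lang}, rather than proved.
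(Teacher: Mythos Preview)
Your assessment is correct and matches the paper's treatment: Conjecture~\ref{Hall-Lang-Conjecture} is stated without proof and is used only as a hypothesis (in Theorem~\ref{theorem-intro-Hall-Lang} and Proposition~\ref{All-lang-Hall-Lang}), so there is nothing to compare against. Your sketch of the conditional route via $abc$, the unconditional exponential bounds from Baker's method, and the identification of the polynomial-versus-exponential gap as the genuine obstruction is accurate and appropriate context for why the statement must be assumed rather than proved.
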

Given a point $P$ on an elliptic curve, the multiple $nP$ is an integer point if and only if the $n$-th term in the elliptic divisibility sequences associated to $P$ is a unit (i.e. has no prime factor). This explains why we need the Hall-Lang conjecture to prove the existence of a uniform bound on the set of indices $n$ such that $B_{nP}$ has at most one prime factor. \\

\begin{proposition}\label{All-lang-Hall-Lang}
We use notation \ref{notation-P-Pprime-sigma} and  we assume 
\begin{enumerate}
\item the Hall-Lang conjecture;
\item the Lang conjecture;
\item that $E$ and $E'$ are given by minimal short Weierstrass equations;
\item that $\deg (\sigma )>4M$ (where $M$ is defined as in the statement of the Hall-Lang conjecture).
\end{enumerate}
Then there is a constant $N\ge 0$ (independent of the choice for $(E,P,\sigma )$) such that $B_{nP}$ has two distinct prime factors coprime to $B_{P'}$ for each $n>N$.
\end{proposition}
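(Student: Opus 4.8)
The plan is to deduce the statement from Theorem~\ref{theorem-general-bound-PSD-terms}: I would extract from the Hall--Lang conjecture the two ``strong Siegel'' inequalities~(\ref{Strong-Siegel-hypothesis}) \emph{for every} index $n$, with a value of $\epsilon$ for which Theorem~\ref{theorem-general-bound-PSD-terms} applies and with error constants of size $O(h(E'))$ and $O(h(E))$, and then use the Lang conjecture (Conjecture~\ref{Lang-Conjecture}) to turn the bounds~(\ref{general-bound-1})--(\ref{general-bound-2}) into a single uniform one. The first step is an elementary reduction: if $B_{nP}$ does not have two distinct prime factors coprime to $B_{P'}$, then—since $B_{P'}\mid B_{\sigma(P')}$ and $B_{nP'}\mid B_{n\sigma(P')}=B_{nP}$ by Lemma~\ref{easy-factorization}—at least one of the two conclusions of Theorem~\ref{theorem-general-bound-PSD-terms} must fail for this $n$. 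Indeed, were $p_{1}$ a prime factor of $B_{nP'}$ coprime to $B_{P'}$ and $p_{2}$ a prime factor of $B_{n\sigma(P')}$ coprime to $B_{\sigma(P')}B_{nP'}$, then $p_{1}\ne p_{2}$ (as $p_{1}\mid B_{nP'}$ while $p_{2}\nmid B_{nP'}$) and both would be prime factors of $B_{nP}$ coprime to $B_{P'}$ (since $p_{2}$ is coprime to $B_{\sigma(P')}$ and $B_{P'}\mid B_{\sigma(P')}$). Hence, once~(\ref{Strong-Siegel-hypothesis}) is known for all $n$, such an index $n$ is bounded by the larger of the right-hand sides of~(\ref{general-bound-1}) and~(\ref{general-bound-2}) (and by $2$).

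The core of the proof is deriving~(\ref{Strong-Siegel-hypothesis}) from Hall--Lang. Write $E':y^{2}=x^{3}+A'x+B'$ and $nP'=(u/w^{2},v/w^{3})$ with $w=B_{nP'}$ and $\gcd(u,w)=\gcd(v,w)=1$. Clearing denominators, $(u,v)$ is an integral point on the elliptic curve $y^{2}=x^{3}+(A'w^{4})x+(B'w^{6})$ (of discriminant $w^{12}\Delta_{E'}\ne0$), so the Hall--Lang conjecture gives $|u|\le K\bigl(\max\{|A'|,|B'|\}\,w^{6}\bigr)^{M}$, whence
$$\log^{+}|x(nP')|\le M\log\max\{|A'|,|B'|\}+(6M-2)\log w+\log K.$$
For a minimal short Weierstrass equation one has $\log\max\{|A'|,|B'|\}=O(h(E'))$ (via $c_{4}=-48A'$, $c_{6}=-864B'$, $1728\Delta_{E'}=c_{4}^{3}-c_{6}^{2}$ and the definition of $h(E')$); the naive and canonical archimedean local heights of $nP'$ differ by $O(h(E'))$; and the decomposition of $\widehat{h}$ into local canonical heights, combined with the lower bounds of \cite[Chapter~III, Theorem~4.5]{Lang}, yields $\log w\le\widehat{h}(nP')-\widehat{h_{\infty}}(nP')+O(h(E'))$. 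Substituting this into the display and solving the resulting linear inequality for $\widehat{h_{\infty}}(nP')$ gives
$$\widehat{h_{\infty}}(nP')\le\Bigl(1-\frac{1}{3M}\Bigr)\widehat{h}(nP')+M_{E'},\qquad M_{E'}=O(h(E')),$$
with an \emph{absolute} implied constant (the Hall--Lang constants $K$ and $M$ cancel from the additive error). Running the same computation on $E$—also in short Weierstrass form by hypothesis—gives the companion inequality for $nP$ with error $M_{E}=O(h(E))$. Thus~(\ref{Strong-Siegel-hypothesis}) holds for all $n$ with $\epsilon=1-\frac{1}{3M}$ and $(M',M)=(M_{E'},M_{E})$, so that $I=\N$; and the hypothesis $\deg(\sigma)>4M$ forces $d(1-\epsilon)-1=\frac{d}{3M}-1\ge\frac{d}{12M}>0$, which is precisely the inequality required by Theorem~\ref{theorem-general-bound-PSD-terms}.

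It then remains to check that the bounds~(\ref{general-bound-1})--(\ref{general-bound-2}) are independent of $(E,P,\sigma)$. The Lang conjecture, applied to $(E',P')$ and to $(E,P)$, gives $h(E')\le C\widehat{h}(P')$ and $h(E)\le C\widehat{h}(P)=Cd\,\widehat{h}(P')$, and—since $h(E')$ is bounded below by a positive absolute constant—also $\widehat{h}(P')\ge h(E')/C\ge c_{0}>0$. Feeding these estimates into~(\ref{general-bound-1}) and~(\ref{general-bound-2}), where $1-\epsilon=\frac{1}{3M}$ and $d(1-\epsilon)-1$ (which is comparable to $d$) occupy the denominators, every power of $d$ cancels and the leftover $\log d$ is absorbed using $\log d\le d/e$; both bounds become quantities depending only on $M$ and $C$. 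Taking $N$ to be their maximum (together with $2$) completes the proof.

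I expect the delicate point to be the second step, namely controlling all the error terms: one must verify that each additive error—the archimedean naive-versus-canonical local height comparison, the bound $\log\max\{|A'|,|B'|\}=O(h(E'))$, and the lower bounds on the finite canonical local heights—is $O(h(E'))$, respectively $O(h(E))$, with a genuinely \emph{absolute} implied constant, so that the Hall--Lang constants do not leak into the final bound. Granting this, solving the linear inequality in $\widehat{h_{\infty}}$ returns exactly the exponent $1-\frac{1}{3M}$, and it is this exponent—together with $\deg(\sigma)>4M$—that makes Theorem~\ref{theorem-general-bound-PSD-terms} applicable and, via the Lang conjecture, makes the resulting bound $N$ uniform.
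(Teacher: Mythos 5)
Your proposal is correct and follows essentially the same route as the paper: apply Hall--Lang to the integer point $(A_{nP'},C_{nP'})$ on $y^{2}=x^{3}+A'B_{nP'}^{4}x+B'B_{nP'}^{6}$ (and likewise on $E$), convert naive to canonical heights to obtain the inequalities (\ref{Strong-Siegel-hypothesis}) with $\epsilon=1-\frac{1}{3M}$, then feed Theorem \ref{theorem-general-bound-PSD-terms} (using $\deg(\sigma)>4M$) and the Lang conjecture, together with $\widehat{h}(P')\ge h(E')/C\ge\frac{\log 2}{12C}$, to make the bound uniform. One small inaccuracy: the Hall--Lang constants do not cancel from the additive error (a term of the shape $\frac{\log K}{6M}$ survives, and the paper's bound is a function $N(M,\log K,C)$ rather than of $M$ and $C$ alone), but this is harmless since $K$ and $M$ are absolute constants independent of $(E,P,\sigma)$.
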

\begin{proof}
Let $A,B,A',B'$ be four integers such that $E$ and $E'$ are given by the two following equations
$$\begin{array}{l}
E:y^{2}=x^{3}+Ax+B\\
E':y^{2}=x^{3}+A'x+B'\\
\end{array}$$
Let $n\ge 3$ be an integer. Since $(A_{nP'},C_{nP'})$ is an integer point on the elliptic curve given by the equation $y^{2}=x^{3}+A'B_{nP'}^{4} + B'B_{nP'}^{6}$, the Hall-Lang conjecture gives
$$\frac{1}{2}\log |A_{nP'}|\le 3M\log (B_{nP'}) + \frac{M}{2}\log\max\{ |A'|,|B'|\}  + \frac{1}{2}\log (K)$$
which can be rephrased as 
\begin{equation}\label{Hall-Lang-devenu-Roth}
h(nP')\le 3M (h(nP')-h_{\infty}(nP')) + 6Mh(E') +\frac{1}{2}\log (K)
\end{equation}
Using the two inequalities $\widehat{h_{\infty}}(Q)\le h_{\infty}(Q) + \frac{h(j(E'))}{12} +1.07$ and \linebreak$h(Q) \le\widehat{h}(Q) + \frac{h(j(E'))}{8} +1.205$ (proven in \cite{Silverman-diff-height-can-height}) Inequality (\ref{Hall-Lang-devenu-Roth}) becomes 
$$\begin{array}{rcl}
\widehat{h_{\infty}}(nP')&\le &\left( 1-\frac{1}{3M}\right)\widehat{h}(nP')+ 4 h(E') + \frac{\log (K)}{6M}+ 1.88\\
\end{array}$$
In the same way we prove that 
$$\widehat{h_{\infty}}(nP)\le \left( 1-\frac{1}{3M}\right)\widehat{h}(nP)+4h(E) + \frac{\log (K)}{6M}+ 1.88.$$
We apply Theorem \ref{theorem-general-bound-PSD-terms} noting that if $\deg (\sigma )>4M$ then $\frac{\log (\deg (\sigma ))}{\deg (\sigma )-3M}\le 4$ and $\frac{h(E')}{\widehat{h}(P')}\le C$ and $\frac{h(E)}{\widehat{h}(\sigma (P'))}\le C$ and $\frac{1}{\widehat{h}(P')}\le\frac{C}{h(E')}\le\frac{12C}{\log (2)}.$
In particular we get the existence of a function $N:\R^{3}\longrightarrow\R^{+}$ (independent of the choice for $(E,P,\sigma)$) such that if at most one prime factor of $B_{n\sigma (P')}$ divides $B_{P'}$ then $n\le N\left( M,\log (K), C\right) .$
\end{proof}

\section{Elliptic divisibility sequences associated to points in the bounded connected component of an elliptic curve.}\label{section-cinq}

We study Theorem \ref{theorem-intro-Hall-Lang} for two examples of magnified elliptic divisibility sequences:
\begin{itemize}
\item first we study the case when $P$ is in the unbounded component of $E$;
\item them we consider the case when $P$ is doubly magnified.
\end{itemize}
In those two particular cases we prove that Theorem \ref{theorem-intro-Hall-Lang} holds even if  the Hall-Lang conjecture is false. The results obtained in this section will be used in the proof of theorem \ref{Bound-compositecase-N1-N3}.

\begin{notation}\label{notation-minimal-equation-short-Weierstrass-equation-1}
Let $E$ be an elliptic curve defined over $\Q$ by a minimal Weierstrass equation. This minimal equation might not be a short Weierstrass equation, but the elliptic curve $E$ is isomorphic to an  elliptic curve $\mathcal{E}$ given by a short Weierstrass equation
$$\mathcal{E}: \widetilde{y}^{2}=\widetilde{x}^{3}+a\widetilde{x}+b$$
where $a$ and $b$ are two integers such that $\Delta_{\mathcal{E}}=6^{12}\Delta_{E}.$ The heights of $\mathcal{E}$ and $E$ are related by two inequalities $h(E)\le h(\mathcal{E})\le h(E)+\log (6)$. Since 
$$\begin{array}{rcl}
h(4a^3)=h\left(\frac{j(\mathcal{E})\Delta_{\mathcal{E}}}{16\times 12^{3}}\right)&= &h\left( 4\times 3^{9}\times j(\mathcal{E})\times \Delta_{E}\right)\\
&\le &h(j(\mathcal{E}))+h(\Delta_{E}) + 2\log (2) + 9\log (3)\\
\end{array}$$

$$\begin{array}{rcl}
\textrm{and }h(27b^{2})&=&h\left(\frac{\Delta_{\mathcal{E}}}{16}-4a^{3}\right)\\
&\le &\max\{ h(\Delta_{E}) + 8\log (2) + 12\log (3),h(4a^{3})\} + \log (2)\\
\end{array}$$
the following inequality holds
\begin{equation}\label{bound-David-height-naive-height}
\begin{array}{c}
\max\left\{ 1,h\left( 1,-\frac{a}{4},-\frac{b}{16}\right) ,h(j(\mathcal{E}))\right\}\le 12h(E) + 5\log (6).\\
\end{array}
\end{equation}

The left handside in Inequality (\ref{bound-David-height-naive-height}) appears in David's lower bound on linear forms in elliptic logarithm \cite[Th\'eor\`eme 2.1]{David}, a result used in section~\ref{section-quatre}. 
\end{notation}

\begin{proposition}\label{weakerthan-canonical-height}Let $E$ be an elliptic curve defined over $\Q$ by a minimal Weierstrass equation. We assume that $E(\R )$ has two connected components. Then for every rational point  $Q$ in the bounded connected component of $E (\R )$ the following inequality holds:
\begin{equation}
\widehat{h_{\infty}}(Q)\le 3h(E)+ \log (6) +1.07. 
\end{equation}
\end{proposition}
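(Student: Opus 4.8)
The plan is to bound $x(Q)$ geometrically and then invoke the standard comparison between the naive and canonical archimedean local heights. Since $Q$ lies on the compact (``egg'') component of $E(\R)$, its $x$-coordinate is trapped between two real roots of a cubic, and the very existence of that compact component constrains the cubic enough to make those roots $O(\sqrt{c_{4}})$.

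First I would normalize. Completing the square and then the cube brings $E$, over $\Q$, to the form $\eta^{2}=X^{3}-\tfrac{c_{4}}{48}X-\tfrac{c_{6}}{864}$, by a transformation of the real locus that preserves the connected-component structure and replaces the $x$-coordinate by $X-b_{2}/12$. The hypothesis that $E(\R)$ has two components is equivalent to $\Delta_{E}>0$, i.e.\ to this monic cubic having three distinct real roots; in particular $c_{4}>0$ (because $c_{4}^{3}=c_{6}^{2}+1728\Delta_{E}$) and $27(c_{6}/864)^{2}<4(c_{4}/48)^{3}$. Since $Q$ is on the bounded component, $X(Q)$ lies between the two smaller roots.

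Next I would bound those roots. The substitution $X=\sqrt{c_{4}/48}\,t$ turns the cubic into $t^{3}-t+c$ with $|c|=|c_{6}/864|\,(c_{4}/48)^{-3/2}<2/(3\sqrt{3})$; as the local extreme values of $t\mapsto t^{3}-t$ are $\pm 2/(3\sqrt{3})$, attained at $t=\mp 1/\sqrt{3}$, while $(\pm 2/\sqrt{3})^{3}-(\pm 2/\sqrt{3})=\pm 2/(3\sqrt{3})$, every real root of $t^{3}-t+c$ lies in $(-2/\sqrt{3},\,2/\sqrt{3})$. Hence $|X(Q)|<\tfrac{2}{\sqrt{3}}\sqrt{c_{4}/48}=\sqrt{c_{4}}/6$, and so $|x(Q)|\le\sqrt{c_{4}}/6+|b_{2}|/12$. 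We may assume the minimal equation of $E$ is standardized (a change of Weierstrass equation leaving $\Delta_{E}$ unchanged, hence leaving $h(E)$ and every canonical local height unchanged), so that $|b_{2}|\le 5$. Finally, from $c_{4}^{3}=j(E)\Delta_{E}$ one gets $\log c_{4}=\tfrac13\big(\log|j(E)|+\log|\Delta_{E}|\big)\le\tfrac13\big(h(j(E))+h(\Delta_{E})\big)\le 8h(E)$, whence $|x(Q)|\le\tfrac16 e^{4h(E)}+\tfrac{5}{12}$; a one-line case distinction (according as this is $\ge 1$ or not) gives $\log^{+}|x(Q)|\le 4h(E)$ for every $h(E)\ge 0$, so $h_{\infty}(Q)=\tfrac12\log^{+}|x(Q)|\le 2h(E)$.

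To finish, I would use the inequality $\widehat{h_{\infty}}(Q)\le h_{\infty}(Q)+\tfrac1{12}h(j(E))+1.07$ from \cite{Silverman-diff-height-can-height} (already invoked in the proof of Proposition \ref{All-lang-Hall-Lang}) together with the trivial bound $\tfrac1{12}h(j(E))\le h(E)$; this already yields $\widehat{h_{\infty}}(Q)\le 3h(E)+1.07$, a fortiori the stated bound. (Carrying out the same computation after passing to the integral short model $\mathcal{E}$ of Notation \ref{notation-minimal-equation-short-Weierstrass-equation-1}, rather than to the non-integral model above, is the variant that produces the displayed $\log 6$, since $\Delta_{\mathcal{E}}=6^{12}\Delta_{E}$.) The one genuinely delicate step is the root bound: one must exploit that a second real component forces $c_{6}^{2}$ to be small relative to $c_{4}^{3}$, so that $|x|$ on the egg is controlled by $\sqrt{c_{4}}$ and not merely by $\sqrt{|c_{4}|+|c_{6}|}$ — this is precisely what keeps the coefficient of $h(E)$ equal to $3$. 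Everything else is routine manipulation of the standard height identities and the archimedean local-height comparison.
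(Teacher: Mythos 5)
Your proposal is correct, and it shares the paper's two-step skeleton: bound $h_{\infty}(Q)$ on the bounded real component by trapping $x(Q)$ between roots of the cubic, then pass to $\widehat{h_{\infty}}(Q)$ via \cite[Theorem 5.5]{Silverman-diff-height-can-height} together with $\tfrac{1}{12}h(j(E))\le h(E)$. Where you genuinely differ is the execution of the first step: the paper moves to the integral short model $\mathcal{E}$ of Notation \ref{notation-minimal-equation-short-Weierstrass-equation-1} (so $\Delta_{\mathcal{E}}=6^{12}\Delta_{E}$) and bounds the three roots via Cardano's formula, getting $h_{\infty}(Q)\le 2h(E)+\log(6)$ on that model, whereas you stay with the (standardized) minimal equation, pass to its depressed cubic, and exploit positivity of the discriminant, $4(c_{4}/48)^{3}-27(c_{6}/864)^{2}=\Delta_{E}/16>0$, to rescale and obtain the clean root bound $|X|<\sqrt{c_{4}}/6$, then control $c_{4}$ through $c_{4}^{3}=j(E)\Delta_{E}$. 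Your route avoids Cardano and the $6^{12}$ rescaling and yields the marginally sharper $\widehat{h_{\infty}}(Q)\le 3h(E)+1.07$, of which the stated inequality is a weakening; the paper's route keeps all computations on an integral model, which is exactly where its $\log(6)$ comes from. The one step to flag explicitly is your application of \cite[Theorem 5.5]{Silverman-diff-height-can-height} to the standardized minimal equation rather than to $\mathcal{E}$: this is legitimate, and is how the paper itself invokes that theorem in the proof of Proposition \ref{Siegel-fort-cas-magnifie} (the canonical archimedean height being independent of the model, by the remark following the present proposition), but it is the only place where the choice of Weierstrass model matters, so it deserves a sentence. Your preliminary reduction to a standardized minimal equation (a change of variables with $u=\pm 1$ and $r,s,t\in\Z$, leaving $\Delta_{E}$, $j(E)$, $h(E)$ and $\widehat{h_{\infty}}$ unchanged) is likewise sound.
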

\begin{proof}
We use notation \ref{notation-minimal-equation-short-Weierstrass-equation-1}. Denote by $\alpha_{1}, \alpha_{2},\alpha_{3}$ the three roots of the polynomial $\widetilde{x}^{3}+a\widetilde{x}+b.$ Following the Cardan Formula there are two complex numbers $u_{i},v_{i}$ such that $\alpha_{i}=u_{i}+v_{i}$ and
$$\Delta_{\mathcal{E}} =-16\times 27\times (b + 2u_{i}^{3})^{2}=-16\times 27\times (b + 2v_{i}^{3})^{2}.$$
Since $-16\times 27b^{2} = \frac{(j(\mathcal{E})+1728 )\Delta_{\mathcal{E}}}{1728}$ and $\Delta_{\mathcal{E}}=6^{12}\Delta_{E}$ and $j(\mathcal{E})=j(E)$ we have
$$\begin{array}{rcl}
2|u_{i}|^{3}\le |b| +|b+2u_{i}^{3}|&\le &e^{6h(E)+6\log (6)}\left(\frac{1}{2^{4}\times 3^{3}}+\frac{e^{12h(E)}}{2^{10}\times 3^{6}}\right)^{1/2}\\
&&+\frac{e^{6h(E)+6\log (6)}}{12\sqrt{3}}\\
&\le &\frac{e^{6h(E)+6\log (6)}}{12\sqrt{3}} + \frac{e^{12h(E)+6\log (6)}}{864} +\frac{e^{6h(E)+6\log (6)}}{12\sqrt{3}}\\
&\le & \frac{e^{12h(E)+6\log (6)}}{4\sqrt{3}}.\\
\end{array}$$
In the same way, we prove that $|v_{i}|\le
\frac{e^{4h(E)+2\log (6)}}{2\times 3^{1/6}}$. An upper bound for $|\alpha_{i}|$ follows: $|\alpha_{i}|\le
\frac{e^{4h(E)+2\log (6)}}{3^{1/6}}.$
Since $|x(Q)|\le\displaystyle\max_{i=1}^{3}(|\alpha_{i}|)$ for every point
$Q$ in the bounded real connected component of $\mathcal{E}$ we get 
$$h_{\infty}(Q)\le 2 h(E) +\log (6)$$
We conclude by applying \cite[Theorem 5.5]{Silverman-diff-height-can-height} which asserts that 
$$\widehat{h_{\infty}}(Q)\le h_{\infty}(Q) + \frac{1}{12}h(j(\mathcal{E})) +1.07$$
for every point $Q\in\mathcal{E}(\Q )$.
\end{proof}
\begin{remark}
We keep the notation of the proof. While the archimedean height $h_{\infty}$ might not be the same for $E$ and for $\mathcal{E}$, the canonical archimedean $\widehat{h_{\infty}}$ does not depend on the choice of a model for the elliptic curve $E$.
\end{remark}

Now we consider the primality conjecture for an elliptic divisibility sequence associated to a point $P$ that is magnified by an isogeny $\sigma$ and a point $P'$ which assumed to be magnified. This case will be used to study  the primality conjecture for elliptic divisibility sequences associated to points belonging to the bounded real connected component of an elliptic curve.
\begin{proposition}\label{Siegel-fort-cas-magnifie}
We use notation \ref{notation-P-Pprime-sigma}. Let $\tau :E''\longrightarrow E$ be either an isogeny defined over $\Q$ (with $E''$ an elliptic curve defined over $\Q$ by a standardized minimal equation) or the identity map. If every prime factor of $B_{\sigma (\tau (P'))}$ divides $B_{\tau (P')}$, then 
\begin{equation}\label{equation-Siegel-fort-cas-doublement-maginifie}
\widehat{h_{\infty}}(P')\le 7h(E') + 8 + \log (\deg (\sigma\circ\tau )). 
\end{equation}
\end{proposition}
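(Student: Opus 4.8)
The plan is to split the magnifying map as $E''\xrightarrow{\ \tau\ }E'\xrightarrow{\ \sigma\ }E$ (so that $\deg(\sigma\circ\tau)=\deg(\sigma)\deg(\tau)$), to extract from the prime hypothesis a bound on $\psi_\sigma$ evaluated at $\tau(P')$, and then to run the quasiparallelogram law of Lemma~\ref{Parallelogram-law-version-isogeny} up and down the tower to convert this into a bound on $\widehat{h_\infty}(P')$. Write $d=\deg(\sigma)$ and $d'=\deg(\tau)$.

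First I would make the hypothesis quantitative. Because every prime factor of $B_{\sigma(\tau(P'))}$ divides $B_{\tau(P')}$, while Lemma~\ref{easy-factorization} gives $v(B_{\tau(P')})\le v(B_{\sigma(\tau(P'))})$ at every place and --- using that $E$ and $E'$ are minimal --- $v(B_{\sigma(\tau(P'))})\le v(B_{\tau(P')})+v(d)$ at each place of the common support, one gets $|B_{\sigma(\tau(P'))}|\le d\,|B_{\tau(P')}|$. Substituting into Proposition~\ref{proposition-link-EDS-division-polynomial}(2) applied to $\tau(P')\in E'(\Q)$ and cancelling $|B_{\tau(P')}|^{d-1}\ge 1$ yields the key estimate
\[
\log\left|\psi_\sigma(\tau(P'))\right|\le \log d+\frac{3}{2}\,d\,h(E').
\]

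Next I would combine this with Lemma~\ref{Parallelogram-law-version-isogeny} for $\sigma$ at $\tau(P')$, rewritten as
\[
d\,\widehat{h_\infty}(\tau(P'))=\log\left|\psi_\sigma(\tau(P'))\right|+\widehat{h_\infty}\bigl(\sigma(\tau(P'))\bigr)+\frac{\log|\Delta_E|-d\log|\Delta_{E'}|}{12},
\]
so that, using $\log|\Delta_E|\le 12h(E)$ and $\log|\Delta_{E'}|\ge 0$ and dividing by $d$, one bounds $\widehat{h_\infty}(\tau(P'))$ in terms of $h(E')$, $h(E)$ and $\widehat{h_\infty}(\sigma(\tau(P')))$. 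One then descends to $P'$: either directly from Lemma~\ref{Parallelogram-law-version-isogeny} for $\tau$ together with the quasiparallelogram law --- which, using the non-negativity of $\widehat{h_\infty}$, bounds $\widehat{h_\infty}(P')$ by $\widehat{h_\infty}(\tau(P'))$ plus a constant depending only on $h(E'')$, $h(E')$, $d'$ and the kernel points of $\tau$ --- or by applying the previous paragraph to the composite $\sigma\circ\tau$ via the factorization $\psi_{\sigma\circ\tau}(P')=\psi_\tau(P')^{d}\,\psi_\sigma(\tau(P'))$ of Lemma~\ref{chain_rule} and the integrality provided by Lemma~\ref{lemma-divisibility-property1}. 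Finally the curve heights $h(E)$ and $h(E'')$ are replaced by $h(E')$ through Proposition~\ref{lemma-pellarin} applied to $\sigma$, to its dual, and to $\tau$, at the cost of a term of size $O(\log(dd'))$ and a bounded absolute constant; collecting everything should produce the bound $7h(E')+8+\log\deg(\sigma\circ\tau)$.

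The step that the prime hypothesis does not control by itself is the bound on $\widehat{h_\infty}(\sigma(\tau(P')))$, the canonical archimedean height of the top point of the tower: a term with no new prime divisors may be an integral point, and integral points are unbounded at infinity. This is the crux, and it is where Proposition~\ref{weakerthan-canonical-height} is used --- in the setting of this section the top point lies in the bounded real connected component of $E(\R)$, whence $\widehat{h_\infty}(\sigma(\tau(P')))\le 3h(E)+\log 6+1.07$. I expect the genuine difficulty to be the bookkeeping that forces the degrees $d,d'$ to enter only additively as $\log(dd')$ while $h(E')$ keeps a fixed coefficient, together with the routine but fiddly reduction of all the discriminant and Faltings-height terms to $h(E')$ via Proposition~\ref{lemma-pellarin}, rather than any single inequality.
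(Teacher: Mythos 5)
Your opening step is fine and is exactly how the paper starts: Lemma \ref{easy-factorization} plus the hypothesis gives $B_{\sigma(\tau(P'))}\le \deg(\sigma)\,B_{\tau(P')}$, and Proposition \ref{proposition-link-EDS-division-polynomial} converts this into a bound of the shape $\log|\psi_{\sigma}(\tau(P'))|\le \log\deg(\sigma)+\tfrac32\deg(\sigma)h(E')$. The gap is precisely at what you call the crux. To run the identity of Lemma \ref{Parallelogram-law-version-isogeny} backwards you need an upper bound on $\widehat{h_\infty}(\sigma(\tau(P')))$, and you propose to get it from Proposition \ref{weakerthan-canonical-height} by asserting that ``in the setting of this section the top point lies in the bounded real connected component of $E(\R)$''. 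That is not a hypothesis of Proposition \ref{Siegel-fort-cas-magnifie}: $E(\R)$ need not even have two components, and where the proposition is actually invoked (Corollary \ref{application-cas-doublement-magnifie}, hence Theorem \ref{Bound-compositecase-N1-N3}(a)) no such assumption is available --- the bounded-component situation is treated separately in Proposition \ref{weakerthan-canonical-height} and Corollary \ref{cas-composante-bornee}. Moreover the hypothesis that every prime factor of $B_{\sigma(\tau(P'))}$ divides $B_{\tau(P')}$ is perfectly compatible with $\sigma(\tau(P'))$ being an integral point of arbitrarily large archimedean height, so no bound $\widehat{h_\infty}(\sigma(\tau(P')))=O(h(E))$ can be extracted from it. Without that bound your identity only shows $\widehat{h_\infty}(\tau(P'))$ is small \emph{if} the top height is small, which is the wrong direction; the same directional problem infects your descent from $\tau(P')$ to $P'$, since an upper bound for $\log|\psi_\tau(P')|$ already costs roughly $2(\deg\tau-1)h_\infty(P')$, which swamps the term $\deg(\tau)\,\widehat{h_\infty}(P')$ you are trying to control.

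The idea you are missing is that the smallness of the division-polynomial value must be exploited factor by factor, not through the height identity. The paper bounds the ratio $\psi_{\sigma\circ\tau}^2(P')/\psi_\tau^2(P')=\frac{d_{\sigma\circ\tau}^2}{d_\tau^2}\prod_{T\in\ker(\sigma\circ\tau),\,T\notin\ker(\tau)}\left|x(P')-x(T)\right|$ by $\deg(\sigma)^2e^{3\deg(\sigma\circ\tau)h(E')}$, using your first step together with both inequalities of Proposition \ref{proposition-link-EDS-division-polynomial}; since the nearest kernel point $T_0\notin\ker(\tau)$ minimizes every factor, this forces $|x(P')-x(T_0)|\le e^{2+6h(E')}$. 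The coordinate $x(T_0)$ of this torsion point is then bounded explicitly by \cite[Lemme 10.1]{David} (this is where the $\log\deg(\sigma\circ\tau)$ in the statement comes from), so $h_\infty(P')$ is bounded, and \cite[Theorem 5.5]{Silverman-diff-height-can-height} turns this into the stated bound on $\widehat{h_\infty}(P')$. At no stage is any control of the archimedean height of the image point needed, which is exactly what makes the statement usable for general magnified points.
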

\begin{proof}
Assume every prime factor of $B_{\sigma (\tau (P'))}$ divides $B_{\tau (P')}$. Let $T_{0}~\notin~\ker (\tau)$ be a a $\sigma\circ\tau$-torsion point such that  $|x(P')-x(T_{0})|\le |x(P')- x(T)|$ for every $\sigma\circ\tau$-torsion point $T\notin\ker (\tau )$. Since the leading coefficient $d_{\sigma\circ\tau}$ of $\psi_{\sigma\circ\tau}$ is an integer divisible by the leading coefficient $d_{\tau}$ of $\psi_{\tau}$we have 
$$\left| x(P') -x(T_{0})\right|^{\deg (\sigma\circ\tau )-\deg (\tau )}\le\frac{d_{\sigma\circ\tau}^{2}}{d_{\tau}^{2}}\displaystyle\prod_{T\in\ker (\sigma\circ\tau ), T\notin\ker (\tau )}\left| x(P')- x(T)\right| = \frac{\psi_{\sigma\circ\tau}^{2}(P')}{\psi_{\tau}^{2}(P')}$$
From this inequality and Proposition \ref{proposition-link-EDS-division-polynomial} we deduce that
$$\begin{array}{rcl}
\left| x(P') - x(T_{0})\right|^{\deg (\sigma\circ\tau )-\deg (\tau )}&\le &\frac{\left| B_{\sigma\circ\tau (P')}\right|^{2}}{\left|\psi_{\tau} (P')\right|^{2} \left| B_{P'}\right|^{2\deg (\sigma\circ\tau )}}e^{3\deg (\sigma\circ\tau ) h(E')}\\
&\le &\frac{\left| B_{\sigma\circ\tau (P')}\right|^{2}}{\left| B_{\tau (P')}\right|^{2}}e^{3\deg (\sigma\circ\tau ) h(E')}.\\
\end{array}$$
Applying Lemma \ref{easy-factorization} we get
$$\begin{array}{rcl}
\left| x(P') - x(T_{0})\right|^{(\deg (\sigma )-1)\deg (\tau )}&\le &\deg (\sigma )^{2}e^{3\deg (\sigma\circ\tau ) h(E')}\\
&\le &e^{2\deg (\sigma )-2}e^{3\deg (\sigma\circ\tau ) h(E')}\\
\end{array}$$
and in particular $|x(P')-x(T_{0})|\le e^{2+6 h(E')}.$ The triangular inequality gives
\begin{equation}\label{Roth-final-cas-doublement-magnifie}
|x(P')|\le 2\max\left\{| x(T_{0})|,e^{2+6 h(E')}\right\}
\end{equation}
Let $\mathcal{E}'$ be the model for $E'$ deduced from the change of variable $(\widetilde{x},\widetilde{y})= (36x + 3a_{1}^{2} + 12 a_{2}, 216y +108 a_{1}x + 108 a_{3})$. The curve $\mathcal{E}'$ is also the model for $E'$ considered in notation \ref{notation-minimal-equation-short-Weierstrass-equation-1}. Inequation (\ref{bound-David-height-naive-height}) and \cite[Lemme 10.1]{David} give
$$\left| 36x(T_{0})\right| -15 \le\left| 36x(T_{0}) + 3a_{1}^{2} + 12 a_{2}\right|\le 480\deg (\sigma\circ\tau )^{2}e^{12h(E') + 5\log (6)}$$
It follows from Inequality (\ref{Roth-final-cas-doublement-magnifie}) and \cite[Theorem 5.5]{Silverman-diff-height-can-height} that 
$$\widehat{h_{\infty}}(P')\le h_{\infty}(P') + h(E')+\frac{1}{2}\log (2) + 1.07\le 7h(E') + 8 + \log (\deg (\sigma\circ\tau ))$$
(note that \cite[Theorem 5.5]{Silverman-diff-height-can-height} is applied to a standardized equation and that $h_{\infty}(P')=\frac{1}{2}\log\max\{ 1,|x(P')|\}$).
\end{proof}

\begin{corollary}\label{application-cas-doublement-magnifie}
Let $E_{0}, E_{1}, E_{2}, E_{3}$ be four elliptic curves defined over $\Q$ by standardized minimal equations. For each $i\in\{ 1,2,3\}$ let $\tau_{i}: E_{i-1}\longrightarrow E_{i}$ be an isogeny defined over $\Q$. Let $P'\in E_{0}(\Q )$ be a point with infinite order such that $B_{(\tau_{3}\circ\tau_{2}\circ\tau_{1})(P')}$ has two distinct prime factors coprime to $B_{P'}$. Then for each index $i$ we have
$$\begin{array}{cl}
\textrm{either }&\sqrt{\deg (\tau_{i})} \le\frac{2}{\sqrt{\widehat{h}(P')}}\log\left(\frac{2}{\sqrt{\widehat{h}(P')}}\right)\\
\textrm{ or }&\sqrt{\deg (\tau_{i})}\le\frac{144}{\sqrt{\widehat{h}(P')}}+ 2\sqrt{1+ \frac{128h(E_{0}) + 135}{\widehat{h}(P')}} .\\
\end{array}$$
\end{corollary}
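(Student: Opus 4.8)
The plan is to reduce everything to a single inequality on the total degree $D:=\deg(\tau_3\circ\tau_2\circ\tau_1)=\deg(\tau_1)\deg(\tau_2)\deg(\tau_3)$ and then to quote Lemma \ref{solution-bound-nsquare-logn}. Since $\deg(\tau_i)\mid D$ for every $i$, a bound of the form $\sqrt D\le\max\{\cdots\}$ immediately gives the asserted dichotomy for each $\tau_i$, so it is enough to produce such a bound for $D$.

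Write $Q_j:=(\tau_j\circ\cdots\circ\tau_1)(P')$ for $0\le j\le 3$, so that $Q_0=P'$ and $Q_3=(\tau_3\circ\tau_2\circ\tau_1)(P')$; Lemma \ref{easy-factorization} yields $B_{P'}\mid B_{Q_1}\mid B_{Q_2}\mid B_{Q_3}$, so ``being coprime to $B_{P'}$'' is an increasing property along the chain. First I would split $\tau_3\circ\tau_2\circ\tau_1=\sigma\circ\tau$ with $\tau=\tau_j\circ\cdots\circ\tau_1\colon E_0\to E_j$ and $\sigma=\tau_3\circ\cdots\circ\tau_{j+1}\colon E_j\to E_3$, choosing $j$ minimal such that every prime factor of $B_{Q_3}$ already divides $B_{\tau(P')}=B_{Q_j}$. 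The hypothesis on the prime factors of $B_{Q_3}$ together with the divisibilities above is exactly what makes such a splitting available while keeping $\sigma$ a genuine isogeny; in the borderline configurations one first peels off an offending prime and reruns the bookkeeping on the truncated chain, or passes to the dual chain $\widehat{\tau_1}\circ\widehat{\tau_2}\circ\widehat{\tau_3}$ landing at $[D]P'$. Proposition \ref{Siegel-fort-cas-magnifie} applied to $\sigma\circ\tau$ and $P'$ then gives
$$\widehat{h_{\infty}}(P')\le 7\,h(E_0)+8+\log D .$$

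Next I would convert this archimedean estimate into a bound on $\widehat h(Q_3)=D\,\widehat h(P')$. Decomposing the canonical height into local heights, the archimedean contribution is read off from the displayed inequality by way of the quasi-parallelogram law (Lemma \ref{Parallelogram-law-version-isogeny}) and \cite[Theorem 5.5]{Silverman-diff-height-can-height}; the non-archimedean contribution is handled by Lemma \ref{easy-factorization} again, which bounds the total contribution of the places dividing $B_{P'}$ by $\widehat h(P')+\log D$ up to the $O(h(E_0))$ discrepancy between $v(B_{\cdot})$ and $\widehat{h_{v}}$ recorded in \cite[Chapter III, Theorem 4.5]{Lang}, the remaining (few) places supporting the extra prime factors being controlled through Proposition \ref{proposition-link-EDS-division-polynomial}. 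Inserting $h(E_3)\le 16\,h(E_0)+\log D+15.8$ from Proposition \ref{lemma-pellarin} (applied to the dual isogeny) and collecting constants, this should produce an inequality of the shape
$$D\,\widehat h(P')\le 72\log D+4\,\widehat h(P')+512\,h(E_0)+684 ,$$
that is, $D\le a(\log\sqrt D+1)+b$ with $a=144/\widehat h(P')$ and $b=4+(512\,h(E_0)+540)/\widehat h(P')$.

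Finally I would feed this into Lemma \ref{solution-bound-nsquare-logn} with $d=1$, $n=\sqrt D$ and the free parameter $A=1/\sqrt{\widehat h(P')}$: the two members of the resulting maximum are precisely $\frac{2}{\sqrt{\widehat h(P')}}\log\frac{2}{\sqrt{\widehat h(P')}}$ and $\frac{144}{\sqrt{\widehat h(P')}}+2\sqrt{1+\frac{128\,h(E_0)+135}{\widehat h(P')}}$, so $\sqrt D$, hence $\sqrt{\deg(\tau_i)}$ for every $i$, satisfies one of the two alternatives. I expect the hard part to be the second step: the combinatorial case analysis pinning down the correct splitting of the chain in each configuration of the extra prime factors (in particular when they only appear at the last isogeny, forcing one onto the dual chain), together with keeping the bad-reduction error terms under control when passing from the bound on $\widehat{h_{\infty}}(P')$ to the bound on $\widehat h(Q_3)$; the precise values of the constants $128$ and $684$ in the statement are dictated entirely by how generously those error terms are estimated.
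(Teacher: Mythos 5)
Your outline assembles the right ingredients (Proposition \ref{Siegel-fort-cas-magnifie}, Lemma \ref{easy-factorization}, Proposition \ref{lemma-pellarin}, Lemma \ref{solution-bound-nsquare-logn} with $A=1/\sqrt{\widehat h(P')}$, and even the final constants), but the middle of the argument has a genuine gap. You apply Proposition \ref{Siegel-fort-cas-magnifie} to the composite $\sigma\circ\tau$ \emph{with base point $P'$}, which only yields a bound on $\widehat{h_{\infty}}(P')$, the archimedean height of the small point. That estimate cannot be converted into a bound on $\widehat h(Q_3)=D\,\widehat h(P')$: Lemma \ref{Parallelogram-law-version-isogeny} gives $\widehat{h_{\infty}}(Q_3)=\deg(\sigma\circ\tau)\,\widehat{h_{\infty}}(P')-\log|\psi_{\sigma\circ\tau}(P')|+O(h)$, so an upper bound on $\widehat{h_{\infty}}(Q_3)$ would require a \emph{lower} bound on $|\psi_{\sigma\circ\tau}(P')|$, which is not available; and the local height of $Q_3$ at the single extra prime $l$ is not controlled by the hypothesis at all (if $l$ already divides $B_{Q_1}$ it may do so to a huge power, so $\widehat{h_{l}}(Q_3)$ can be of size $d_1\widehat h(P')$, not $O(\log D)$). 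Consequently your claimed intermediate inequality $D\,\widehat h(P')\le 72\log D+4\,\widehat h(P')+512\,h(E_0)+684$ does not follow from the cited tools; it is in fact strictly stronger than what the method gives, since it bounds the full product $d_1d_2d_3$ rather than the maximal degree.

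The paper's proof is organized differently at exactly this point. It first reorders the chain (replacing $\tau_i$ by $(\tau_{i+1})_{\tau_i}$ as in Notation \ref{pullback-isogeny}) so that $d_1\ge\max\{d_2,d_3\}$; it then splits according to where the exceptional prime $l$ first appears. If $l\mid B_{\tau_1(P')}$, Proposition \ref{Siegel-fort-cas-magnifie} is applied with base point $(\tau_2\circ\tau_1)(P')$ and $\sigma=\tau_3$, so the archimedean bound lands on the very point whose canonical height is $d_1d_2\widehat h(P')$, and the descent of Theorem \ref{theorem-general-bound-PSD-terms} run on the segment $\tau_1(P')\mapsto(\tau_2\circ\tau_1)(P')$ (where every prime factor of the larger term divides the smaller) bounds $d_1d_2$; if $l\nmid B_{\tau_1(P')}$, the proposition is applied with base point $\tau_1(P')$ (with $\sigma=\tau_2$ or $\sigma=\tau_3,\ \tau=\tau_2$ according to whether $l\mid B_{\tau_2\tau_1(P')}$), and the descent on $P'\mapsto\tau_1(P')$ bounds $d_1$. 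In every case only $d_1$ (or $d_1d_2$) is bounded, and the conclusion for each $\tau_i$ follows solely from the maximality of $d_1$ — a step your proposal omits entirely. The case analysis you defer as ``peeling off an offending prime'' or ``passing to the dual chain'' is thus not a borderline nuisance but the actual content of the proof, and the contribution of $l$ is never estimated, only located; without these two devices (reordering, and applying the Siegel-type bound to the intermediate point) the argument does not close.
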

\begin{proof}We denote by $d_{i}$ the degree $d_{i} :=\deg (\tau_{i})$ of $\tau_{i}$. Replacing $\tau_{i}$ with $\left( \tau_{i+1}\right)_{\tau_{i}}$ if needed (see notation \ref{pullback-isogeny} for details), we can assume without loss of generality that $d_{1}\ge\max\{d_{2}, d_{3}\} .$ 

Assume for now that $l$ divides $B_{\tau_{1}(P')}$. Following lemma \ref{easy-factorization}, the prime $l$ divides  $B_{\left(\tau_{2}\circ\tau_{1}\right)(P')}$. Thus each prime factor of $B_{\left(\tau_{3}\circ\tau_{2}\circ\tau_{1}\right)(P')}$ divides $B_{\left(\tau_{2}\circ\tau_{1}\right)(P')}$. Since $\log (d_{3})\le\log (d_{1})$, Proposition \ref{Siegel-fort-cas-magnifie} gives 
$$\widehat{h_{\infty}}\left(\left(\tau_{2}\circ\tau_{1}\right)(P')\right)\le 7h(E_{2}) + 8 + \log (d_{1}).$$
Each prime factor of $B_{\left(\tau_{2}\circ\tau_{1}\right)(P')}$ divides $B_{\tau_{1}(P')}$. In particular the following analog to Inequation (\ref{theorem-general-bound-PSD-terms-2nd-part-ineq3}) holds: 
$$\frac{d_{1}d_{2}\widehat{h}(P')}{4}\le\left( d_{2}-1\right)(d_{1}-1)\widehat{h}(P')\le 8h(E_{2}) + 8 + \log (d_{1}) + \log (d_{2}) + \widehat{h}(P').$$
Following Proposition \ref{lemma-pellarin} this inequality implies that 
$$d_{1}d_{2}\widehat{h}(P')\le 4\times \left(128h(E_{0}) + 135 + 9\log (d_{1}d_{2}) + \widehat{h}(P')\right) .$$
Applying Lemma \ref{solution-bound-nsquare-logn} with $n=\sqrt{d_{1}d_{2}}$ and $A=\frac{1}{\sqrt{\widehat{h}(P')}}$ we get that either $\sqrt{d_{1}d_{2}}\le \frac{2}{\sqrt{\widehat{h}(P')}}\log\left(\frac{2}{\sqrt{\widehat{h}(P')}}\right)$ or $\sqrt{d_{1}d_{2}}\le \frac{144}{\sqrt{\widehat{h}(P')}} + 2\sqrt{1+ \frac{128h(E_{0})+135}{\widehat{h}(P')}}.$

Assume now that $l$ does not divide $B_{\tau_{1}(P')}$. If $l$ does not divide $B_{\tau_{2}\circ\tau_{1}(P')}$, then every prime factor of $B_{\left(\tau_{2}\circ\tau_{1}\right)(P')}$ divides $B_{\tau_{1}(P')}$. In that case, since $\log (d_{2})\le\log (d_{1})$, Proposition \ref{Siegel-fort-cas-magnifie} gives 
$$\widehat{h_{\infty}}\left(\tau_{1}(P')\right)\le 7h(E_{1}) + 8 + \log (d_{1}).$$
if $l$ divides $B_{\tau_{2}\circ\tau_{1}(P')}$ then every prime factor of $B_{\tau_{3}\circ\tau_{2}\circ\tau_{1}(P')}$ divides $B_{\tau_{2}\circ\tau_{1}(P')}$. In that case, since $\log (d_{2}d_{3})\le 2\log (d_{1})$, 
Proposition \ref{Siegel-fort-cas-magnifie} (applied with $\sigma :=\tau_{3}$ and $\tau :=\tau_{2}$) gives
$$\widehat{h_{\infty}}\left(\tau_{1}(P')\right)\le 7h(E_{1}) + 8 + 2\log (d_{1}).$$
In both cases, $l$ being coprime to $B_{\tau_{1}(P')}$, each prime factor of $B_{\tau_{1}(P')}$ divides $B_{P'}$. In particular the following analog to inequality (\ref{theorem-general-bound-PSD-terms-1st-part-ineq3}) follows using Proposition \ref{lemma-pellarin}:
$$d_{1}\widehat{h}(P')\le 128h(E_{0}) + 135 + 10\log (d_{1}) + \widehat{h}(P')$$
Applying Lemma \ref{solution-bound-nsquare-logn} with $n=\sqrt{d_{1}}$ and $A=\frac{1}{\sqrt{\widehat{h}(P')}}$ we get that either $\sqrt{d_{1}}~\le~\frac{2}{\sqrt{\widehat{h}(P')}}\log\left(\frac{2}{\sqrt{\widehat{h}(P')}}\right)$ or $\sqrt{d_{1}}\le\frac{20}{\sqrt{\widehat{h}(P')}} + \sqrt{1+ \frac{128h(E_{0})+135}{\widehat{h}(P')}}.$
\end{proof}

\begin{corollary}\label{cas-composante-bornee}
We use notation \ref{notation-P-Pprime-sigma}. We assume that $E(\R )$ has two connected component and that $\deg (\sigma )$ is odd. We assume that $P=\sigma (P')$ belongs to the bounded connected component of $E (\R)$. Then $B_{nP}$ has two distinct prime factors coprime to $B_{P'}$ for every integer $n$ such that 
$$\begin{array}{cl}
\textrm{either }&n >\frac{4}{\sqrt{\widehat{h}(P')}}\log\left(\frac{2}{\sqrt{\widehat{h}(P')}}\right)\\
\textrm{ or }&n>\frac{288}{\sqrt{\widehat{h}(P')}}+ 4\sqrt{1+ \frac{128h(E') + 135}{\widehat{h}(P')}} .\\
\end{array}$$
\end{corollary}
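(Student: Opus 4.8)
The proof will combine the analysis carried out in the proof of Corollary \ref{application-cas-doublement-magnifie} with the control on archimedean heights furnished by Proposition \ref{weakerthan-canonical-height}; the extra factor $2$ in the bounds will come from reducing an even index to its odd part.

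I would begin with a topological observation. An isogeny maps the connected component of the identity onto the connected component of the identity, so $\sigma(E'(\R)^{0})\subseteq E(\R)^{0}$; since $P=\sigma(P')$ does not lie in $E(\R)^{0}$, the group $E'(\R)$ must have two connected components, $P'$ lies in the bounded one, and $\sigma$ carries the bounded component of $E'(\R)$ into the bounded component of $E(\R)$. Consequently, for every odd integer $m$ the points $mP'$ and $mP=\sigma(mP')$ lie in the bounded components of $E'(\R)$ and $E(\R)$, so Proposition \ref{weakerthan-canonical-height} gives $\widehat{h_{\infty}}(mP')\le 3h(E')+\log 6+1.07$ and $\widehat{h_{\infty}}(mP)\le 3h(E)+\log 6+1.07$; applying Proposition \ref{lemma-pellarin} to $\sigma$ (and to its dual) one can replace $h(E)$ throughout by an affine function of $h(E')$, up to a $\log\deg(\sigma)$ term which will be absorbed later.

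Next I would argue by contradiction. Assume $B_{nP}$ has at most one prime factor coprime to $B_{P'}$; since $B_{P'}\mid B_{nP'}\mid B_{nP}$ by Lemma \ref{easy-factorization}, the sequence term $B_{nP'}$ also has at most one prime factor coprime to $B_{P'}$. Write $n=2^{a}m$ with $m$ odd. If $m$ exceeds the bound obtained in the odd case below, then $B_{mP}$ already has two distinct prime factors coprime to $B_{P'}$, and as $B_{mP}\mid B_{nP}$ we are done; so we may assume $m$ is at most that bound. For the odd index $m$ I would run the dichotomy used in the proof of Corollary \ref{application-cas-doublement-magnifie}: according to whether the unique possible new prime of $B_{mP}$ divides $B_{mP'}$, either every prime factor of $B_{\sigma(mP')}$ divides $B_{mP'}$ (so Proposition \ref{Siegel-fort-cas-magnifie}, or simply Proposition \ref{weakerthan-canonical-height}, bounds $\widehat{h_{\infty}}(mP')$) or every prime factor of $B_{mP'}$ divides $B_{P'}$ (so Proposition \ref{weakerthan-canonical-height} bounds $\widehat{h_{\infty}}(mP')$ directly). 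In either case, decomposing $\widehat{h}(mP')=m^{2}\widehat{h}(P')$ into local heights, bounding the non-archimedean contributions with Lemma \ref{easy-factorization} exactly as in Theorem \ref{theorem-general-bound-PSD-terms}, and replacing $h(E)$ by $h(E')$ via Proposition \ref{lemma-pellarin}, I expect to reach an inequality of the shape $m^{2}\widehat{h}(P')\le c_{1}(\log m+1)+c_{2}h(E')+c_{3}$ with absolute constants $c_{1},c_{2},c_{3}$. Feeding this into Lemma \ref{solution-bound-nsquare-logn} with the choice $A=1/\sqrt{\widehat{h}(P')}$ (rather than $A=1$) then yields $m\le\frac{2}{\sqrt{\widehat{h}(P')}}\log\!\left(\frac{2}{\sqrt{\widehat{h}(P')}}\right)$ or $m\le\frac{144}{\sqrt{\widehat{h}(P')}}+2\sqrt{1+\frac{128h(E')+135}{\widehat{h}(P')}}$; doubling to pass from $m$ to $n=2^{a}m$ gives the asserted constants $4$, $288$ and $4$.

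The main obstacle will be making the reduction from an even index to its odd part completely uniform, in particular covering the residual case in which the odd part $m$ is small while $n=2^{a}m$ is large (for instance $n$ a power of $2$). This is where I expect the hypothesis $\deg(\sigma)$ odd to be used: it makes $\deg(\sigma)$ coprime to $2^{a}$, so the pull-back construction of Notation \ref{pullback-isogeny} applies and lets one realise $nP$ as the value at $P'$ of a chain of three isogenies one of whose degrees is a power of $n$, after which Corollary \ref{application-cas-doublement-magnifie} bounds that degree and hence $n$. A secondary technical point is checking that the archimedean estimate coming from the bounded component is strong enough to feed Lemma \ref{solution-bound-nsquare-logn} with $A=1/\sqrt{\widehat{h}(P')}$ and to absorb all dependence on $\deg(\sigma)$ into the logarithmic term; this works precisely because Proposition \ref{weakerthan-canonical-height} involves no $\deg(\sigma)$.
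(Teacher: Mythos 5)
Your handling of odd indices is essentially the paper's proof: the topological observation that $\sigma(E'(\R)^{0})\subseteq E(\R)^{0}$ forces $P'$, and hence $mP'$ and $mP$ for odd $m$, onto the bounded components, after which Proposition \ref{weakerthan-canonical-height} bounds the archimedean heights, the dichotomy plus the local-height bookkeeping of Theorem \ref{theorem-general-bound-PSD-terms} and Proposition \ref{lemma-pellarin} give an inequality of the form $m^{2}\widehat{h}(P')\le c_{1}\log m + c_{2}h(E')+c_{3}$, and Lemma \ref{solution-bound-nsquare-logn} with $A=1/\sqrt{\widehat{h}(P')}$ finishes. (One small slip there: in the branch where every prime factor of $B_{m\sigma(P')}$ divides $B_{mP'}$, the height you must control is $\widehat{h_{\infty}}(mP)$, not $\widehat{h_{\infty}}(mP')$; the paper gets it again from Proposition \ref{weakerthan-canonical-height}, since $mP$ also lies on the bounded component of $E(\R)$ for odd $m$, and then trades $h(E)$ for $h(E')$ via Proposition \ref{lemma-pellarin}.)

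The genuine gap is the even case. Your primary route --- write $n=2^{a}m$ with $m$ odd, bound $m$ by the odd-index bound, then ``double'' --- does not bound $n$: the odd part being small says nothing about $2^{a}$, and doubling only covers $a\le 1$. You flag this residual case yourself, but the repair you sketch is not pinned down (``one of whose degrees is a power of $n$''), and if it means bounding the $2$-power part and the odd part separately by two applications of Corollary \ref{application-cas-doublement-magnifie}, you would only get $n\le$ (bound on $2^{a}$)$\times$(bound on $m$), a product which overshoots the stated bound. What the paper does, for every even $n$ and with no odd-part reduction at all, is apply Corollary \ref{application-cas-doublement-magnifie} to the chain $\tau_{1}=[n/2]$, $\tau_{2}=[2]$, $\tau_{3}=\sigma$: since $\deg\tau_{1}=(n/2)^{2}$, the corollary bounds $\sqrt{\deg\tau_{1}}=n/2$ in one stroke, and doubling that single bound is precisely where the constants $4$, $288$ and $4$ come from (twice the constants $2$, $144$, $2$ of Corollary \ref{application-cas-doublement-magnifie}); the odd case yields a smaller bound. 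So the factor $2$ is not produced by passing from the odd part $m$ back to $n$, but by bounding $n/2$ as a whole in the even case; your argument needs to be rewritten along these lines to reach the stated constants.
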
	
\begin{proof}
When $n$ is even Corollary \ref{cas-composante-bornee} follows from Corollary \ref{application-cas-doublement-magnifie} applied with $\tau_{1}= n/2$ and $\tau_{2} = 2. $ We assume now that $n$ is odd.

Since $\sigma$ is an isogeny with odd degree and $E(\R )$ has two connected components, $E'(\R )$ has also two connected components. Moreover, $\sigma (P')$ being on the bounded connected component of $E(\R )$, the point $P'$ is on the bounded connected component of $E'(\R ).$

The index $n$ being odd,  the points $nP'$ and $nP=n\sigma (P')$ are respectively in the bounded connected components of $E'(\R )$ and $E(\R )$. Since $\deg (\sigma )\ge 3$, applying Proposition \ref{weakerthan-canonical-height}, we get two analogs to inequalities (\ref{theorem-general-bound-PSD-terms-1st-part-ineq3}) and (\ref{theorem-general-bound-PSD-terms-2nd-part-ineq3}):
$$n^{2}\widehat{h}(P')\le 4h(E') + \widehat{h}(P') + 2\log (n) +\log (6) + 1.07$$
if every prime factor of $B_{nP'}$ divides $B_{P'}$ and (using Proposition \ref{lemma-pellarin})
$$\begin{array}{rcl}
n^{2}(\deg (\sigma ) -1)\widehat{h}(P')&\le &4h(E) + \widehat{h}(\sigma (P')) + \log (6) + 1.07 + \log (n^{2}\deg (\sigma ))\\
&\le &128 h(E') + \widehat{h}(\sigma (P')) + 67 + 5\log (\deg (\sigma )) + 2\log (n).\\
\end{array}$$
if every prime factor of $B_{n\sigma (P')}$ divides $B_{nP'}$. We conclude the proof applying Lemma \ref{solution-bound-nsquare-logn} with $A=\frac{1}{\sqrt{\widehat{h}(P')}}$.
\end{proof}

\section{Elliptic divisibility sequences and linear forms in elliptic logarithms.}\label{section-quatre}

Since no effective version of Siegel's theorem is known, we can not hope to get an explicit uniform bound on the index of prime power terms in an elliptic divisibility sequence. However an explicit nonuniform bound can be computed using work of David on lower bounds on linear forms in elliptic logarithms.

\begin{snotation}\label{notation-minimal-equation-short-Weierstrass-equation}
We use notation \ref{notation-minimal-equation-short-Weierstrass-equation-1}. We consider the map $\phi$ defined on the unbounded component $\mathcal{E}(\R )_{0}$ of $\mathcal{E}$ by the formula
$$\phi (P)=\phi_{\mathcal{E}}(P):=\textrm{Sign}(\widetilde{y}(P))\int_{\widetilde{x}(P)}^{+\infty}\frac{dt}{\sqrt{t^{3}+at+b}}.$$
The map $\phi$ is linked to the archimedean height by the following inequality (see \cite[section 3, Inequality 2]{Stroeker-Tzanakis}): for every point $P\in\mathcal{E}(\R)_{0}$ we have
\begin{equation}\label{link-archim-height-log-elliptic}
-\log\left|\phi (P)\right|- \frac{1}{2}\log (2) \le h_{\infty}(P)\le -\log\left|\phi (P)\right|+ \frac{5}{2}\log (2).
\end{equation}
Let $\wp$ be the Weierstrass $\wp$-function relative to the elliptic curve $\mathcal{E}$. Let $T_{0}\in\mathcal{E}(\R )$ be the real $2$-torsion point with the highest $x$-coordinate. Let $P\in \mathcal{E}(\Q )$ be a point in the unbounded connected component $\mathcal{E}(\R )_{0}$ of $\mathcal{E}(\R )$. Then $\wp\left(\frac{\phi (P)}{2\phi (T_{0})}\right) = \frac{x(P)}{4}$, and for every $n\in\Z$ there is an integer $m$ such that 
$$\phi (nP)=n\phi (P) + 2m\phi (T_{0}).$$
Moreover, since $|\phi (nP)| < |\phi (T_{0})|$ and $|\phi (P)| < |\phi (T_{0})|$, we have $|m|\le |n|.$
\end{snotation}

\subsection{David's lower bounds on linear forms in elliptic logarithms.}
	
\begin{slemma}\label{Application-Sinnou-David}
Let $E$ be an elliptic curve defined over $\Q$ by a minimal Weierstrass equation with integral coefficients. Let $P\in E(\Q )$ be a point on $E$. For any integer $n>0$ denote by $b_{n}$ the maximum 
$$b_{n}:=\max\left\{\log |2n|,2\widehat{h}(P),12eh(E) +5e\log (6)\right\} .$$
Then for any integer $n>1$ the inequality
$$\widehat{h_{\infty}}(nP)\le c_{1} (b_{n}+\log (3) + 1)^{6} + c_{2}.$$
holds with $c_{1}=5.9\times 10^{43}$ and $c_{2}=h(E) +2.81.$
\end{slemma}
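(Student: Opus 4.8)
The plan is to reduce the estimate on $\widehat{h_\infty}(nP)$ to an application of David's theorem on lower bounds for linear forms in elliptic logarithms, the version recorded in \cite[Th\'eor\`eme 2.1]{David}, applied to the short Weierstrass model $\mathcal{E}$ from Notation \ref{notation-minimal-equation-short-Weierstrass-equation-1}. The starting point is the observation (from Notation \ref{notation-minimal-equation-short-Weierstrass-equation}) that the canonical archimedean height does not depend on the model, so it suffices to bound $\widehat{h_\infty}(nP)$ on $\mathcal{E}$; and one may assume $nP$ lies in the unbounded real component $\mathcal{E}(\R)_0$ (otherwise Proposition \ref{weakerthan-canonical-height} already gives a much stronger bound, so the claimed inequality holds trivially). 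On $\mathcal{E}(\R)_0$ the elliptic logarithm $\phi$ satisfies $\phi(nP)=n\phi(P)+2m\phi(T_0)$ with $|m|\le|n|$, so $\phi(nP)$ is genuinely a linear form in the two elliptic logarithms $\phi(P)$ and $\phi(T_0)$ with integer coefficients $n$ and $2m$ of size $\le 2n$.

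The key steps, in order: (i) Feed this linear form into David's lower bound. The relevant input parameters in \cite{David} are: the height of the curve, controlled by Inequality \eqref{bound-David-height-naive-height}, i.e.\ by $12h(E)+5\log 6$; the heights of the points $P$ and $T_0$ appearing in the logarithms, controlled by $\widehat{h}(P)$ (for $P$) and again by the curve data via \cite[Lemme 10.1]{David} (for the $2$-torsion point $T_0$); and the coefficient size $\log|2n|$. This is exactly why $b_n$ is defined as the maximum of $\log|2n|$, $2\widehat{h}(P)$, and $12eh(E)+5e\log 6$ — it is the single quantity that dominates all of David's parameters simultaneously (the factor $e$ absorbing the normalization constants in David's statement). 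David's theorem then yields a lower bound of the shape
$$-\log|\phi(nP)| \le c_1\,(b_n+\log 3+1)^6 + (\text{something of size } h(E)),$$
the exponent $6$ being $2\rho^2$ with $\rho=2$ the number of logarithms, which is the source of the sixth power. (ii) Convert this lower bound on $|\phi(nP)|$ into an upper bound on $h_\infty(nP)$ via the elementary comparison \eqref{link-archim-height-log-elliptic}, $h_\infty(nP)\le -\log|\phi(nP)|+\tfrac52\log 2$. (iii) Pass from $h_\infty$ on $\mathcal{E}$ to $\widehat{h_\infty}$ on $\mathcal{E}$ (equivalently on $E$) using \cite[Theorem 5.5]{Silverman-diff-height-can-height}, which costs only an additive term of the form $\tfrac1{12}h(j(\mathcal{E}))+1.07$, again absorbed into the constant $c_2=h(E)+2.81$ (here using $h(E)\le h(\mathcal{E})\le h(E)+\log 6$ and $h(j(\mathcal{E}))=h(j(E))\le 12h(E)$).

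The genuinely delicate part is step (i): one must check that the hypotheses of \cite[Th\'eor\`eme 2.1]{David} are met for the linear form $n\phi(P)+2m\phi(T_0)$ (in particular that this elliptic logarithm is nonzero — which holds since $nP\ne 0_E$ as $P$ has infinite order and $\phi$ is injective on a neighbourhood of $0$ in $\mathcal{E}(\R)_0$), and then to track the numerical constants in David's bound carefully enough to arrive at the clean values $c_1=5.9\times 10^{43}$ and $c_2=h(E)+2.81$. This is where the bookkeeping is heaviest: David's statement involves several auxiliary parameters ($E$, $V_i$, $\log B$, etc.) and one needs the uniform-in-$n$ majorization of each by a simple function of $b_n$, together with the observation $|m|\le n$ so that $\max(|n|,|2m|)\le 2n$ and $\log$ of the coefficients is $\le b_n$. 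Everything else is routine substitution; the only real obstacle is keeping the explicit constants honest through David's inequality.
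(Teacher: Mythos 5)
Your proposal takes essentially the same route as the paper's proof: apply David's Th\'eor\`eme 2.1 to the linear form $n\phi (P)+2m\phi (T_{0})$ on the short model $\mathcal{E}$, with all of David's parameters ($\log V_{1},\log V_{2},\log B$, the curve height via Inequality (\ref{bound-David-height-naive-height})) dominated by $b_{n}$, and then convert through Inequality (\ref{link-archim-height-log-elliptic}) and \cite[Theorem 5.5]{Silverman-diff-height-can-height}. Only a cosmetic remark: the exponent $6$ comes from the product $\log V_{1}\log V_{2}\left(\log B+\cdots\right)\left(\log\log B+\cdots\right)^{3}$, i.e.\ $2+1+3$, not from ``$2\rho^{2}$'' with $\rho =2$.
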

\begin{proof}
We use notation \ref{notation-minimal-equation-short-Weierstrass-equation}. Applying \cite[Th\'eor\`eme 2.1]{David} to the curve $\mathcal{E}$ with $k =2$ and $D\le 3$ and $E=e$ and $\gamma_{1} = P$ and $\gamma_{2} = T_{0}$ and 
$$\begin{array}{rcl}
\log (V_{1})=\log (V_{2})&=& \max\left\{ 2\widehat{h}(P),12 e h(E) + 5e\log (6)\right\}\\
&\ge &\max\left\{ 2\widehat{h}(P), e\max\left\{ 1,h\left( 1,-\frac{a}{4},-\frac{b}{16}\right) ,h(j(\mathcal{E}))\right\},2\pi\sqrt{3}\right\} \\
&\ge &\max\left\{ 2\widehat{h}(P), \max\left\{ 1,h\left( 1,-\frac{a}{4},-\frac{b}{16}\right) ,h(j(\mathcal{E}))\right\} ,\frac{3\pi |\phi (P)|^{2}}{|2\phi (T_{0})|^{2}\textrm{Im}(\tau )}\right\}\\
\end{array}$$
(where $\tau$ is a complex number such that $\mathcal{E}(\C )\simeq \C /(\Z +\tau\Z )$ and $\textrm{Im}(\tau )\ge\frac{\sqrt{3}}{2}$) and 
$$\begin{array}{rcl}
\log (B)&=&\max\{\log |2n|, \log (V_{1})\} \\
&\ge &\max\left\{ e\max\left\{ 1,h\left( 1,-\frac{a}{4},-\frac{b}{16}\right) ,h(j(\mathcal{E}))\right\}, h(n,2m),\frac{\log(V_{1})}{D}\right\} .\\
\end{array}$$
(note that $|m|\le |n|$) we get an inequality
$$\begin{array}{rl}
\log |n\phi (P) +2m\phi (T_{0})|\ge & -C \log (V_{1}) \log (V_{2})(\log (B) + \log (3) + 1)\times\\
&\left(\log (\log (B)) + 12 h(E) +5\log (6) + \log (3) + 1\right)^{3}\\
\end{array}$$
where $C=2.3\times 10^{43}.$ Note that 
\begin{itemize}
\item we do not use the same definition for $\widehat{h}$ as in \cite{David};
\item the number $h:=\max\left\{ 1,h\left( 1,-\frac{a}{4},-\frac{b}{16}\right) ,h(j(\mathcal{E}))\right\}$ is equal the number denoted by $h$ in \cite{David}; Inequality (\ref{bound-David-height-naive-height}) gives an upper bound on $h$ that is linear in $h(E)$ (see notation \ref{notation-minimal-equation-short-Weierstrass-equation-1}).
\end{itemize}
Using the inequalities $\log (x)\le x-1$ (which holds for every real number $x>0$) and 
$$12h(E) +5\log (6) \le e^{-1}\log (V_{1}),$$
we deduce from Inequality~(\ref{link-archim-height-log-elliptic}) that 
$$h_{\infty}(nP)\le (1+e^{-1})^{3}C\left( 1+ \log (3) \log (B)\right)^{6} + \frac{5}{2}\log (2).$$
We conclude by using \cite[Theorem 5.5]{Silverman-diff-height-can-height}.
\end{proof}

\subsection{A nonuniform bound on the index of prime power terms in elliptic divisibility sequences.}

\begin{sproposition}\label{borne-non-uniforme}
We use notation \ref{notation-P-Pprime-sigma}. Then $B_{nP'}$ has a prime factors coprime to $B_{P'}$ for every index 
$$n>\max\left\{2.1\times 10^{30}, \frac{4.3\times 10^{27}}{\widehat{h}(P')},8.7\times 10^{23}\widehat{h}(P')^{5/2},\frac{2\times 10^{27}h(E')^{7/2}}{\widehat{h}(P')}\right\}$$
and $B_{n\sigma (P')}$ has a prime factor coprime to $B_{nP'}$ for every index 
$$n>\max\left\{ 4.2\times 10^{30}, \frac{4.3\times 10^{27}}{\widehat{h}(P')},1.7\times 10^{24}\widehat{h}(\sigma (P'))^{5/2},\frac{4\times 10^{27}h(E)^{7/2}}{\widehat{h}(\sigma (P'))}\right\} .$$
\end{sproposition}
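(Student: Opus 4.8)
The plan is to combine the general bound from Theorem~\ref{theorem-general-bound-PSD-terms} with the David-type estimate of Lemma~\ref{Application-Sinnou-David}, and then to turn the resulting inequality $n^2\le a(\log n+1)^6+b$ into an explicit bound via Lemma~\ref{solution-bound-nsquare-logn}. The subtlety is that Theorem~\ref{theorem-general-bound-PSD-terms} requires hypotheses of the shape $\widehat{h_\infty}(nQ)\le\epsilon\widehat h(nQ)+M$ with a \emph{fixed} $\epsilon<1$ (and $\epsilon<1-1/d$ for the $\sigma(P')$-statement), whereas Lemma~\ref{Application-Sinnou-David} only gives $\widehat{h_\infty}(nP')\le c_1(b_n+\log 3+1)^6+c_2$, which is far weaker than a fraction of $\widehat h(nP')=n^2\widehat h(P')$ once $n$ is even moderately large. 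So the first thing I would do is \emph{not} invoke Theorem~\ref{theorem-general-bound-PSD-terms} as a black box but rather re-run its proof with $\epsilon=0$ and $M'=c_1(b_n+\log 3+1)^6+c_2$ depending on $n$: that is, I would reproduce Inequality~(\ref{theorem-general-bound-PSD-terms-1st-part-ineq3}), which in the case at hand reads
$$n^2\widehat h(P')\le c_1(b_n+\log3+1)^6+c_2+h(E')+\widehat h(P')+2\log n.$$

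Next I would \textbf{estimate $b_n$}. Recall $b_n=\max\{\log|2n|,\,2\widehat h(P'),\,12eh(E')+5e\log 6\}$. For the range of $n$ we are after, $\log|2n|$ is the dominant term in many subcases but not all, so I would split into three regimes according to which term achieves the maximum. When $b_n=\log|2n|$, the displayed inequality becomes $n^2\widehat h(P')\le c_1(\log(2n)+\log3+1)^6+(\text{lower order})$, i.e.\ an inequality of exactly the form treated by Lemma~\ref{solution-bound-nsquare-logn} with $d=6$, $a=c_1/\widehat h(P')$ (after absorbing constants), and a free parameter $A\ge1$. Choosing $A$ cleverly — roughly $A\asymp 1/\sqrt{\widehat h(P')}$ or a fixed large constant, depending on which of the two branches of Lemma~\ref{solution-bound-nsquare-logn} one wants to dominate — produces the two competing terms $2.1\times10^{30}$ and $4.3\times10^{27}/\widehat h(P')$ in the final bound. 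When instead $b_n=2\widehat h(P')$ the right side is $\asymp c_1\widehat h(P')^6$ (up to the $(\cdots+\log3+1)^6$ reshaping), giving $n^2\lesssim c_1\widehat h(P')^5$, hence the term $8.7\times10^{23}\widehat h(P')^{5/2}$; and when $b_n=12eh(E')+5e\log6$ one gets $n^2\widehat h(P')\lesssim c_1(12eh(E'))^6$, hence the term $2\times10^{27}h(E')^{7/2}/\widehat h(P')$ after taking square roots and bookkeeping the numerical constants (here one also uses the trivial $h(E')\ge\frac{\log 2}{12}$ coming from minimality of the Weierstrass equation to simplify). Taking the maximum over the three regimes yields the first claimed bound.

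For the \textbf{second inequality}, concerning $B_{n\sigma(P')}$ having a prime factor coprime to $B_{nP'}$, I would argue in parallel. If every prime factor of $B_{n\sigma(P')}$ divides $B_{nP'}$ (the $B_{\sigma(P')}$ part being harmless by strong divisibility), then re-running the second half of the proof of Theorem~\ref{theorem-general-bound-PSD-terms} with $\epsilon=0$ and with $M=$ the David bound of Lemma~\ref{Application-Sinnou-David} \emph{applied to the curve $E$ and the point $\sigma(P')=P$} gives an inequality of the form
$$n^2\widehat h(\sigma(P'))\le c_1(b_n'+\log3+1)^6+c_2'+h(E)+\widehat h(\sigma(P'))+2\log n+\log d,$$
where $b_n'=\max\{\log|2n|,2\widehat h(\sigma(P')),12eh(E)+5e\log6\}$ and the extra $\log d$ and the weakened coefficient are exactly the $(d-d\epsilon-1)$-type features in (\ref{theorem-general-bound-PSD-terms-2nd-part-ineq3}) (here, with $\epsilon=0$, the coefficient of $n^2$ is simply $\widehat h(\sigma(P'))$ once one has divided through, because the whole of $\widehat h(nP')$ can be pushed to one side; the point is that the degree no longer hurts since $\epsilon=0$). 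Then the identical three-regime analysis with Lemma~\ref{solution-bound-nsquare-logn} produces $4.2\times10^{30}$, $4.3\times10^{27}/\widehat h(P')$ (using $\widehat h(\sigma(P'))=d\,\widehat h(P')\ge\widehat h(P')$), $1.7\times10^{24}\widehat h(\sigma(P'))^{5/2}$, and $4\times10^{27}h(E)^{7/2}/\widehat h(\sigma(P'))$. The \textbf{main obstacle} I anticipate is purely computational: keeping the enormous numerical constants under control through the repeated application of Lemma~\ref{solution-bound-nsquare-logn} (which replaces $(\log n+1)^d$ by $(2d\log(2d)+2\log A)^d$ with $d=6$, so one is raising $12\log12+2\log A$ to the sixth power and must check that $A$ can be chosen so that the $c_1/A$ coefficient of the linear term stays below what makes $n^2\le (c_1/A)n+b$ solvable with a small root), and in tracking how the factor-of-$2$ losses in (\ref{link-archim-height-log-elliptic}), the $(1+e^{-1})^3$ in Lemma~\ref{Application-Sinnou-David}, and Silverman's comparison \cite{Silverman-diff-height-can-height} accumulate — but none of this is conceptually hard once the regime split is in place.
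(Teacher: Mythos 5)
Your proposal is correct and follows essentially the same route as the paper: combine Lemma \ref{Application-Sinnou-David} with Theorem \ref{theorem-general-bound-PSD-terms} (re-running its proof with $\epsilon=0$ and an $n$-dependent $M'$ exactly as the paper does when $\log|2n|$ dominates $b_{n}$, and using it directly when $b_{n}$ is the constant $\max\{2\widehat{h}(P'),12eh(E')+5e\log 6\}$), then split according to which term of $b_{n}$ achieves the maximum and apply Lemma \ref{solution-bound-nsquare-logn} with suitable choices of $A$; your attribution of each of the four terms in the final bound to its regime matches the paper's computation.
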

\begin{proof}
Let $n\in\N$ be such that $B_{nP'}$ has no prime factor coprime to $B_{P'}$. Lemma \ref{Application-Sinnou-David} (applied with $b':=\max\left\{ 2\widehat{h}(P');12eh(E')+5e\log (6)\right\}$) asserts that either $\widehat{h_{\infty}}(nP')\le 5.9\times 10^{43}\times (b'+2.1)^{6} + h(E') +2.81$ or $\log |2n| > b'$. We assume for now that $\log |2n|\le b'$. Applying Theorem \ref{theorem-general-bound-PSD-terms} we get that
$$\begin{array}{rcl}
n&\le &\frac{2}{\widehat{h}(P')}+\sqrt{\frac{5.9\times 10^{43}(b'+2.1)^{6}+2h(E')+\widehat{h}(P')+2.81}{\widehat{h}(P')}}\\
&\le &\frac{2+\sqrt{5.91\times 10^{43}(b'+2.1)^{7}}}{\widehat{h}(P')}\\
&\le &\frac{8.7\times 10^{21}\left(\max\left\{\widehat{h}(P')+ 1.05, 17h(E') + 14\right\}\right)^{7/2}}{\widehat{h}(P')}\\
&\le &\frac{8.7\times 10^{21}\left(\max\left\{2\widehat{h}(P'), 34h(E'), 28\right\}\right)^{7/2}}{\widehat{h}(P')}.\\
\end{array}$$
Now we assume that $\log |n|\ge b' .$ The proof of Theorem \ref{theorem-general-bound-PSD-terms} is still valid when $M$ and $M'$ are replaced with polynomials in $\log (n)$. In particular Lemma \ref{Application-Sinnou-David} implies that 
$$\begin{array}[t]{rcl}
n^{2}\widehat{h}(P')&\le & 5.9\times 10^{43} (\log |6n| + 1)^{6} + 2\log (n) + \widehat{h}(P') + 2h(E') + 2.81\\
&\le &2\max\left\{ 5.9\times 10^{43} (\log |6n| +1)^{6}, 2\log (n) + \widehat{h}(P') + 2h(E') + 2.81\right\} .\\
\end{array}$$
Applying Lemma \ref{solution-bound-nsquare-logn} 
\begin{itemize}
\item with $A=10^{18}$ and $d=6$ when $n^{2}\widehat{h}(P')\le 11,8\times 10^{43} (\log |6n| +1)^{6}$,
\item with $A=4d=4$ when $n^{2}\widehat{h}(P') \le 4\log (n) + 2\widehat{h}(P') + 4h(E') + 5.62,$
\end{itemize}
we get that either $n\le \max\left\{ 2.06\times 10^{30}, \frac{4.3\times 10^{27}}{\widehat{h}(P')}\right\}$
$$\begin{array}{rrcl}
\textrm{or}&n&\le &\max\left\{ 16.7, \frac{1}{\widehat{h}(P')}+ \sqrt{ 2 + \frac{5.62}{\widehat{h}(P')}+\frac{4h(E')}{\widehat{h}(P')}}\right\}.\\
&&\le&\max\left\{ 16.7, \left( 1+\sqrt{3}\right)\max\left\{\frac{1}{\widehat{h}(P')}, \sqrt{2},\sqrt{\frac{5.62}{\widehat{h}(P')}}, 2\sqrt{\frac{h(E')}{\widehat{h}(P')}}\right\}\right\}\\
\end{array}$$
In the same way we prove that 
$$\begin{array}{rrcl}
\textrm{either}&n&\le &\frac{1.8\times 10^{22}\left(\max\left\{2\widehat{h}(\sigma (P')), 34h(E), 28\right\}\right)^{5/2}}{\widehat{h}(\sigma (P'))}\\
\textrm{or}&n &\le &\max\left\{ 4.2\times 10^{30}, \frac{8.6\times 10^{27}}{\widehat{h}(\sigma (P'))}\right\}\\
\textrm{or}&n&\le &\max\left\{ 16.7, \frac{1}{(\deg (\sigma )-1)\widehat{h}(P')}+ \sqrt{ 4 + \frac{5.62+2\log (\deg (\sigma ))}{(\deg (\sigma )-1)\widehat{h}(P')}+\frac{8h(E)}{\widehat{h}(\sigma (P'))}}\right\}\\
&&\le &\max\left\{ 16.7, \left( 1+\sqrt{3}\right)\max\left\{\frac{2}{\widehat{h}(\sigma (P'))},\sqrt{\frac{7.62}{\widehat{h}(P')}}, \sqrt{\frac{8h(E)}{\widehat{h}(\sigma (P'))}}\right\}\right\}\\
\end{array}$$
whenever $B_{n\sigma (P')}$ has no prime factor coprime to $B_{nP'}$. 
\end{proof}

\subsection{An explicit version of the gap principle.}

David's theorem about lower bounds on linear forms in elliptic logarithm leads to a bound $M(B)$ on the index of prime terms in a magnified elliptic divisibility sequence $B$ that is quite large. As explained for example in \cite{Tzanakis-De-Weger}, the bound $M(B)$ can be reduced applying the LLL algorithm or Mumford's gap principle. 

\begin{snotation}\label{notation-gap-principle}
We use notation \ref{notation-P-Pprime-sigma}. Following notation \ref{notation-minimal-equation-short-Weierstrass-equation} we denote by $\mathcal{E}$ (respectively $\mathcal{E'}$) a model of $E$ (respectively $E'$) given by a short Weierstrass equation with coefficients in $\Z$ such that $\Delta_{\mathcal{E}}=6^{12}\Delta_{E}$ (respectively $\Delta_{\mathcal{E'}}~=~6^{12}\Delta_{E'}$). Let $P'$ be a $\Q$-point on $E'$. We denote by $R'\in\mathcal{E}'(\Q)$ (respectively $R~\in~\mathcal{E}(\Q)$)  the point on $\mathcal{E}'$ (respectively $\mathcal{E}$) associated to $P'$ (respectively $\sigma (P' )$).
\end{snotation}

\begin{slemma}\label{explicit-LLL-phin-nphi} We use notation \ref{notation-gap-principle}. Let
\begin{equation}\label{hypothese-n-Lemme-explicit-LLL-phin-nphi}
n >\max\left\{ 8,\frac{2}{\widehat{h}(P')}+\sqrt{3+\max\left\{\frac{5h(E')}{\widehat{h}(P')}, \frac{9h(E)}{\widehat{h}(P)}\right\} + \frac{7}{\widehat{h}(P')} }\right\} .
\end{equation}
be such that $B_{n\sigma (P')}$ has at most one prime factor coprime to $B_{P'}$.
\begin{enumerate}
\item Assume every prime factor of $B_{nP'}$ divides $B_{P'}$. Then we have 
$|x(nR')|~\ge~2\max\left\{ |x(T)| : T\in\mathcal{E}'[2]\right\}$ and $n\phi_{\mathcal{E}'}(R')\neq \phi_{\mathcal{E}'}(nR')$;
\item[ ]
\item Assume every prime factor of $B_{n\sigma (P')}$ divides $B_{nP'}$. Then we have 
$|x(nR)|~\ge~2\max\left\{ |x(T)| : T\in\mathcal{E}'[2]\right\}$ and $n\phi_{\mathcal{E}}(R)\neq \phi_{\mathcal{E}}(nR)$.
\end{enumerate}
\end{slemma}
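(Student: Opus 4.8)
The plan is to extract both conclusions from one quantitative fact: each of the two prime–factor hypotheses forces the canonical archimedean height of $nP'$ (respectively of $nP=n\sigma(P')$) to grow quadratically in $n$, and the two conclusions are then routine consequences of this growth.

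\textbf{The lower bound on $\widehat{h_\infty}$.} Assume (case (a)) that every prime factor of $B_{nP'}$ divides $B_{P'}$. Running exactly the computation in the proof of Theorem \ref{theorem-general-bound-PSD-terms} — decompose $n^{2}\widehat h(P')=\widehat h(nP')=\widehat{h_\infty}(nP')+\sum_{v\text{ finite}}\widehat{h_v}(nP')$, bound the bad–reduction contribution by $h(E')$ using \cite[Chapter III, Theorem 4.5]{Lang}, and use Lemma \ref{easy-factorization} to bound $\widehat{h_v}(nP')\le\widehat{h_v}(P')+2h_v(n)$ at the places $v\mid B_{nP'}$ — but \emph{without} invoking any Siegel-type input, one obtains in place of Inequality (\ref{theorem-general-bound-PSD-terms-1st-part-ineq3})
$$\widehat{h_\infty}(nP')\ \ge\ n^{2}\widehat h(P')-h(E')-\widehat h(P')-2\log n.$$
For $n$ above the threshold in (\ref{hypothese-n-Lemme-explicit-LLL-phin-nphi}) this already exceeds $3h(E')+\log 6+1.07$, so by Proposition \ref{weakerthan-canonical-height} the point $nR'$ cannot lie in the bounded real component of $\mathcal E'$; hence $nR'\in\mathcal E'(\R)_0$ and $\phi_{\mathcal E'}(nR')$ is defined (we work throughout in the regime where $R'$, hence $nR'$, lies in $\mathcal E'(\R)_0$; the complementary situation is the one handled in Section \ref{section-cinq}).

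\textbf{From $\widehat{h_\infty}(nP')$ to $|x(nR')|$.} Using \cite[Theorem 5.5]{Silverman-diff-height-can-height} together with $h(j(\mathcal E'))=h(j(E'))\le 12h(E')$ one gets $h_\infty(nR')\ge\widehat{h_\infty}(nR')-h(E')-1.07$, and since $h_\infty(nR')=\tfrac12\log\max\{1,|x(nR')|\}$ on the short model $\mathcal E'$ this turns the previous display into a lower bound for $\log|x(nR')|$ quadratic in $n$. On the other hand the $2$‑torsion $x$‑coordinates of $\mathcal E'$ are the roots of its defining cubic, which — by the Cardan‑formula estimate already carried out inside the proof of Proposition \ref{weakerthan-canonical-height} — are at most $e^{4h(E')+2\log 6}/3^{1/6}$; so $\log\bigl(2\max\{|x(T)|:T\in\mathcal E'[2]\}\bigr)$ is bounded by a fixed multiple of $h(E')$, independent of $n$. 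Comparing the two estimates, $|x(nR')|\ge 2\max\{|x(T)|\}$ holds once $n^{2}\widehat h(P')$ dominates a fixed affine combination of $h(E')$, $\widehat h(P')$, $\log n$ and $1$; solving this via $\log n<n$ (i.e. Lemma \ref{solution-bound-nsquare-logn} with $d=1$, $A=1$) yields precisely a bound of the shape $\tfrac{2}{\widehat h(P')}+\sqrt{3+5h(E')/\widehat h(P')+7/\widehat h(P')}$, which is what hypothesis (\ref{hypothese-n-Lemme-explicit-LLL-phin-nphi}) supplies.

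\textbf{The non‑vanishing of the linear form.} By Notation \ref{notation-minimal-equation-short-Weierstrass-equation} one may write $\phi_{\mathcal E'}(nR')=n\phi_{\mathcal E'}(R')+2m\,\phi_{\mathcal E'}(T_0)$ with $m\in\Z$, and $n\phi_{\mathcal E'}(R')=\phi_{\mathcal E'}(nR')$ is equivalent to $m=0$, i.e. to $|\phi_{\mathcal E'}(nR')|=n\,|\phi_{\mathcal E'}(R')|$. Feeding this into Inequality (\ref{link-archim-height-log-elliptic}) at both $R'$ and $nR'$ gives $h_\infty(nR')\le h_\infty(R')-\log n+3\log 2$; and $h_\infty(R')\le\widehat{h_\infty}(R')+h(E')+1.07\le\widehat h(P')+O(h(E'))$, again by \cite[Theorem 5.5]{Silverman-diff-height-can-height} and by the lower bound on finite local heights from \cite[Chapter III, Theorem 4.5]{Lang}. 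This contradicts the quadratic‑in‑$n$ lower bound on $h_\infty(nR')$ obtained above for $n$ past the threshold; hence $m\neq 0$. Part (b) is the verbatim translation with $(P',E',\mathcal E',R')$ replaced by $(P,E,\mathcal E,R)$: using $\widehat h(P)=d\,\widehat h(P')$ and the second‑part argument of the proof of Theorem \ref{theorem-general-bound-PSD-terms} (Lemma \ref{easy-factorization} applied through the dual isogeny) one gets $\widehat{h_\infty}(nP)\ge (d-1)n^{2}\widehat h(P')-h(E)-\log d\ge n^{2}\widehat h(P')-h(E)-\log d$, and the $\log d$ term is harmless since $\log d/(\deg\sigma-1)\le\log 2$; this is why the same numerical threshold, now with the $9h(E)/\widehat h(P)$ variant of the middle term, suffices.

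I expect the only real obstacle to be the constant‑chasing: keeping the two passages $\widehat{h_\infty}\leftrightarrow h_\infty$ and $h_\infty\leftrightarrow\log|x|$ honest across both models $\mathcal E,\mathcal E'$ and across the bad‑reduction places, so that the accumulated additive constants genuinely fit under the stated threshold (the explicit ``$8$'' and the coefficients $3,5,7,9$ leave little slack). Conceptually nothing beyond the estimates already established in the excerpt is needed.
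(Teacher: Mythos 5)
Your argument is essentially the paper's own proof, merely run in the direct rather than the contrapositive direction: the paper assumes $|x(nR')|\le 2\max\{|x(T)|:T\in\mathcal{E}'[2]\}$ (respectively $\phi_{\mathcal{E}'}(nR')=n\phi_{\mathcal{E}'}(R')$), bounds $\widehat{h_{\infty}}(nP')$ via Proposition \ref{weakerthan-canonical-height} (respectively via Inequality (\ref{link-archim-height-log-elliptic}) together with Silverman's explicit height comparisons), and then applies Theorem \ref{theorem-general-bound-PSD-terms} to force $n$ below the threshold (\ref{hypothese-n-Lemme-explicit-LLL-phin-nphi}), which is exactly the chain of estimates (Lemma \ref{easy-factorization}, the local-height decomposition, the Cardan bound on the $2$-torsion abscissas, Lemma \ref{solution-bound-nsquare-logn}) that you use. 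The only part you leave open is the explicit constant-chasing needed to match the stated numerical threshold, which the paper carries out; the structure and inputs are otherwise identical.
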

\begin{proof} 
When $|x (nR')|\le 2\max\left\{ |x(T)| : T\in\mathcal{E}'[2]\right\} ,$ Lemma \ref{weakerthan-canonical-height} gives
$$\widehat{h_{\infty}}(nP')= \widehat{h_{\infty}}(nR')\le 3h(E') + \log (6) + \frac{1}{2}\log (2) + 1.07\le 3h(E') + 3.21$$
Thus Theorem \ref{theorem-general-bound-PSD-terms} implies that,  if every prime factor of $B_{nP'}$ divides $B_{P'}$ and $n> \frac{2}{\widehat{h}(P')} + \sqrt{1 +\frac{4 h(E') +3.21}{\widehat{h}(P')} }$, then $|x (nR')|~\ge~2\max\left\{ |x(T)| : T\in\mathcal{E}'[2]\right\}$ (which implies that $R'\in\mathcal{E}'(\R )_{0}$).

In the same way, since $n> \frac{2}{\widehat{h}(P')} + \sqrt{2 +\frac{8 h(E)}{\widehat{h}(P)} + \frac{4.21}{\widehat{h}(P')} }$, we deduce from Theorem \ref{theorem-general-bound-PSD-terms} that, if every prime factor of $B_{n\sigma (P')}$ divides $B_{nP'}$, then we have $|x (nR)|\ge 2\max\left\{ |x(T)| : T\in\mathcal{E}[2]\right\}$ (and in particular $R\in\mathcal{E}(\R )_{0}$).

Assume that $|x (nR')|\ge 2\max\left\{ |x(T)| : T\in\mathcal{E'}[2]\right\}$ and $n\phi_{\mathcal{E}'}(R')\neq \phi_{\mathcal{E}'}(nR')$ Then Inequality (\ref{link-archim-height-log-elliptic}) gives 
$$\begin{array}{rcl}
h_{\infty}(nR') - \frac{5}{2}\log (2)&\le &- \log\left|\phi_{\mathcal{E}'}(nR')\right|\\
&\le &-\log (n) -\log\left|\phi_{\mathcal{E}'}(R')\right|\\
&\le &-\log (n) + h_{\infty}(R') + \frac{1}{2}\log\left( 2\right) .\\
\end{array}$$
Now \cite[Theorem 1.1]{Silverman-diff-height-can-height} asserts that 
$$\begin{array}{rcl}
h_{\infty}(R')\le h(R')&\le &\widehat{h}(R') + h(\mathcal{E}') + \frac{3}{24}h(j(\mathcal{E}')) + 0.973\\
&\le &\widehat{h}(R') + \frac{5}{2}h(E') +\log (6) + 0.973.\\
\end{array}$$
Applying \cite[Theorem 5.5]{Silverman-diff-height-can-height} to $\widehat{h_{\infty}}(nP') = \widehat{h_{\infty}}(nR')$ we get 
\begin{equation}\label{explicit-LLL-phin-nphi-last-equation}
\widehat{h_{\infty}}(nP') + \log (n)\le\widehat{h}(P') + \frac{7}{2}h(E') + 2\log (6) + 3\log (2) + 2.043.
\end{equation}
If every prime factor of $B_{nP'}$ divides $B_{P'}$ and $n\phi_{\mathcal{E}'}(R')\neq \phi_{\mathcal{E}'}(nR')$ and $3\log (2)\le \log (n)$ then it follows from Inequation (\ref{explicit-LLL-phin-nphi-last-equation}) and Theorem \ref{theorem-general-bound-PSD-terms} that $n\le \frac{2}{\widehat{h}(P')} + \sqrt{2 + \frac{5h(E') + 6}{\widehat{h}(P')}}.$ The proof for Inequality  (\ref{explicit-LLL-phin-nphi-last-equation}) holds also when replacing $E'$, $P'$ and $R'$ respectively by $E,$ $P$ and $R$. It follows that if every prime factor of $B_{n\sigma (P')}$ divides $B_{nP'}$ and $n\phi_{\mathcal{E}}(R)\neq \phi_{\mathcal{E}}(nR)$ and $3\log (2)\le \log (n)$ then $n\le \frac{2}{\widehat{h}(P')} + \sqrt{3 + \frac{9h(E')}{\widehat{h}(P)} + \frac{7}{\widehat{h}(P')}}.$
\end{proof}

\begin{sproposition}\label{proposition-finle-gap-principle}
We use notation \ref{notation-P-Pprime-sigma} and  we assume that $E$ and $E'$ are given by minimal Weierstrass equations. Let $n_{3}>n_{2}>n_{1}>8$ be three pairwise coprime integers with
\begin{equation}\label{condition-borne-min-gap}
n_{3}>n_{2}>n_{1}>\frac{2}{\widehat{h}(P')}+\sqrt{3+\max\left\{\frac{5h(E')}{\widehat{h}(P')}, \frac{9h(E)}{\widehat{h}(P)}\right\} + \frac{7}{\widehat{h}(P')} }
\end{equation}
such that $B_{n_{i}P}$ has at most one prime factor coprime to $B_{P'}$. Then we have 
$$\begin{array}{cc}
\textrm{either }&n_{1}\le\frac{2}{\widehat{h}(P')} + \sqrt{2+ \frac{2\log (n_{3}) + 52h(E)}{\widehat{h}(\sigma (P'))} +\frac{24.42}{\widehat{h}(P')}}\\
\textrm{or}&n_{1}\le\frac{2}{\widehat{h}(P')} + \sqrt{1+\frac{\log (n_{i}) + 26h(E') + 23.42}{\widehat{h}(P')}}\\
\end{array}$$
with $i\in\{2,3\}$ an index such that every prime factor of $B_{n_{i}P'}$ divides $B_{P'}.$
\end{sproposition}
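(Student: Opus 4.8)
The plan is to run an explicit form of Mumford's gap principle. Each hypothesis ``$B_{n_i\sigma(P')}$ has at most one prime factor coprime to $B_{P'}$'' is fed into Lemma \ref{explicit-LLL-phin-nphi}, which (since each $n_i$ exceeds the bound in \eqref{condition-borne-min-gap}) produces a relation between the elliptic logarithm of a multiple of $R'$ or $R$ and the elliptic logarithm of $R'$ or $R$; two such relations, for coprime indices, then fight each other through a Liouville-type inequality.

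First I would record a dichotomy for each index. Since $B_{n_iP'}\mid B_{n_i\sigma(P')}$ and $B_{P'}\mid B_{n_iP'}$ by the strong divisibility property \eqref{Strong-Divisibility-Property}, and since Lemma \ref{easy-factorization} gives $v(B_{n_iP'})\le v(B_{n_i\sigma(P')})$, the hypothesis forces that either (a) every prime factor of $B_{n_iP'}$ divides $B_{P'}$, or (b) every prime factor of $B_{n_i\sigma(P')}$ divides $B_{n_iP'}$. In case (a), Lemma \ref{explicit-LLL-phin-nphi}(1) puts $n_iR'$ in $\mathcal{E}'(\R)_0$ and gives a nonzero integer $m_i'$ with $\phi_{\mathcal{E}'}(n_iR')=n_i\phi_{\mathcal{E}'}(R')+2m_i'\phi_{\mathcal{E}'}(T_0')$ and $|m_i'|\le n_i$; case (b) gives the analogous relation on $\mathcal{E}$. (As $n_1,n_2,n_3$ are pairwise coprime at least two are odd, and the $x$-coordinate bound of Lemma \ref{explicit-LLL-phin-nphi} at an odd index already forces $R'$, resp. $R$, into the unbounded real component, so every elliptic logarithm written below is defined.) I would also extract the quantitative integrality content: decomposing $\widehat{h}$ into local heights exactly as in the proof of Theorem \ref{theorem-general-bound-PSD-terms} (Lemma \ref{easy-factorization} at the primes of $B_{P'}$, \cite[Chapter III, Theorem 4.5]{Lang} at the bad primes), case (a) yields $\widehat{h_\infty}(n_iP')\ge n_i^2\widehat{h}(P')-\widehat{h}(P')-2\log n_i-h(E')$, and case (b) yields $\widehat{h_\infty}(n_i\sigma(P'))\ge n_i^2(d-1)\widehat{h}(P')-h(E)-\log d$ (using $\widehat{h}(\sigma(P'))=d\,\widehat{h}(P')$ and $d\ge 2$). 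Via Inequality \eqref{link-archim-height-log-elliptic} and \cite[Theorem 5.5]{Silverman-diff-height-can-height} these translate into upper bounds for $|\phi_{\mathcal{E}'}(n_iR')|$, resp. $|\phi_{\mathcal{E}}(n_iR)|$, that are exponentially small in $n_i^2\widehat{h}(P')$; one also needs the crude estimate $-\log|\phi_{\mathcal{E}'}(T_0')|=O(h(E'))$, which follows from $|x(T_0')|\le e^{O(h(E'))}$ (as in the proof of Proposition \ref{weakerthan-canonical-height}) and \eqref{link-archim-height-log-elliptic} once more.

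The gap step itself: by the pigeonhole principle two of $n_1<n_2<n_3$, say $n_j<n_k$ with $k\in\{2,3\}$, fall into the same case. Eliminating $\phi_{\mathcal{E}'}(R')$ between the two relations gives $n_k\phi_{\mathcal{E}'}(n_jR')-n_j\phi_{\mathcal{E}'}(n_kR')=2(n_km_j'-n_jm_k')\phi_{\mathcal{E}'}(T_0')$, and likewise on $\mathcal{E}$ in case (b). Because $\gcd(n_j,n_k)=1$, because $m_j',m_k'$ are nonzero with $|m_i'|\le n_i$, and because $|\phi_{\mathcal{E}'}(n_kR')|<|\phi_{\mathcal{E}'}(T_0')|$ (if $n_km_j'=n_jm_k'$ one is forced into $m_k'=\pm n_k$, whence $|\phi_{\mathcal{E}'}(n_kR')|=n_k|\phi_{\mathcal{E}'}(R')\pm 2\phi_{\mathcal{E}'}(T_0')|>n_k|\phi_{\mathcal{E}'}(T_0')|$, absurd), the integer $n_km_j'-n_jm_k'$ is nonzero, so $2|\phi_{\mathcal{E}'}(T_0')|\le n_k|\phi_{\mathcal{E}'}(n_jR')|+n_j|\phi_{\mathcal{E}'}(n_kR')|$. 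Feeding in the exponential bounds and taking logarithms gives, for whichever of the two indices realises the larger elliptic logarithm, an inequality of the shape $n_\ell^2\widehat{h}(P')\le\widehat{h}(P')+2\log n_\ell+\log n_k+O(h(E'))$ in case (a), or $n_\ell^2(d-1)\widehat{h}(P')\le\log n_k+\log d+O(h(E))$ in case (b).

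To finish, apply Lemma \ref{solution-bound-nsquare-logn} with $A=1$, treating $\log n_\ell$ as the variable. Since $n_1\le n_\ell$ and $n_k\le n_3$, and since $d\ge 2$ lets one replace $\tfrac{1}{(d-1)\widehat{h}(P')}$ by $\tfrac{2}{\widehat{h}(\sigma(P'))}$ and a leading $\tfrac{4}{\widehat{h}(\sigma(P'))}$ by $\tfrac{2}{\widehat{h}(P')}$, while $\tfrac{\log d}{d-1}$ is an absolute constant, the case-(a) pair yields the second displayed alternative with $i=k$ (for which case (a) holds, $k\in\{2,3\}$) and the case-(b) pair yields the first, with $n_3$ in place of $n_k$. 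The genuinely hard part is not conceptual but numerical: one must carry the constants $26$, $23.42$, $52$, $24.42$ correctly through the tower of comparisons between $h_\infty$, $\widehat{h_\infty}$, the elliptic logarithm, and $h(E')$, $h(\mathcal{E}')$, $h(j(\mathcal{E}'))$, and keep the $d$-dependence uniform so that nothing blows up when dividing by $(d-1)\widehat{h}(P')$; the arithmetic heart is merely the inequality $|n_km_j'-n_jm_k'|\ge 1$.
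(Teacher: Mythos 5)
Your proposal is essentially the paper's own proof: the pigeonhole dichotomy over the three pairwise coprime indices, the relations $\phi_{\mathcal{E}'}(n_iR')=n_i\phi_{\mathcal{E}'}(R')+2m_i\phi_{\mathcal{E}'}(T_0)$ supplied by Lemma \ref{explicit-LLL-phin-nphi}, the nonvanishing of $n_jm_i-n_im_j$ from coprimality, the resulting bound $2\left|\phi_{\mathcal{E}'}(T_0)\right|\le 2\max\left\{ n_j\left|\phi_{\mathcal{E}'}(n_iR')\right|, n_i\left|\phi_{\mathcal{E}'}(n_jR')\right|\right\}$, and the return trip through (\ref{link-archim-height-log-elliptic}), \cite[Theorem 5.5]{Silverman-diff-height-can-height} and Theorem \ref{theorem-general-bound-PSD-terms}. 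The only real deviation is how the half-period is bounded below — the paper invokes \cite[Lemme 2.1]{Pellarin} to get the explicit $24h(E')+22.35$, while you apply (\ref{link-archim-height-log-elliptic}) at $T_0$ together with the root estimate from Proposition \ref{weakerthan-canonical-height} — and this, plus the constant bookkeeping you explicitly defer (your case-(b) height inequality drops the $\widehat{h}(\sigma(P'))$ and $2\log n_i$ terms, which the full computation must keep), is a matter of detail rather than of approach.
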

\begin{proof} We use notation \ref{notation-gap-principle}. For every $l\in\{ 1,2,3\}$ at most one prime factor of $B_{n_{l}\sigma (P')}$ does not divide $B_{P'}$. In particular two indices $i\neq j$ are such that
\begin{itemize}
\item either every prime factor of $B_{n_{i}P'}$ divides $B_{P'}$ and every prime factor of $B_{n_{j}P'}$ divides $B_{P'}$; 
\item or every prime factor of $B_{n_{i}\sigma (P')}$ divides $B_{n_{i}P'}$ and every prime factor of $B_{n_{j}\sigma (P')}$ divides $B_{n_{j}P'}$. 
\end{itemize}
We assume for now that every prime factor of $B_{n_{i}P'}$ divides $B_{P'}$ and every prime factor of $B_{n_{j}P'}$ divides $B_{P'}$. Lemma \ref{explicit-LLL-phin-nphi} asserts that
\begin{itemize}
\item $\left|x(n_{i}P')\right| \ge 2\max\left\{ |x(T)|~:~T\in\mathcal{E}'[2]\right\}$ and $\phi_{\mathcal{E}'}(n_{i}P') \neq n_{i}\phi_{\mathcal{E}'}(P')$;
\item $\left|x(n_{j}P')\right| \ge 2\max\left\{ |x(T)|~:~T\in\mathcal{E}'[2]\right\}$ and $\phi_{\mathcal{E}'}(n_{j}P') \neq n_{j}\phi_{\mathcal{E}'}(P')$.
\end{itemize}
We denote by $m_{i}\neq 0$ and $m_{j}\neq 0$ two integers such that 
$$\begin{array}{cc}
&\phi_{\mathcal{E}'} (n_{i}P')=n_{i}\phi_{\mathcal{E}'} (P') + 2m_{i}\phi_{\mathcal{E}'} (T_{0})\\
\textrm{and}&\phi_{\mathcal{E}'} (n_{j}P')=n_{j}\phi_{\mathcal{E}'} (P') + 2m_{j}\phi_{\mathcal{E}'} (T_{0}).\\
\end{array}$$
Since $\left| n_{i}\phi_{\mathcal{E}'} (P') + 2m_{i}\phi_{\mathcal{E}'} (T_{0})\right| \le \left|\phi_{\mathcal{E}'} (T_{0})\right|$ and $|\phi_{\mathcal{E}'} (P')|\le\left|\phi_{\mathcal{E}'} (T_{0})\right|$ we have $|m_{i}|<|n_{i}|$. However if $n_{i}m_{j}=n_{j}m_{i}$ then $n_{i}$ is a divisor of $m_{i}$ (because $n_{i}$ and $n_{j}$ are coprime). It follows that $n_{j}m_{i}-n_{i}m_{j}\neq 0.$ In particular we get 
\begin{equation}\label{inegalite-fondamentale-gap-principle}
\begin{array}{rcl}
2\left|\phi_{\mathcal{E}'} (T_{0})\right| &\le &2\left|\phi_{\mathcal{E}'} (T_{0})\right|\left| n_{j}m_{i}-n_{i}m_{j}\right|\\
&\le &\left|n_{j}\phi_{\mathcal{E}'} (n_{i}P') - n_{i}\phi_{\mathcal{E}'} (n_{j}P') \right|\\
&\le &2\max\left\{ |n_{j}|\left|\phi_{\mathcal{E}'} (n_{i}P')\right| ,|n_{i}|\left|\phi_{\mathcal{E}'} (n_{j}P')\right|\right\}\\
\end{array}
\end{equation}
We deduce from Inequality (\ref{link-archim-height-log-elliptic}) and Inequality (\ref{inegalite-fondamentale-gap-principle}) that 
$$\min\left\{ h_{\infty}(n_{j}P') - \log (n_{i}),h_{\infty}(n_{i}P')-\log (n_{j})\right\}\le  - \log\left|\phi_{\mathcal{E}'}\left( T_{0}\right)\right| + \frac{5}{2}\log (2).$$
Applying \cite[Lemme 2.1]{Pellarin} (and Inequality (\ref{bound-David-height-naive-height})) we get
\begin{equation}\label{inegalite-fondamentale-gap-principle-2}
\min\left\{ h_{\infty}(n_{j}P') - \log (n_{i}), h_{\infty}(n_{i}P')-\log (n_{j})\right\}\le 24h(E') + 22.35.
\end{equation}
Theorem \ref{theorem-general-bound-PSD-terms} and \cite[Theorem 5.5]{Silverman-diff-height-can-height}  show that
$$n_{1}\le\min\{ n_{i},n_{j}\}\le\frac{2}{\widehat{h}(P')} + \sqrt{1+\frac{\log\left(\max\left\{ n_{i},n_{j}\right\}\right) + 26h(E') + 23.42}{\widehat{h}(P')}}$$
Now we assume that every prime factor of $B_{n_{i}\sigma (P')}$ divides $B_{n_{i}P'}$ and every prime factor of $B_{n_{j}\sigma (P')}$ divides $B_{n_{j}P'}.$ An analog argument show that
$$\min\left\{ h_{\infty}(n_{j}\sigma (P')) - \log (n_{i}), h_{\infty}(n_{i}\sigma (P'))-\log (n_{j})\right\}\le 24h(E) + 22.35.$$ 
From this inequality and Theorem \ref{theorem-general-bound-PSD-terms} and \cite[Theorem 5.5]{Silverman-diff-height-can-height}  we deduce that
$$n_{1}\le\frac{2}{\widehat{h}(P')} + \sqrt{2+\frac{2\log (n_{3}) +  52h(E)}{\widehat{h}(\sigma (P'))}+\frac{23.42}{\widehat{h}(P')} + \frac{\log (d)}{(d-1)\widehat{h}(P')}}$$
(note that $n_{1}\le\min\{ n_{i},n_{j}\}$ and $\max\{ n_{i},n_{j}\}\le n_{3}$).
\end{proof}

\subsection{The proof of Theorem \ref{Bound-compositecase-N1-N3}}

The inequality $h(E')\ge \frac{1}{12}\log (2)$ implies that $\frac{2}{\widehat{h}(\sigma (P'))}\le \frac{1}{\widehat{h}(P')}\le\frac{12 C}{\log (2)}\le 17.32\times C$ with $C:=\max\left\{1, \frac{h(E')}{\widehat{h}(P')},\frac{h(E)}{\widehat{h}(\sigma (P'))}\right\}.$

Let $n$ be an integer such that  at most one prime factor $B_{n\sigma (P')}$ is not a prime factor of $B_{P'}$. If $n = n_{1}n_{2}$ with $n_{1} \ge n_{2}>1$, then Corollary \ref{application-cas-doublement-magnifie} implies that either $n\le n_{1}^{2}\le \frac{4}{\widehat{h}(P')}\left(\frac{1}{2}\log\left(\frac{4}{\widehat{h}(P')}\right)\right)^{2}\le 18C\left(\log (70 C)\right)^{2}$ or 
$$n\le n_{1}^{2}\le\left(\frac{144}{\sqrt{\widehat{h}(P')}}+2\sqrt{1+\frac{128h(E')+135}{\widehat{h}(P')}}\right)^{2}\le 490000C$$
Proposition \ref{borne-non-uniforme} asserts that 
$$N_{1}\le \max\left\{ 4.2\times 10^{30}C, 1.7\times 10^{24}\widehat{h}(\sigma (P'))^{5/2}, 4\times 10^{27}C^{7/2}\widehat{h}(\sigma (P'))^{5/2}\right\} .$$
In particular (since $h\ge \log (h)$ for every $h\ge 1$) we have 
\begin{equation}\label{borne-maximal-prime-term}
\frac{\log (N_{1})}{\widehat{h}(\sigma (P'))}\le 600 C + 31C\log (C) +\frac{5}{2}
\end{equation}
Noticing that $\frac{2}{\widehat{h}(P')}+\sqrt{3+\max\left\{\frac{5h(E')}{\widehat{h}(P')}, \frac{9h(E)}{\widehat{h}(P)}\right\} + \frac{7}{\widehat{h}(P')}}\le 47C,$ we deduce from Proposition \ref{proposition-finle-gap-principle} and Inequality (\ref{borne-maximal-prime-term}) that either $N_{3}\le 47C$
\begin{eqnarray}
\textrm{or}&N_{3}\le\frac{2}{\widehat{h}(P')} + \sqrt{2+ \frac{2\log (N_{1}) + 52h(E)}{\widehat{h}(\sigma (P'))} +\frac{24.42}{\widehat{h}(P')}}\le 77 C\\
\textrm{or}&N_{3}\le\frac{2}{\widehat{h}(P')} + \sqrt{1+\frac{\log (N_{i}) + 26h(E') + 23.42}{\widehat{h}(P')}}\label{borne-finale-N3-ter}
\end{eqnarray}
where $i\in \{ 1,2\}$ is such that every prime factor of $B_{N_{i}P'}$ divides $B_{P'}.$ When Inequality (\ref{borne-finale-N3-ter}) holds Proposition \ref{borne-non-uniforme} gives $\frac{\log (N_{i})}{\widehat{h}(P')}\le 1202 C + 62C\log (C).$
In that case  Inequality (\ref{borne-finale-N3-ter}) implies that $N_{3}\le 77 C.$ We conclude applying the main result in \cite{hindrysilverman}.

\section{Elliptic curves with $j$-invariant 1728.}\label{section-six}

In this section we compte the bound from Corollary~\ref{cas-composante-bornee} in the particular case of an elliptic curve $E_{A}$ defined by a Weierstrass equation
$$E_{A}:y^{2}=x(x^{2}-A)$$
where $A$ denotes a positive integer with no valuation greater than or equal to 4. For congruent number curves such values can be deduced easily from results on integer points on $E_{N^{2}}$. In the case $A\notin\Q^{\times 2}$, the main issue is to get an explicit version of Lang's conjecture (which is known to be true for elliptic curves with an integral $j$-invariant). 

\begin{proposition}\label{proposition-controle-hauteur-canonique-deg2}
Let $P\in E_{A}(\Q )$ be a nontorsion point lying on the unbounded connected component of $E_{A}(\R )$. Denote by $\widehat{h}_{A}$ the canonical height for $E_{A}$. Then 
\begin{equation}\label{explicit-Lang-P-Anot12mod16-deg2}
\widehat{h}_{A}(P)\ge\frac{1}{16}\log |2A|.
\end{equation}
when $A\not\equiv 12\bmod 16$ and 
\begin{equation}\label{explicit-Lang-P-A12mod16-deg2}
\widehat{h}_{A}(P)\ge\frac{1}{64}\log |2A|.
\end{equation}
when $A\equiv 12\bmod 16.$ Moreover we have:
\begin{equation}\label{explicit-difference-hauteur-hauteur-canonique-deg2}
-\frac{1}{4}\log |A|-\frac{3}{8}\log (2)\le\widehat{h}_{A}(P) -\frac{1}{4}\log |A_{P}^{2}+AB_{P}^{4}|\le\frac{1}{12}\log (2)
\end{equation}
\end{proposition}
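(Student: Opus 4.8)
The plan is to treat the curve $E_A : y^2 = x(x^2-A)$ as a short Weierstrass equation $y^2 = x^3 + Ax + B$ with $A$ replaced by $-A$ (in the notation of the earlier sections) and $B = 0$, and to run the local height machinery. The discriminant of $E_A$ is $\Delta = 64A^3$ up to sign, and since all valuations of $A$ are $\le 3$, this Weierstrass equation is already minimal away from $2$; at $2$ one checks via Tate's algorithm whether a change of model is available, which is exactly where the two cases $A \equiv 12 \bmod 16$ and $A \not\equiv 12 \bmod 16$ separate (in the bad case the model at $2$ is non-minimal, costing a factor $2^6$ in the discriminant and hence worsening the height comparison by a factor $4$, which accounts for the $\frac{1}{16}$ versus $\frac{1}{64}$). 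So the first step is a careful local analysis at $2$ to pin down the minimal discriminant $\Delta_{E_A}$ in each congruence class.

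Next I would prove the height-difference estimate (\ref{explicit-difference-hauteur-hauteur-canonique-deg2}). Write $[n]P$ in the usual form so that $x(P) = A_P/B_P^2$; then the naive local height at a finite place $v$ is $h_v(P) = \frac{1}{2}\max\{0,-v(x(P))\} = v(B_P)$, and at the archimedean place $h_\infty(P) = \frac12\log\max\{1,|x(P)|\}$. Summing, $h(P) = \frac12\log\max\{|A_P|,|B_P^2|\} = \frac{1}{2}\log|B_P^2| + h_\infty(P)$ when $P$ is on the unbounded component (so $|x(P)|$ is bounded below, and in fact $A_P^2 + AB_P^4 = B_P^2 y(P)^2 \cdot B_P^2$ is a square times the relevant quantity). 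The quantity $\frac14\log|A_P^2 + AB_P^4|$ is precisely $\frac14\log|A_P^2 - (-A)B_P^4|$, which is a symmetric function one can relate to $h(P)$ via the identity $A_P^2 + AB_P^4 = (C_P/B_P)^2 \cdot B_P^6 / B_P^4 = C_P^2/B_P^2$ coming from the Weierstrass equation $C_P^2 = A_P(A_P^2 - AB_P^4) / B_P^2 \cdot B_P^2$; more directly, $y([1]P)^2 = x(x^2-A)$ gives $C_P^2 = A_P(A_P^2 - AB_P^4)$, and hence $\frac14\log|A_P^2+AB_P^4|$ differs from $h(P)$ by a controlled archimedean and $2$-adic error. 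The inequality \cite[Chapter III, Theorem 4.5]{Lang} comparing naive and canonical local heights (with the $\frac{1}{24}\min(0,v(j))$ and $\frac{1}{12}v(\Delta)$ bounds) then converts $h(P)$ into $\widehat{h}_A(P)$, with the $\frac{1}{12}\log 2$ and $-\frac14\log|A| - \frac38\log 2$ error terms emerging from summing $\frac{1}{12}v(\Delta_{E_A})$ over the bad places and the $j$-invariant being $1728$ (so $v(j) \ge 0$ everywhere, killing the lower-bound defect).

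Finally, to get (\ref{explicit-Lang-P-Anot12mod16-deg2}) and (\ref{explicit-Lang-P-A12mod16-deg2}) I would combine the height-difference estimate with a lower bound on the naive quantity $\frac14\log|A_P^2 + AB_P^4|$. The point is that $P$ has infinite order and lies on the unbounded component, so $x(P) \ge \sqrt{A}$ (the largest real root of $x^2 - A$ being $\sqrt A$, and the unbounded component being $x \ge \sqrt A$ when... actually $x \ge 0$ with the nontrivial real root structure), forcing $|A_P| \ge \sqrt{A}\,|B_P^2|$ and in particular $|A_P^2 + AB_P^4| \ge A|B_P^4| \ge A$, whence $\frac14\log|A_P^2+AB_P^4| \ge \frac14\log|A|$. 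Feeding this into (\ref{explicit-difference-hauteur-hauteur-canonique-deg2}) yields $\widehat{h}_A(P) \ge \frac14\log|A| - \frac14\log|A| - \frac38\log 2$, which is too weak; so instead I would use the sharper observation that $B_P \ge 1$ together with $x(P)$ being a genuine rational (not an integer in general) to extract the extra $\frac12\log 2$ needed, or more robustly, run the descent-via-$2$-isogeny argument: $E_A$ admits the $2$-isogeny with kernel $\{O, (0,0)\}$, and comparing $\widehat{h}_A(P)$ with $\widehat{h}(P)$ on the isogenous curve $y^2 = x(x^2 + 4A)$ gives the factor $\frac{1}{16}$ directly from $\deg = 2$ and the conductor/discriminant bookkeeping. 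The main obstacle is the bookkeeping at $2$: getting the constant $\frac{1}{16}$ (resp.\ $\frac{1}{64}$) exactly right requires knowing $v_2(\Delta_{E_A})$ on the nose in each residue class mod $16$, and making sure the archimedean contribution is correctly absorbed into the $\frac{1}{12}\log 2$ and $-\frac38\log 2$ slack rather than into the main term. Everything else is the now-standard decomposition $\widehat{h} = \sum_v \widehat{h}_v$ plus the local comparison inequalities already quoted in the excerpt.
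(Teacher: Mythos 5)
There is a genuine gap, and it centers on where the constants $\frac{1}{16}$ and $\frac{1}{64}$ come from. You attribute the dichotomy $A\equiv 12\bmod 16$ versus $A\not\equiv 12\bmod 16$ to the model $y^{2}=x(x^{2}-A)$ being non-minimal at $2$ in the bad case, "costing a factor $2^{6}$ in the discriminant." That is not the mechanism (and the model is in fact still minimal there: e.g. $y^{2}=x^{3}+4x$ is a minimal equation with $v_{2}(\Delta )=12$ and Kodaira type $I_{3}^{*}$). The paper's argument runs differently: one computes the local canonical heights directly -- the archimedean one by Tate's series, giving $0\le\widehat{h}_{\infty}(P)-\frac{1}{4}\log |x(P)^{2}+A|+\frac{1}{12}\log\Delta_{A}\le\frac{1}{12}\log (2)$, and the non-archimedean ones via Silverman's algorithm after reading off the reduction types from Tate's algorithm ($I_{0}$, $III$, $I_{0}^{*}$, $III^{*}$ at odd $v$ according to $\ord_{v}(A)$, and $II$, $III$, $I_{2}^{*}$, $I_{3}^{*}$, $III^{*}$ at $2$ according to $A\bmod 16$). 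The point of this bookkeeping is to identify which multiple of $P$ lands in the subgroup of points with good reduction everywhere: $2P$ always works at odd places, and at $2$ it works exactly when $A\not\equiv 12\bmod 16$; when $A\equiv 12\bmod 16$ (type $I_{3}^{*}$, cyclic component group of order $4$) one must pass to $4P$. For a point $Q$ with good reduction everywhere one has the exact formula $\sum_{v\neq\infty}\widehat{h_{v}}(Q)=\log |B_{Q}|+\frac{1}{4}\log |4A|$, which combined with the archimedean estimate gives $\widehat{h}_{A}(Q)\ge\frac{1}{4}\log |A_{Q}^{2}+AB_{Q}^{4}|\ge\frac{1}{4}\log |2A|$ on the unbounded component; the stated bounds then follow from quadraticity, $\widehat{h}_{A}(P)=\frac{1}{4}\widehat{h}_{A}(2P)$ or $\frac{1}{16}\widehat{h}_{A}(4P)$. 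This passage to $2P$ or $4P$ is the key idea your proposal is missing.

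Your own fallback computation illustrates the problem: applying the difference estimate (\ref{explicit-difference-hauteur-hauteur-canonique-deg2}) directly to $P$ loses a term $\frac{1}{4}\log |A|+\frac{3}{8}\log (2)$, a deficit that grows with $A$, so it cannot be repaired by "extracting an extra $\frac{1}{2}\log 2$," and the suggested $2$-isogeny descent to $y^{2}=x(x^{2}+4A)$ does not produce the constants either. Two further inaccuracies: the identity $A_{P}^{2}+AB_{P}^{4}=B_{P}^{2}y(P)^{2}B_{P}^{2}$ is false (the correct relations are $C_{P}^{2}=A_{P}(A_{P}^{2}-AB_{P}^{4})$, while $A_{P}^{2}+AB_{P}^{4}$ is the quantity appearing squared in the numerator of $x(2P)$); and invoking only the general comparison of naive and canonical local heights from Lang would not yield the sharp error terms $\frac{1}{12}\log (2)$ and $-\frac{1}{4}\log |A|-\frac{3}{8}\log (2)$ -- the proof needs the explicit local computations (in particular the $2$-adic estimate with defect $\frac{3}{8}\log (2)$ and the odd-$v$ estimate with defect $\frac{v(A)}{4}$) rather than the generic bounds.
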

\begin{proof}The proposition is similar to \cite[Proposition 2.1]{bremsil} so we do not give a full proof here. However more reduction types have to be considered leading to a more complicated proof. The proof is based on the decomposition of the canonical height into a sum of local canonical heights. 

Denote by $\Delta_{A}=64A^{3}$ the discriminant of $E_{A}$. The contribution of the archimedean height is computed using Tate's series as in \cite{bremsil}. We get 
\begin{equation}\label{calcul-deg2-hauteur-archim}
0\le\widehat{h}_{\infty}(P)-\frac{1}{4}\log |x(P)^{2}+ A| + \frac{1}{12}\log (\Delta_{A})\le \frac{1}{12}\log (2).
\end{equation}
Non-archimedean canonical heights are computed using the algorithm presented in \cite{Silv-compute-can-height}. If $v$ is an odd prime number, then Tate's algorithm can be used to prove that $E_{A}$ has reduction type:
\begin{itemize}
\item $I_{0}$ at $v$ when $\ord_{v}(A)=0$;
\item $III$ at $v$ when $\ord_{v}(A)=1$;
\item $I_{0}^{*}$ at $v$ when $\ord_{v}(A)=2$;
\item $III^{*}$ at $v$ when $\ord_{v}(A)=3$;
\end{itemize}
In particular $2P$ has always good reduction at $v$ and we get 
\begin{equation}\label{calcul-deg2-hauteur-p-adic}
\begin{array}{rcl}
-\frac{v(A)}{4}\le \widehat{h_{v}}(P)-\frac{1}{2}\max\{ 0,-v(x(P))\} -\frac{v(\Delta_{A})}{12}\le 0\\
\end{array}
\end{equation}
(the only technical issue is the case $\ord_{v}(A) = 2\ord_{v}(x(P))=2$; in that case the equation for $E_{A}$ implies that $\ord_{v}(x(P)^{2}-A)\equiv\ord_{v}(x(P))\bmod 2$ and it follows that $\ord_{v}(x(P)^{2}+A)= \ord_{v}(2A)=2$).

Considering the specialization of $E_{A}$ at $2$, Tate's Algorithm gives a reduction type:
\begin{itemize}
\item $II$ for $E_{A}$ at $2$ when $A\equiv -1\bmod 4$;
\item $III$ for $E_{A}$ at $2$ when $A\equiv 1\bmod 4$;
\item $III$ for $E_{A}$ at $2$ when $\ord_{2}(A)=1$;
\item $I_{2}^{*}$ for $E_{A}$ at $2$ when $A\equiv 4\bmod 16$;
\item $I_{3}^{*}$ for $E_{A}$ at $2$ when $A\equiv 12\bmod 16$;
\item $III^{*}$ for $E_{A}$ at $2$ when $\ord_{2}(A)=3$;
\end{itemize}
In particular every double in $E_{A}(\Q )$ has good reduction everywhere if and only if $A\not\equiv~12\bmod 16$. When $A\equiv 12\bmod 16$, every $\Q$-point on $E_{A}$ in the image of the multiplication-by-4 map has good reduction everywhere. Moreover the algorithm described in \cite{Silv-compute-can-height} gives 
\begin{equation}\label{calcul-deg2-hauteur-2-adic}
-\frac{v_{2}(A)}{4}-\frac{3}{8}\log (2)\le\widehat{h_{2}}(P)-\frac{1}{2}\max\{ 0,-v_{2}(x(P))\} -\frac{v_{2}(\Delta_{A})}{12}\le 0
\end{equation}
We compute the canonical height by summing local canonical heights. Doing so Inequality (\ref{explicit-difference-hauteur-hauteur-canonique-deg2}) becomes a consequence of inequations (\ref{calcul-deg2-hauteur-archim}), (\ref{calcul-deg2-hauteur-p-adic}) and~(\ref{calcul-deg2-hauteur-2-adic}).\\

Now we prove the two inequalities (\ref{explicit-Lang-P-Anot12mod16-deg2}) and (\ref{explicit-Lang-P-A12mod16-deg2}). When  $Q\in E_{A}(\Q )$ has good reduction everywherewe we have
$$\displaystyle\sum_{v\ne\infty}\widehat{h_{v}}(Q) = \log |B_{Q}| +\frac{1}{4}\log |4A|.$$
Adding this equation and the inequation (\ref{calcul-deg2-hauteur-archim}) we get 
\begin{equation}\label{Lang-EA-firstbound-proof}
\widehat{h}_{A}(Q)\ge\frac{1}{4}\log |A_{Q}^{2}+AB_{Q}^{4}|
\end{equation}
If $Q$ is a point in the unbounded real connected component of $E_{A}$, then $|A_{Q}| = |x(Q)| B_{Q}^{2}\ge\sqrt{|A|}B_{Q}^{2}\ge\sqrt{|A|}$. Inequality (\ref{Lang-EA-firstbound-proof}) becomes
\begin{equation}\label{explicit-Lang-Q-deg2}
\widehat{h}_{A}(Q)\ge\frac{1}{4}\log |2A|.
\end{equation}
Let $P$ be any $\Q$-point on $E_{A}$. As shown above $2P$ has good reduction everywhere whenever $A\not\equiv 12\bmod 16$ and $4P$ has good reduction everywhere. The two inequalities (\ref{explicit-Lang-P-Anot12mod16-deg2}) and (\ref{explicit-Lang-P-A12mod16-deg2}) follow from Inequation (\ref{explicit-Lang-Q-deg2}) applied with $Q\in\{ 2P,4P\}$.
\end{proof}

\begin{proposition}\label{PC-cas-de-[2]-en-degre-2}
Let $P$ be a $\Q$-point on $E_{A}$. Then $B_{2nP}$ is composite in the two following cases:
\begin{itemize}
\item when $n\ge 10$ and $A\not\equiv 12\bmod 16$;
\item when $k\ge 5$ and $A\equiv 12\bmod 16$;
\end{itemize}
\end{proposition}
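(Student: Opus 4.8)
The plan is to produce two distinct prime divisors of $B_{2nP}$ (in the second case, of $B_{4kP}$, with $n=2k$), which \emph{a fortiori} shows that term composite, directly from the arithmetic of the doubling map on $E_A$ fed by the explicit height data of Proposition~\ref{proposition-controle-hauteur-canonique-deg2}; as there, one takes $P$ nontorsion. Put $Q=nP$ in the first case and $Q=2kP$ in the second, so that $2Q$ equals $2nP$, respectively $4kP$, and lies in the image of multiplication by $2$, respectively by $4$. By the reduction–type table computed inside the proof of Proposition~\ref{proposition-controle-hauteur-canonique-deg2}, under exactly the congruence hypothesis made in each case $2Q$ has good reduction at every finite place, whence $\sum_{v\ne\infty}\widehat{h_{v}}(2Q)=\log|B_{2Q}|+\frac{1}{4}\log|4A|$; thus $B_{2Q}$ is the object to analyse.

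\textbf{From the doubling formula to a factorisation.} Writing $Q=(A_Q/B_Q^{2},C_Q/B_Q^{3})$ with $\gcd(A_QB_Q,C_Q)=1$ and using the equation of $E_A$ in the form $C_Q^{2}=A_Q(A_Q^{2}-AB_Q^{4})$, the duplication formula reads
\[
x(2Q)=\frac{(A_Q^{2}+AB_Q^{4})^{2}}{4C_Q^{2}B_Q^{2}}.
\]
Since $\gcd(A_Q^{2}+AB_Q^{4},B_Q)=1$, and $\gcd(A_Q^{2}+AB_Q^{4},C_Q)$ is supported on the primes dividing $2A$ (use $\gcd(A_Q^{2}+AB_Q^{4},A_Q)=\gcd(A,A_Q)$ and $\gcd(A_Q^{2}+AB_Q^{4},A_Q^{2}-AB_Q^{4})\mid 2\gcd(A_Q^{2},A)$), the integers $B_{2Q}^{2}$ and $(2C_QB_Q)^{2}$ differ only by a factor supported on the primes of $2A$. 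Hence $v_p(B_{2Q})=v_p(B_Q)+v_p(C_Q)$ for every $p\nmid 2A$, with at most one summand nonzero by coprimality, and it suffices to exhibit a prime divisor of $B_Q$ not dividing $2A$ and a prime divisor of $C_Q$ not dividing $2A$: these are automatically distinct and both divide $B_{2Q}$.

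\textbf{Producing the two prime divisors.} Both non‑vanishing statements are forced by $Q$ being large. Combining $\widehat{h}_{A}(Q)=m^{2}\widehat{h}_{A}(P)$ (with $m\in\{n,2k\}$), the explicit lower bound (\ref{explicit-Lang-P-Anot12mod16-deg2}) (resp.\ (\ref{explicit-Lang-P-A12mod16-deg2})) and the difference estimate (\ref{explicit-difference-hauteur-hauteur-canonique-deg2}), together with the fact that $h(E_A)$ is linearly bounded by $\widehat{h}_{A}(P)$, one sees from $|C_Q|^{2}=|A_Q|\,|A_Q^{2}-AB_Q^{4}|$ (with $A_Q^{2}-AB_Q^{4}\ne 0$) that $\log|C_Q|$ is large once $m$ passes a small absolute constant; a $2A$‑smooth $C_Q$ would make $A_Q(A_Q^{2}-AB_Q^{4})=\square$ an anomalously deep integral point, which the descent identity (up to the bounded divisor $\gcd(A,A_Q)$ of $A$, both $A_Q$ and $A_Q^{2}-AB_Q^{4}$ are perfect squares) excludes in the claimed range. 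For $B_Q$ one splits on the real component of $Q$: on the bounded component $|x(Q)|\le\sqrt{|A|}$, so $|A_Q|\le\sqrt{|A|}\,B_Q^{2}$ and $\log|B_Q|\ge\widehat{h}_{A}(Q)-\frac14\log|2A|-O(1)$ is large positive; on the unbounded component one rules out $B_Q$ being $2A$‑smooth past a small index by a primitive–divisor argument (in the style of Silverman's theorem on primitive divisors of elliptic divisibility sequences, whose explicit form for the CM curve $E_A$ is governed by (\ref{explicit-Lang-P-Anot12mod16-deg2})–(\ref{explicit-Lang-P-A12mod16-deg2})), using that past all bad primes any primitive divisor of $B_{mP}$ is a good prime, hence prime to $2A$. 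Running the three sub‑cases (the two real components for the $B_Q$‑factor, and the $C_Q$‑factor) through both congruence classes and taking the worst threshold gives $n\ge 10$ and $k\ge 5$.

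\textbf{The main obstacle.} The strategy above is essentially mechanical; the difficulty is entirely numerical. One must bound explicitly, purely in terms of $\widehat{h}_{A}(P)$ and $\log|A|$, how deep a $2A$‑smooth solution of the descent equation can lie and how large a $2A$‑smooth term of $(B_{mP})_m$ can occur; control the bounded but nonzero contribution of the primes dividing $2A$, which can cancel a prime factor of $B_{2Q}$; and check that every constant collapses to the small integers $10$ and $5$ rather than to a merely effective bound. It is the interplay of the sharp, descent‑driven form of Lang's inequality on $E_A$ with the elementary factorisation above — rather than any transcendence input — that makes this possible.
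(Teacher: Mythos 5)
Your reduction via the duplication formula is the same starting point as the paper's: $x(2Q)=\frac{(A_Q^{2}+AB_Q^{4})^{2}}{4B_Q^{2}A_Q(A_Q^{2}-AB_Q^{4})}$ and the observation that cancellation against the numerator is supported on primes dividing $2A$ are both correct. The gap is in the step you call ``Producing the two prime divisors''. You reduce the proposition to showing that $C_Q$ has a prime factor not dividing $2A$ (and, on the unbounded component, that $B_Q$ does too), i.e.\ that these quantities are not $2A$-smooth. The height inequalities (\ref{explicit-Lang-P-Anot12mod16-deg2})--(\ref{explicit-difference-hauteur-hauteur-canonique-deg2}) only make $|C_Q|$ and $\widehat{h}_{A}(Q)$ large; largeness does not exclude $2A$-smoothness, and your appeals to ``descent excludes anomalously deep integral points in the claimed range'' and to a Silverman-style primitive-divisor statement are precisely effective Siegel-type results that are neither proved here nor deducible from Proposition \ref{proposition-controle-hauteur-canonique-deg2}; no computation leading to the explicit thresholds $10$ and $5$ is ever carried out (your final paragraph concedes this). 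Worse, on the unbounded component $B_Q$ can perfectly well equal $1$ (an integral multiple), in which case ``a prime divisor of $B_Q$'' does not exist, so that sub-case of your scheme fails outright unless you import an explicit primitive-divisor theorem with very small index bounds.

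The paper's proof needs none of this. It only requires two \emph{coprime} pieces of the denominator $4B_{kP}^{2}A_{kP}(A_{kP}^{2}-AB_{kP}^{4})$ whose sizes beat the bounded cancellation: the gcd of $A_{kP}$ (resp.\ of $A_{kP}^{2}-AB_{kP}^{4}$) with the numerator divides $A^{2}$ (resp.\ $4A^{2}$). Hence it suffices to have, say, $B_{kP}>1$ together with $AB_{kP}^{4}-A_{kP}^{2}>4A^{2}$ when $x(kP)<0$, or $|A_{kP}|>A^{3}$ together with $A_{kP}^{2}-AB_{kP}^{4}>4A^{2}$ when $x(kP)>0$; the surviving parts are $>1$ and coprime, so $B_{2kP}$ is composite, and their prime factors are allowed to divide $2A$. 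These are pure size statements which follow from (\ref{explicit-Lang-P-Anot12mod16-deg2}), (\ref{explicit-Lang-P-A12mod16-deg2}) and (\ref{explicit-difference-hauteur-hauteur-canonique-deg2}) by short explicit computations in each congruence class, and that is what produces the stated thresholds. To repair your argument, replace ``prime factor of $C_Q$ not dividing $2A$'' by ``the part of $A_Q(A_Q^{2}-AB_Q^{4})$ surviving cancellation exceeds $1$'' and verify the resulting inequalities numerically; as written, the proof is incomplete at its central step.
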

\begin{proof}To simplify the proof we assume (without loss of generality) that $B_{nP}>0$. Since $\gcd (A_{kP},B_{kP})=1$ the equality 
$$x(2kP)=\frac{(A_{kP}^{2}+AB_{kP}^{4})^{2}}{4B_{kP}^{2}A_{kP}(A_{kP}^{2}-AB_{kP}^{4})}$$
shows that $B_{2kP}$ is composite in the three following cases:
\begin{itemize}
\item when $B_{kP}>1$ and $|A_{kP}|>A^{2}$;
\item when $B_{kP}>1$ and $AB_{kP}^{4}-A_{kP}^{2}>4A^{2}$;
\item 
when $|A_{kP}|>A^{3}$ and $A_{kP}^{2}-AB_{k}^{4}>4A^{2}$.
\end{itemize}
(note that $4A^{2}\ge\gcd\left( AB_{kP}^{4}-A_{kP}^{2}, (A_{kP}^{2}+AB_{kP}^{4})^{2}\right)$).
We assume without loss of generality that we are not in the first case i.e. that either $B_{kP}=1$ or $|A_{kP}|\le A^{2}$. We show the second case happens whenever $x(kP)<0$ and the third case happens whenever $x(kP)>0$.\\

\noindent\textbf{Case 1: if $x(kP)<0$.} Then $|x(kP)|<\sqrt{|A|}$ which implies that 
$$\log |A_{kP}^{2}+AB_{kP}^{4}|\le \log (2AB_{kP}^{4}).$$
Now Inequation (\ref{explicit-difference-hauteur-hauteur-canonique-deg2}) gives:
$$k^{2}\widehat{h}_{A}(P)\le \frac{1}{4}\log (2AB_{kP}^{4}) + \frac{1}{12}\log (2).$$
Using Inequations (\ref{explicit-Lang-P-Anot12mod16-deg2}) and (\ref{explicit-Lang-P-A12mod16-deg2}) we get 
\begin{equation}
\frac{k^{2}}{16}\log(2A)\le \frac{1}{4}\log (2AB_{kP}^{4}) + \frac{1}{12}\log (2)
\end{equation}
when $A\not\equiv 12\bmod 16$ and 
\begin{equation}
\frac{k^{2}}{64}\log(2A)\le \frac{1}{4}\log (2AB_{kP}^{4}) + \frac{1}{12}\log (2)
\end{equation}
when $A\equiv 12\bmod 16$. In particular: 
\begin{itemize}
\item the inequality $B_{kP}>1$ holds for $k\ge 3$ when $A\not\equiv 12\bmod 16$, and for $k\ge 5$ when $A\equiv 12\bmod 16$;\\
\item the inequality $AB_{kP}^{4}>5A^{2}$ holds for $k\ge 4$ when $A\not\equiv 12\bmod 16$, and for $k\ge 8$ when $A\equiv 12\bmod 16$.
\end{itemize}
Note that if $|B_{kP}|>1$ then (by assumption) $|A_{kP}|\le A^{2}$. It follows that the inequality 
$$AB_{k}^{4}-A_{kP}^{2}\ge AB_{kP}^{4}-A^{4}>4A^{2}$$ 
holds when $|B_{kP}|>1$ and $AB_{kP}^{4}>5A^{4}\ge4A^{2}+A^{4}$. \\

\noindent\textbf{Case 2: if $x(kP)>0$.} Then $|x(kP)|>\sqrt{|A|}$ which implies that 
$$\log |A_{kP}^{2}+AB_{kP}^{4}|\le 2\log |2A_{kP}| - \log (2).$$
Now Inequation (\ref{explicit-difference-hauteur-hauteur-canonique-deg2}) gives:
$$k^{2}\widehat{h}_{A}(P)\le \frac{1}{2}\log |2A_{kP}| - \frac{1}{6}\log (2).$$
Using Inequations (\ref{explicit-Lang-P-Anot12mod16-deg2}) and (\ref{explicit-Lang-P-A12mod16-deg2}) we get 
\begin{equation}
\frac{k^{2}}{16}\log(2A)\le \frac{1}{2}\log |2A_{kP}| - \frac{1}{6}\log (2)
\end{equation}
when $A\not\equiv 12\bmod 16$ and 
\begin{equation}
\frac{k^{2}}{64}\log(2A)\le \frac{1}{2}\log |2A_{kP}| - \frac{1}{6}\log (2)
\end{equation}
when $A\equiv 12\bmod 16$. In particular: 
\begin{itemize}
\item the inequality $|A_{kP}|>A^{3}$ holds for  $k\ge 5$ if $A\not\equiv 12\bmod 16$, and for $k\ge 10$ if $A\equiv 12\bmod 16$;\\
\item The inequality $A_{kP}^{2}>5A^{2}$ holds for  $k\ge 3$ if $A\not\equiv 12\bmod 16$, and for $k\ge 6$ if $A\equiv 12\bmod 16$.
\end{itemize}
Suppose $|A_{kP}|~>~A^{3}$. Then $|A_{kP}|~>~A^{2}$ and it follows that $B_{kP}=1$. In particular the inequality 
$$A_{kP}^{2}-AB_{k}^{4}\ge A_{kP}^{2}-A^{2}>4A^{2}$$ 
holds when $A_{kP}^{2}>5A^{2}\ge 4A^{2}+A$ and $|A_{kP}|~>~A^{3}$. 
\end{proof}

\begin{proposition} Let $m$ be an odd integer. Let $P'$ be a $\Q$-point on $E_{A}$. Denote by $P$ the multiple $mP'$. Assume $P\in E_{A}(\Q )$ is a point on the bounded component of $E_{A}$. Then $B_{nP}$ is composite:
\begin{itemize}
\item when $n\ge 4$ and $A\not\equiv 12\bmod 16$;
\item when  $n\ge 8$ and $A\equiv 12\bmod 16$.
\end{itemize}
\end{proposition}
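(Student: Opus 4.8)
The plan is to split on the parity of $n$ and, in each case, to upgrade an argument already present in the paper by feeding in the explicit estimates of Proposition~\ref{proposition-controle-hauteur-canonique-deg2}. Throughout I use that, $m$ being odd, multiplication by $m$ preserves the two components of $E_A(\R)$, so $P'$ (like $P$) lies on the bounded component; that $P=[m]P'$ is magnified, so $m\ge 3$ and $\widehat h_A(jR)=j^2m^2\widehat h_A(P')$ for $R=P'$; and that $P'$ has infinite order, as $(B_{nP})$ requires. I will freely use the Lang‑type bound $\widehat h_A(Q)\ge\frac1{16}\log|2A|$ (resp.\ $\frac1{64}\log|2A|$ if $A\equiv 12\bmod 16$), the comparison~(\ref{explicit-difference-hauteur-hauteur-canonique-deg2}) between $\widehat h_A(Q)$ and $\frac14\log|A_Q^2+AB_Q^4|$, the archimedean estimate~(\ref{calcul-deg2-hauteur-archim}), and the fact that $j(E_A)=1728$ is constant and $\Delta_A=64A^3$, so $h(E_A)$ is bounded by a small multiple of $\widehat h_A(P')$ up to an explicit additive constant.

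For $n$ even, write $n=2k$ with $k\ge 2$, so $nP=[2](kP)$. First I would re‑run the proof of Proposition~\ref{PC-cas-de-[2]-en-degre-2} verbatim but with the point being doubled taken to be $kP$ instead of $P$: the duplication formula $x(2kP)=(A_{kP}^2+AB_{kP}^4)^2\big/\bigl(4B_{kP}^2A_{kP}(A_{kP}^2-AB_{kP}^4)\bigr)$ exhibits $B_{nP}$, up to a common factor dividing $4A^2$, as a product of one factor divisible by $B_{kP}$ and another divisible by a large divisor of $A_{kP}$ or of $A_{kP}^2-AB_{kP}^4$; the size inequalities forcing both factors to exceed $1$ hold once $\widehat h_A(kP)$ exceeds the explicit threshold $\kappa_0\log|2A|$ ($\kappa_0\le 25/16$) appearing in that proof. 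Since here $\widehat h_A(kP)=(km)^2\widehat h_A(P')\ge (km)^2\cdot\frac1{16}\log|2A|$ (resp.\ $\frac1{64}\log|2A|$) and $m\ge 3$, this holds as soon as $km\ge 5$ (resp.\ $km\ge 10$), i.e.\ for $k\ge 2$ (resp.\ $k\ge 4$); this gives $B_{nP}$ composite for $n\ge 4$ (resp.\ $n\ge 8$).

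For $n$ odd the three points $nP=[m](nP')$, $nP'$ and $P'$ all lie on the bounded component, where $|x(\cdot)^2+A|\le 2A$, so by~(\ref{calcul-deg2-hauteur-archim}) each satisfies a negative explicit bound $\widehat h_\infty(\cdot)\le c_A<0$ (and $\widehat h_\infty(\cdot)\ge -\tfrac12\log 2$). Using $B_{nP'}\mid B_{nP}$ (Lemma~\ref{easy-factorization}), I would then run the argument of Theorem~\ref{theorem-general-bound-PSD-terms} (in the form of Proposition~\ref{Siegel-fort-cas-magnifie}) with $\epsilon=0$ and $M=M'=c_A$: if every prime factor of $B_{nP}$ divides $B_{nP'}$, then decomposing $\widehat h(nP)=\widehat h_\infty(nP)+\sum_v\widehat h_v(nP)$ and bounding $\sum_{v(B_{nP'})>0}\widehat h_v(nP)\le \sum_{v(B_{nP'})>0}\widehat h_v(nP')+\log(m^2)$ via Lemma~\ref{easy-factorization} yields
$$ n^2(m^2-1)\,\widehat h_A(P')=\widehat h(nP)-\widehat h(nP')\le h(E_A)+2\log m+1. $$
The essential point is that, the magnifying isogeny $[m]$ having fixed degree, there is no $\log n$ term on the right, so with $\widehat h_A(P')\ge\frac1{16}\log|2A|$ and $h(E_A)\le 4\widehat h_A(P')+1$ (resp.\ $\le 16\widehat h_A(P')+1$) this forces $n\le 4$ (resp.\ $n\le 8$) — impossible once $n\ge 5$ (resp.\ $n\ge 9$). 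There remains the possibility that $B_{nP}$ has exactly one prime factor $\ell$ coprime to $B_{P'}$: if $\ell\mid B_{nP'}$ this reduces to the displayed inequality, while if $\ell\nmid B_{nP'}$ then $nP'$ is an $S(P')$‑integer point, so $B_{nP'}\le B_{P'}n^2$ by Lemma~\ref{easy-factorization}, contradicting $\log B_{nP'}\ge\widehat h_A(nP')-\frac14\log|2A|-\frac1{12}\log 2$ (from~(\ref{explicit-difference-hauteur-hauteur-canonique-deg2}) on the bounded component) together with $\log B_{P'}\le\widehat h_A(P')+\frac38\log 2$, once $n\ge 5$.

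Finally one must separate "$B_{nP}$ is an honest prime power $\ell^{a}$" from "$B_{nP}$ is a prime power relative to $B_{P'}$"; here $\ell\mid B_{nP'}$, and I would use the division‑polynomial factorization $B_{nP'}^{m^2}\psi_m(nP')=B_{nP'}\,\Psi_m$, with $\Psi_m=\widetilde\psi_m(A_{nP'},B_{nP'}^2)\in\Z$ and $\gcd(B_{nP'},\Psi_m)\mid m$ (valid since $m$ is odd), together with the fact (Theorem~\ref{theoremA-Ayad}, as in the proof of Theorem~\ref{Introduction-Theorem-Thue}) that $B_{nP'}\Psi_m/B_{nP}$ divides $m^2\Delta_A^{r}$: this forces $\Psi_m$ to be $(2Am)$‑smooth with $|\Psi_m|\le e^{\frac32 m^2 h(E_A)}$, contradicting $|\Psi_m|=B_{nP'}^{m^2-1}|\psi_m(nP')|\ge B_{nP'}^{m^2-1}A^{2}$ (the bound $|\psi_m(nP')|\ge A^2$ being elementary on the bounded component for $m=3$, with an analogue in general), since $B_{nP'}$ is large. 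The step I expect to be the main obstacle is precisely this last one — controlling the $(2Am)$‑part of $\Psi_m$, equivalently bounding the exponent $r$ in terms of the reduction data of $nP'$, and verifying that the finitely many small values of $A$ for which $\log|2A|$ lies below the explicit constant where the size estimates become vacuous cause no trouble (for instance because $E_A$ then has rank $0$ or because the low multiples can be inspected directly).
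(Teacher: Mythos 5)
Your treatment of the even case and the core of the odd case is the paper's own: for $n=2k$ you rerun Proposition \ref{PC-cas-de-[2]-en-degre-2} at the multiple $(km)P'$ of $P'$, and for odd $n$ your displayed inequality is exactly the paper's estimate, obtained from ``every prime factor of $B_{nP}$ divides $B_{nP'}$ $\Rightarrow$ $B_{nP}\mid m^{2}B_{nP'}$'' (Lemma \ref{easy-factorization}, i.e. \cite[Lemma 2.3]{eims}) combined with the bounded-component bounds (\ref{calcul-deg2-hauteur-archim}), (\ref{explicit-difference-hauteur-hauteur-canonique-deg2}) and the Lang-type bounds (\ref{explicit-Lang-P-Anot12mod16-deg2})--(\ref{explicit-Lang-P-A12mod16-deg2}); the crucial absence of a $\log n$ term is the same observation as in the paper. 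That much is sound.

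The gap is in the remaining branch of the odd case, and it stems from aiming at the wrong statement. The proposition asserts compositeness, whose negation is ``$B_{nP}=1$ or $B_{nP}=\ell$ prime''; so if some prime factor $\ell$ of $B_{nP}$ does not divide $B_{nP'}$, then $B_{nP'}\mid B_{nP}=\ell$ forces $B_{nP'}=1$, and the paper disposes of this at once: on the bounded component $n^{2}\widehat{h}_{A}(P')\le\frac14\log(2A)+\frac1{12}\log 2$, which with (\ref{explicit-Lang-P-Anot12mod16-deg2})/(\ref{explicit-Lang-P-A12mod16-deg2}) gives $B_{nP'}>1$ for $n\ge 3$ (resp. $n\ge 6$), uniformly in $A$. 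You instead retain only the weaker consequence that all primes of $B_{nP'}$ divide $B_{P'}$, deduce $B_{nP'}\le n^{2}B_{P'}$, and claim a contradiction ``once $n\ge 5$''. That inequality is $(n^{2}-1)\widehat{h}_{A}(P')\le\frac14\log(2A)+2\log n+O(1)$, and with the only available lower bound $\widehat{h}_{A}(P')\ge\frac1{16}\log(2A)$ it is \emph{not} contradictory at $n=5$ for small $A$ (roughly $A\le 8$; worse with the constant $\frac1{64}$ when $A\equiv 12\bmod 16$). Your proposed rescue for small $A$ fails: $E_{5}$ has rank one with generator $(-1,2)$ on the bounded component, so the rank-$0$ heuristic is false, and since $n$, $m$ and $P'$ are unrestricted there is no finite list of ``low multiples'' to inspect. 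Finally, the step you single out as the main obstacle is not needed at all: a prime power $\ell^{a}$ with $a\ge 2$ is already composite, so nothing has to be ``separated'', and the division-polynomial/smoothness argument sketched there (control of the exponent $r$, the lower bound on $|\psi_{m}(nP')|$ for general odd $m$) is unsubstantiated. Replacing that whole branch by the observation $B_{nP'}=1$ and the direct bounded-component estimate closes the proof along the paper's lines.
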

\begin{proof} 
When $n$ is even, Proposition \ref{PC-cas-de-[2]-en-degre-2} applied to $P'$ shows $B_{nP}=B_{nmP'}$ is composite:
\begin{itemize}
\item when $n\ge\frac{10}{m}$ and $A\not\equiv 12\bmod 16$;
\item when $n\ge\frac{20}{m}$ and $A\equiv 12\bmod 16$.
\end{itemize}
From now on we assume that $n$ is odd. In that case $nP$ lies on the
bounded component of the curve. As in the proof of Proposition \ref{PC-cas-de-[2]-en-degre-2} this implies that
\begin{equation}\label{Siegel-effectif-[m]-deg-2-pour-Pprime}
n^{2}\widehat{h}_{A}(P')\le \log (B_{nP'}) + \frac{1}{4}\log (2A) + \frac{1}{12}\log (2)\textrm{ and}
\end{equation}
\begin{equation}\label{Siegel-effectif-[m]-deg-2-pour-P}
m^{2}n^{2}\widehat{h}_{A}(P')\le \log (B_{nP}) + \frac{1}{4}\log (2A) + \frac{1}{12}\log (2).
\end{equation}
Equation (\ref{Siegel-effectif-[m]-deg-2-pour-Pprime}) shows that the inequality $B_{nP'}>1$ holds for $n\ge 3$ when $A\not\equiv 12\bmod 16$ and for $n\ge 6$ when $A\equiv 12\bmod 16$.

From now on we assume that each prime factor of $B_{nP}$ divides $B_{nP'}$. Then \cite[Lemma 2.3]{eims} implies that $B_{nP}$ divides $m^{2}B_{nP'}$. As a consequence Equation (\ref{Siegel-effectif-[m]-deg-2-pour-P}) gives
$$m^{2}n^{2}\widehat{h}_{A}(P')\le 2\log (m) + \frac{1}{4}\log (B_{nP'}^{4}) + \frac{1}{4}\log (A) + \frac{1}{3}\log (2).$$
Using the first inequality in Inequations (\ref{explicit-difference-hauteur-hauteur-canonique-deg2}) we get 
$$\begin{array}{rcl}
m^{2}n^{2}\widehat{h}_{A}(P')&\le &\frac{1}{4}\log |A_{nP'}^{2}+AB_{nP'}^{4}| + 2\log (m) + \frac{1}{3}\log (2)\\
&\le & n^{2}\widehat{h}_{A}(P') + \frac{1}{4}\log |A| + 2\log (m) + \frac{17}{24}\log (2)\\
\end{array}$$
Now it follows from Inequations (\ref{explicit-Lang-P-Anot12mod16-deg2}) and (\ref{explicit-Lang-P-A12mod16-deg2}) that 
$$\frac{(m^{2}-1)n^{2}}{16}\log |2A|\le\frac{1}{4}\log |2A| + 2\log (m) + \frac{11}{24}\log (2)$$
when $A\not\equiv 12\bmod 16$ and 
$$\frac{(m^{2}-1)n^{2}}{64}\log |2A|\le\frac{1}{4}\log |2A| + 2\log (m) + \frac{11}{24}\log (2)$$
when $A\equiv 12\bmod 16$. Since $m\ge 3$ those inequations imply $n<4$ when $A\not\equiv 12\bmod 16$, and $n<8$ when $A\equiv 12\bmod 16$.
\end{proof}

\end{document}